\newtheorem{theorem}{Theorem}[section]
\newtheorem{lemma}[theorem]{Lemma}
\theoremstyle{definition}
\newtheorem{definition}[theorem]{Definition}
\def \dom{\operatorname{dom}}
\def\dotminussym#1#2{%
  \setbox0=\hbox{$\m@th#1-$}%
  \kern.5\wd0%
  \hbox to 0pt{\hss\hbox{$\m@th#1-$}\hss}%
  \raise.6\ht0\hbox to 0pt{\hss$\m@th#1.$\hss}%
  \kern.5\wd0}
\mathchardef\mhyphen="2D
\begin{document}

\title{Epsilon Substitution for $ID_1$ via Cut-Elimination}
\author{Henry Towsner}
\date{\today}
\address {Department of Mathematics, University of Pennsylvania, 209 South 33rd Street, Philadelphia, PA 19104-6395, USA}
\email{htowsner@math.upenn.edu}
\urladdr{\url{http://www.math.upenn.edu/~htowsner}}
  \dedicatory{in memory of Grigori Mints}

\begin{abstract}
The $\epsilon$-substitution method is a technique for giving consistency proofs for theories of arithmetic.  We use this technique to give a proof of the consistency of the impredicative theory $ID_1$ using a variant of the cut-elimination formalism introduced by Mints.
\end{abstract}

\maketitle

\section{Introduction}

The $\epsilon$-substitution method was introduced by Hilbert (see \cite{HilbertBernays1970}) in an early attempt to prove the consistency of arithmetic.  The usual quantified formulas $\exists x\ \phi(x,\vec t)$ are replaced by Skolemized formulas $\phi(c_{\exists x\,\phi}(\vec t),\vec t)$ (called $\epsilon$-terms because they have often been written with a quantifier $\phi(\epsilon x\,\phi(x,\vec t),\vec t)$).  An $\epsilon$-substitution is simply a selection of values for these Skolem functions.  By replacing each term with its selected value, one transforms a proof into a sequence of numeric calculations.

In particular, a proof of a $\Sigma_1$ statement is validated if we can find any substitution making the $\Sigma_1$ conclusion true (since we have then found an actual witness to the $\Sigma_1$ statement).  On the other hand, if a given substitution fails to validate the conclusion, it must contain an erroneous step: some \emph{critical formula} $\phi(u,\vec t)\rightarrow\phi(c_{\exists x\,\phi}(\vec t),\vec t)$ in the proof must be false.  This gives us a way to make a small improvement in our substitution, by adding the information that we can interpret $c_{\exists x\,\phi}(\vec t)$ as (the value of) $u$; the sequence of substitutions resulting from this process of successive improvements is the \emph{$H$-process}.  Hilbert proposed that if one could prove that the $H$-process eventually terminates, one has shown the consistency of arithmetic.  After Gentzen had proved the consistency of arithmetic using cut-elimination \cite{MR1513060}, Ackermann proved the termination of this process for Peano arithmetic \cite{Ackermann1940} (using, of course, transfinite induction up to $\epsilon_0$).

Work by many people has extended variations of Gentzen's techniques to theories with the strength of $\Pi^1_1$-comprehension \cite{Takeuti1967,MR644179} and then to stronger theories \cite{MR655036,MR881218,AraiCutMahlo,AraiCutWC,RathjenMahlo}, culminating in systems at the strength of $\Pi^1_2$-comprehension \cite{RathjenStability,RathjenParameterFree,MR2883366,RathjenClaim,AraiClaim}.

These proofs became increasingly complicated (notably, the analyses of $\Pi^1_2$-comprehension remain unpublished twenty years after they were announced), leading Mints to advocate the $\epsilon$-substitution method as an alternate approach.  In particular, Mints \cite{Mints1994} developed a combination technique, in which one shows that the $H$-process terminates by constructing a different sequent calculus for substitutions and applying cut-elimination to this process.  This method was extended to more powerful theories, including elementary analysis \cite{MintsTupailoBuchholz1996}, ramified analysis \cite{MintsTupailo1999}, and the hyperarithmetical hierarchy \cite{Arai2002}.  

All these systems, however, are predicative, and impredicativity has always been a significant hurdle for proof-theoretic methods \cite{MR2457679}.  Arai \cite{Arai2003} created a version of the $\epsilon$-substitution method for an impredicative theory (a certain theory of inductive definitions) and proved its termination using a proof closer to Ackermann's proof, later extending this to a stronger theory of inductive definitions \cite{MR2275854}.

Mints continued to look for a proof of the termination of the $H$-process for impredicative theories.  This search focused on the theory $ID_1$, which adds an inductive predicate to Peano arithmetic.  After Mints (and, starting in 2003, the author) struggled for several years to find such a proof, Mints proposed a modified, non-deterministic $H$-process for $ID_1$ was able to prove that it terminated, first by non-effective means \cite{mints_centenary} and then by effective ones \cite{MR3204984}.  This was still not completely satisfactory: the non-determinism cannot be necessary, since Arai \cite{Arai2003} was able to prove termination of the original $H$-process.


This paper address that gap: we formulate the ordinary $H$-process for $ID_1$---essentially the same process as in Arai's work, adapted to avoid ordinal notations---and prove that it terminates via a cut-elimination proof in the style of \cite{MintsTupailoBuchholz1996}.  In addition to those ideas and inspiration from Arai's version \cite{Arai2003}, we use adaptations to the cut-elimination techniques introduced in \cite{MR2139689}.

\subsection{Dedication}

This paper is dedicated to Grisha Mints.  Without Grisha's guidance, I would never have become a mathematician, and for many years I was fortunate to have his continued support and advice.

\section{$ID_1$}

$ID_1$ is the theory of Peano arithmetic augmented by a predicate standing for an inductively defined set.  Formally, we fix a formula $A(x,X)$, where $X$ is a second order variable which appears positively in $A$, and add a new predicate $I$ to the language.  (For clarity, we typically ``abbreviate'' the formula $It$ by $t\in I$.)  We add an axiom and an axiom scheme:
\begin{itemize}
\item Inductive definition: $\forall x(x\in I\leftrightarrow A(x,I))$, and
\item Inductive closure: $\forall x(A(x,\phi)\rightarrow\phi(x))\rightarrow \forall x(x\in I\rightarrow\phi(x))$.
\end{itemize}

 It is known that restricting $A$ to $\Pi_1$ formulas does not weaken the system \cite{EIAS}, and since Peano arithmetic can encode pairing, we may further assume that $A(x,X)$ has the form $\forall y\, B(y,x,X)$ where $B$ is quantifier-free.

\section{Skolem Functions and $\epsilon$-substitution}

\subsection{The Skolemized Language}

We work in a Skolemized version of the language of $ID_1$.  That is, taking $\mathcal{L}_0$ to be the language of $ID_1$, we work in a language $\mathcal{L}_\epsilon$ which replaces quantifiers with countably many new function symbols intended to be witnesses to existential quantifiers.

\begin{definition}
  We say a formula $\phi(x,\vec y)$ with distinguished variable $x$ is \emph{simple} if $\phi(x,\vec y)$ contains no closed terms.

A \emph{Skolem term} (in a language $\mathcal{L}$ extending $\mathcal{L}_0$) is a term of the form $c(\vec t)$ where $c$ is a function symbol not present in $\mathcal{L}_0$.

  Whenever $\mathcal{L}$ is a language extending $\mathcal{L}_0$, define $\mathcal{L}'$ by adding, for each formula $\exists x\,\phi(x,\vec y)$ such that $\phi(x,\vec y)$ is quantifier-free and simple, a $|\vec y|$-ary Skolem function $c_{\exists x\,\phi(x,\vec y)}$.

  Let $\mathcal{L}_0$ be the language of $ID_1$.  Given $\mathcal{L}_n$, we define $\mathcal{L}_{n+1}=\mathcal{L}'_n$.  We set $\mathcal{L}_{\omega}=\bigcup\mathcal{L}_n$ and take $\mathcal{L}_\epsilon$ to be the quantifier-free part of $\mathcal{L}_{\omega}$.

The \emph{simple rank}, $rk_s(t)$, of an expression is the smallest $n$ such that the expression appears in $\mathcal{L}_n$.
\end{definition}

Having removed quantifiers from our language, we reintroduce them as abbreviations for certain formulas with Skolem functions; this will be unambiguous because all quantifiers appearing in the rest of this paper will be abbreviations of this kind.
\begin{definition}
  $\exists x\,\phi(x,\vec t)$ is an abbreviation for $\phi(c_{\exists x\,\phi}(\vec t),\vec t)$ and $\forall x\,\phi(x,\vec t)$ is an abbreviation for $\phi(c_{\exists x\,\neg\phi}(\vec t),\vec t)$.
\end{definition}

In particular, we have the distinguished formula $A(y,X)=\forall x\,B(x,y,X)$; since $B$ is quantifier-free, $B$ is a formula of $\mathcal{L}_\epsilon$, and we may reinterpret $A(y,X)$ as the appropriate formula
\[B(c_{\exists x\,\neg B(x,y,X)}(y),y,X).\]

\begin{definition}
  The only rule of $ID_1^\epsilon$ is modus ponens.  The axioms are:
  \begin{enumerate}
  \item All propositional tautologies,
  \item All substitution instances of quantifier free defining axioms for all predicates in the language of Peano arithmetic,
  \item Equality axioms $t=t$, $s=t\rightarrow\phi(s)\rightarrow\phi(t)$ for all terms $s,t$,
  \item $\neg St=0$ and $Ss=St\rightarrow s=t$,
  \item The \emph{critical formulas}:
    \begin{itemize}
    \item \textbf{Pred}: $\neg s=0\rightarrow \exists x\, s=Sx$,
    \item \textbf{Epsilon}: $\phi(t)\rightarrow\exists x\, \phi(x)$,
    \item \textbf{Induction}: $\phi(0)\wedge\neg\phi(t)\rightarrow\exists x\,(\phi(x)\wedge \neg\phi(Sx))$,
    \item \textbf{Inductive Definition}: $A(t,I)\rightarrow t\in I$,
    \item \textbf{Closure}: $\forall x(A(x,\phi)\rightarrow\phi(x))\rightarrow\forall x(x\in I\rightarrow\phi(x))$.
    \end{itemize}
  \end{enumerate}
\end{definition}

\section{$\epsilon$-Substitutions}

\begin{definition}
  A \emph{canonical expression} is either:
  \begin{itemize}
  \item a term of the form $c(\vec t)$ where $c$ is a Skolem function and each $t_i$ is a numeral, or
  \item a formula of the form $t\in I$ where $t$ is a numeral.
  \end{itemize}

We abbreviate the rank $\Omega$ canonical term $c_{\exists x\,\neg B(x,y,I)}(n)$ by $c_n$.
\end{definition}

\begin{definition}
  The \emph{rank}, $rk(c)$ of a function symbol $c=c_{\exists x\,\phi(x,\vec y)}$ is given as follows:
  \begin{itemize}
  \item If $\phi$ does not contain $I$, $rk(c_{\exists x\,\phi(x,\vec y)})=rk_s(c_{\exists x\,\phi(x,\vec y)}(\vec 0))$,
  \item If $\phi$ does contain $I$ and is not $B(x,y,I)$, $rk(c_{\exists x\,\phi(x,\vec y)})=\Omega+rk_s(c_{\exists x\,\phi(\vec y)}(\vec 0))+1$,
  \item $rk(c_{\exists x\, \neg B(x,y,I)})=\Omega$.\footnote{Grisha would not have approved of this definition.  Many, many drafts ago, he warned me that an inductive definition should not require this sort of exceptional case.  This was, as always, good advice: in every other case where I have been tempted to do such a thing, finding the correct definition not only mooted the need to create an exception, but simplified other parts of the proof as well.  In this case, however, I have been unable to find the correct definition, and the current one seems to work well enough.}
  \end{itemize}

The rank of the formula $t\in I$ is $\Omega$.
\end{definition}
When $e$ is a canonical term $c(\vec t)$, we often refer to the rank $rk(e)$ to mean $rk(c)$.

\begin{definition}
  An \emph{$\epsilon$-substitution} is a function $S$ such that:
  \begin{itemize}
  \item The domain of $S$ is a set of canonical expressions,
  \item If $e\in\dom(S)$ is a term then $S(e)$ is either a positive numeral or the symbol $?$,
  \item If $e\in\dom(S)$ is a formula then $S(e)$ is either $\top$ or the symbol $?$.
  \end{itemize}
An $\epsilon$-substitution is \emph{total} if its domain is the set of all canonical terms.
\end{definition}

\begin{definition}
  If $S$ is an $\epsilon$-substitution and $\bowtie\in\{<,\leq,=,\geq,>\}$, we define
\[S_{\bowtie r}=\{(e,u)\in S\mid rk(e)\bowtie r\}.\]
\end{definition}

\begin{definition}
  The \emph{standard extension} $\overline{S}$ is given by:
\[\overline{S}=S\cup\{(e,?)\mid e\not\in\dom(S)\}.\]
\end{definition}

\begin{definition}
  If $t$ is a term, we define the \emph{reduction} of $t$ with respect to $S$, $|t|_S$, by induction on $t$:
  \begin{itemize}
  \item If $t$ is a canonical Skolem term in the domain of $S$ and $S(t)={?}$ then $|t|_S=0$,
  \item If $t$ is a canonical Skolem term in the domain of $S$ and $S(t)\neq {?}$ then $|t|_S=S(t)$,
  \item If $t$ is a canonical Skolem term not in the domain of $S$ then $|t|_S=t$,
  \item If $t$ is a non-canonical Skolem term of the form $c(\vec s)$ and for some $i$, $|s_i|_S$ is not a numeral then $|t|_S=c(|\vec s|_S)$,
  \item If $t$ is a non-canonical Skolem term of the form $c(\vec s)$ and for every $i$, $|s_i|_S$ is a numeral then $|t|_S=|c(|\vec s|_S)|_S$,
  \item If $t$ is a term of the form $f(\vec s)$ for some function symbol $f$ of Peano arithmetic, and for some $i$, $|s_i|_S$ is not a numeral then $|t|_S$ is the term $f(|\vec s|_S)$,
  \item If $t$ is a term of the form $f(\vec s)$ for some function symbol $f$ of Peano arithmetic, and for every $i$, $|s_i|_S$ is a numeral then $|t|_S=\hat f(|\vec s|)$ where $\hat f$ is the interpretation of $f$ in the standard model.
  \end{itemize}

If $\phi$ is a formula, we define $|\phi|_S$ inductively by:
\begin{itemize}
\item $|R\vec t|_S=R|\vec t|_S$ for any relation symbol $R$,
\item $|\phi\wedge\psi|_S=|\phi|_S\wedge|\psi|_S$,
\item $|\phi\vee\psi|_S=|\phi|_S\vee|\psi|_S$,
\item $|\neg\phi|_S=\neg|\phi|_S$,
\item If $|t|_S$ is a numeral, $|t|_S\in I$ is in $\dom(S)$, and $S(|t|\in I)=\top$ then $|t\in I|_S=\top$,
\item If $|t|_S$ is a numeral, $|t|_S\in I$ is in $\dom(S)$, and $S(|t|\in I)={?}$ then $|t\in I|_S=\bot$,
\item If $|t|_S$ is not a numeral or $|t|_S\in I$ is not in $\dom(S)$ then $|t\in I|_S=|t|_S\in I$.
\end{itemize}

We say $S\vDash\phi$ if $|\phi|_S$ is sentence in the language of Peano arithmetic which is true in the standard model.
\end{definition}

Note that it is possible to have $S\not\vDash\phi$ and $S\not\vDash\neg\phi$.

\begin{definition}
  $S$ \emph{decides} $\phi$ if $S\vDash\phi$ or $S\vDash\neg\phi$.
\end{definition}
It is easy to check that if $S$ is total then $S$ decides all sentences.

\begin{lemma}\label{thm:low_rank_equiv}
  \begin{enumerate}
  \item If all expressions  in $t$ have rank $<r$ and $S_{<r}=S'_{<r}$ then $|t|_S=|t|_{S'}$.
  \item If all expressions in $\phi$ have rank $<r$ and $S_{<r}=S'_{<r}$ then $S\vDash\phi$ iff $S'\vDash\phi$.
  \end{enumerate}
\end{lemma}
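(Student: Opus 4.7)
The plan is to prove (1) first by induction on the well-founded recursion defining $|t|_S$, and then derive (2) by a straightforward induction on $\phi$ using (1). The key invariant, automatic from the hypothesis, is that every canonical expression $e$ at which the computation of $|t|_S$ consults the substitution satisfies $rk(e)<r$, so that $S(e)=S'(e)$.

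For (1), I case on which clause of the definition applies. If $t=c(\vec m)$ is a canonical Skolem term, then $rk(t)=rk(c)<r$ by hypothesis, so $t\in\dom(S)$ iff $t\in\dom(S')$ and, when it is, $S(t)=S'(t)$; the three corresponding clauses (in the domain with a numeral value, in the domain with value ${?}$, not in the domain) match under $S$ and $S'$. If $t=c(\vec s)$ is a non-canonical Skolem term, then all Skolem symbols appearing in $\vec s$ have rank $<r$, so the inductive hypothesis gives $|s_i|_S=|s_i|_{S'}$ for each $i$; if some $|s_i|_S$ is not a numeral, both sides yield $c(|\vec s|_S)$, while if all of them are numerals the resulting canonical term $c(|\vec s|_S)$ still has rank $rk(c)<r$, so the look-up and any further reduction proceed identically under $S$ and $S'$. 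The Peano function-symbol cases are immediate from the inductive hypothesis on the arguments.

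For (2), I induct on $\phi$. Propositional cases are immediate from the inductive hypothesis, and atomic formulas $R\vec t$ reduce to (1) applied to each $t_i$. For the critical case $t\in I$, part (1) gives $|t|_S=|t|_{S'}$; since the expression $t\in I$ appears in $\phi$ its rank $\Omega$ is $<r$, so if the common reduction is a numeral $n$ then $S$ and $S'$ agree on the canonical formula $n\in I$ and the clauses giving $\top$ or $\bot$ match, while if the reduction is not a numeral both sides yield $|t|_S\in I$.

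The only mildly subtle point is that the recursion for $|t|_S$ is not purely structural: the non-canonical Skolem-term clause rewrites $c(\vec s)$ to $c(|\vec s|_S)$ and then recurses on that possibly-canonical term. This is what is effectively being assumed well-defined in the preceding definition, and with a measure such as the lexicographic pair (simple rank of the outermost Skolem symbol, number of Skolem-term occurrences in $t$) the recursion is well-founded, so the induction in (1) goes through without incident; everything after that is routine clause-by-clause bookkeeping.
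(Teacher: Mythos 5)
Your proof is correct and follows the same route as the paper, which simply states that part (1) is a straightforward induction on $t$ and part (2) a straightforward induction on $\phi$ using part (1). Your additional remarks on the well-foundedness of the recursion defining $|t|_S$ and the rank bookkeeping at the $t\in I$ clause are accurate fillings-in of details the paper leaves implicit.
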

\begin{proof}
The first part is a straightforward induction on $t$.  The second part is a straightforward induction on $\phi$, using the first part.
\end{proof}

We would like our substitutions to be correct; for instance, if $(c_{\exists x\,\phi(x,\vec y)}(\vec t),n)\in S$, it should be the case that $\overline{S}\vDash\phi(n,\vec t)$.  To make this precise, we identify, for each $(e,u)$, a formula $F(e,u)$, with the expectation that $F(e,u)$ should be true if $(e,u)$ is to be put in $S$.

\begin{definition}
\[F(e,u)=\left\{\begin{array}{ll}
\top&\text{if }u={?}\\
\phi(u,\vec t)\wedge\bigwedge_{v<u}\neg\phi(v,\vec t)&\text{if }e=c_{\exists x\,\phi(x,\vec y)}(\vec t)\text{ and }u\neq {?}\\
A(t,I)&\text{if }e=t\in I\text{ and }u\neq {?}
\end{array}\right.\]

We say $S$ is \emph{correct} if for every $(e,u)\in S$, $\overline{S}\vDash F(e,u)$.  We say $S$ is \emph{computationally consistent} (cc) if whenever $(e,u)\in S$, $S\not\vDash\neg F(e,u)$, and if there is not an $n$ such that $(c_n,u)\in S$ with $u\neq {?}$ and also $(n\in I,\top)\in S$.  We say $S$ is \emph{computationally inconsistent} (ci) if $S$ is not cc---if there is an $(e,u)\in S$ such that $S\vDash \neg F(e,u)$, or if for some $n, u\neq {?}$, $(c_n,u),(n\in I,\top)\in S$.

$S$ is \emph{computing} if for every $(e,u)\in S$, $S$ decides $F(e,u)$.
\end{definition}

\begin{lemma}\label{thm:low_rank_correct}
  If $rk(e)=r\neq\Omega$, $S\vDash F(e,u)$, and $S_{<r}=S'_{<r}$ then $S'\vDash F(e,u)$.
\end{lemma}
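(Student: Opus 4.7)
The plan is to apply Lemma~\ref{thm:low_rank_equiv}(2); for that, I need only verify that every canonical sub-expression appearing in $F(e,u)$ has rank strictly less than $r$. The trivial case $u = {?}$ gives $F(e,u) = \top$, so assume $u \neq {?}$. Since every canonical $\in I$-formula has rank $\Omega$ and $r \neq \Omega$, the expression $e$ cannot be of the form $t \in I$; it must be a canonical Skolem term $c_{\exists x\,\phi(x,\vec y)}(\vec t)$, so $F(e,u) = \phi(u,\vec t) \wedge \bigwedge_{v < u} \neg \phi(v,\vec t)$. The numerals $u$, $v$, and $\vec t$ contribute nothing to rank, so the real task is to bound the ranks of the Skolem sub-terms and $\in I$-subformulas appearing in $\phi$.

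I would split on the sign of $r - \Omega$. If $r < \Omega$, the rank clauses force $\phi$ to be $I$-free, so no $\in I$-subformula appears in $F(e,u)$; moreover $r = rk_s(c(\vec 0))$, so $\exists x\,\phi$ belongs to $\mathcal{L}_{r-1}$, meaning every Skolem sub-term of $\phi$ has simple rank at most $r - 1$ and, being $I$-free, has full rank equal to its simple rank, hence $< r$. If $r > \Omega$, write $r = \Omega + n + 1$ with $n = rk_s(c(\vec 0))$; every Skolem sub-term of $\phi$ has simple rank at most $n - 1$, so unwinding the three clauses of the rank definition bounds each such sub-term's rank by $\max(n - 1,\ \Omega + n,\ \Omega) < \Omega + n + 1 = r$, while every canonical $\in I$-subformula has rank $\Omega < r$. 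In either case every canonical sub-expression of $F(e,u)$ has rank $< r$, and Lemma~\ref{thm:low_rank_equiv}(2) delivers $S' \vDash F(e,u)$.

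There is no real obstacle here: the statement is essentially the assertion that the rank definition was calibrated so that each Skolem function strictly outranks the canonical expressions in its own defining formula $F$, and all that remains is to check this in each clause of the rank definition. The one point to watch is the exceptional clause $rk(c_{\exists x\,\neg B}) = \Omega$, but such sub-terms only appear when $\phi$ contains $I$, forcing $r > \Omega$, and then the bound $\Omega < r$ handles them uniformly.
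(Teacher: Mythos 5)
Your proof is correct and follows the same route as the paper: the paper's entire argument is the one-line observation that when $r\neq\Omega$ every canonical expression in $F(e,u)$ has rank $<r$, after which Lemma~\ref{thm:low_rank_equiv} finishes the job. You have simply spelled out the rank bookkeeping (the $r<\Omega$ versus $r>\Omega$ split and the handling of the exceptional clause for $c_{\exists x\,\neg B}$) that the paper leaves implicit.
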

\begin{proof}
When $r\neq\Omega$, all canonical expressions occurring in $F(e,u)$ have rank $<r$, so the claim follows from Lemma \ref{thm:low_rank_equiv}.
\end{proof}

Note that there are two types of canonical expressions of rank $\Omega$, formulas of the form $n\in I$ and the terms $c_n$ (i.e. terms of the form  $c_{\exists x\,\neg B(x,y,I)}(n)$). 
\begin{definition}
Expressions of the form $n\in I$ are \emph{positive rank $\Omega$ expressions}.  We write 
\[S_{=\Omega}^{+,\mathrm{form}}=\{(e,\top)\in S\mid rk(e)=\Omega\text{ and }e\text{ is a formula}\},\]
\[S_{=\Omega}^{+,\mathrm{term}}=\{(e,?)\in S\mid rk(e)=\Omega\text{ and }e\text{ is a term}\},\]
and
\[S_{=\Omega}^+=S_{=\Omega}^{+,\mathrm{form}}\cup S_{=\Omega}^{+,\mathrm{term}}.\]

Expressions of the form $c_n$ are \emph{negative rank $\Omega$ expressions}.  We write 
\[S_{=\Omega}^{-,\mathrm{term}}=\{(e,v)\in S\mid rk(e)=\Omega\text{ and } e\text{ is a term}\},\]
\[S_{=\Omega}^{-,\mathrm{form}}=\{(e,?)\in S\mid rk(e)=\Omega\text{ and }e\text{ is a formula}\},\]
and
\[S_{=\Omega}^-=S_{=\Omega}^{-,\mathrm{term}}\cup S_{=\Omega}^{-,\mathrm{form}}.\] 
\end{definition}
 The fact that positive and negative expressions can be separated by distinguishing formulas from terms is an incidental feature of our formalism; the important fact is that $I$ appears positively in $F(e,u)$ when $e$ is a positive expression and negatively when $e$ is negative.  Rank $\Omega$ terms have cyclical dependencies---$F(n\in I,\top)$ is the formula $A(n,I)$, which contains $c_{\exists x\,\neg B(x,y,I)}(n)$, while $F(c_{\exists x\,\neg B(x,y,I)},m)(n)$ is $\neg B(m,n,I)$, which may contain various expressions $t\in I$.  The analog to Lemma \ref{thm:low_rank_correct} depends on the fact that $I$ appears positively in $B(x,y,I)$, so negatively in $\neg B(x,y,I)$, and so again positively in $A(y,I)$.
\begin{lemma}\label{thm:low_rank_correct_Omega}
Suppose $rk(e)=\Omega$, $\overline{S}\vDash F(e,u)$, and $S_{<\Omega}=S'_{<\Omega}$.
  \begin{itemize}
  \item If $e$ is a positive rank $\Omega$ expression and $(S')^-_{<\Omega}\subseteq S^-_{<\Omega}$ then $\overline{S'}\vDash F(e,u)$,
  \item If $e$ is a negative rank $\Omega$ expression and $(S')^+_{<\Omega}\subseteq S_{<\Omega}^+$ then $\overline{S'}\vDash F(e,u)$.
  \end{itemize}
\end{lemma}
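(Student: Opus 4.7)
The case $u = ?$ is immediate from $F(e, ?) = \top$, so assume $u \neq ?$. In the positive case, $e = t \in I$ with $t$ a numeral $n$ and $u = \top$, so $F(e, u) = A(t, I) = B(c_n, n, I)$, where $c_n$ abbreviates $c_{\exists x\,\neg B(x, y, I)}(n)$. The plan is to unpack the evaluation of $F$ under $\overline{S}$ and $\overline{S'}$ step by step: the outer canonical term $c_n$ reduces depending only on $S^-_{=\Omega}$, and the stated containment governs how the two reductions can differ between the substitutions. Once this reduction is fixed, the remaining formula $B(v, n, I)$ is quantifier-free in $\mathcal{L}_0$, so it contains no further Skolem terms; its only non-trivial dependence on the substitution is through the $I$-atoms $s \in I$ inside, which live in $S^+_{=\Omega}$. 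All sub-terms of lower rank reduce identically by Lemma~\ref{thm:low_rank_equiv} and $S_{<\Omega} = S'_{<\Omega}$. The positive occurrence of $I$ in $B$ (equivalently, in $A$) then supplies the monotonicity needed to transfer truth from $\overline{S}$ to $\overline{S'}$.

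In the negative case, $e = c_n$ and $u$ is a positive numeral, so $F(e, u) = \neg B(u, n, I) \wedge \bigwedge_{v<u} B(v, n, I)$. The formula now contains no Skolem terms at all (since $B$ is quantifier-free in $\mathcal{L}_0$ and $c_n$ has already been instantiated), so its dependence on the rank-$\Omega$ portion of the substitution is entirely through its $I$-atoms, controlled by $S^+_{=\Omega}$, with the lower-rank sub-terms again fixed by $S_{<\Omega} = S'_{<\Omega}$. The stated containment on the positive parts, together with the polarity of $I$ (negative in $\neg B(u, n, I)$ and positive in each $B(v, n, I)$), yields preservation via an induction on the quantifier-free structure of $B$, along the lines of the remark preceding the lemma on the alternation of polarities between $B$, $\neg B$, and $A$.

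The main obstacle I anticipate is the polarity bookkeeping in the negative case, where $I$ appears in $F(e,u)$ with mixed polarity. The trick will be to use the alternation ($I$-positive in $B$, $I$-negative in $\neg B$, $I$-positive in $A$) consistently at each step of the induction on the structure of $B$, and to verify that the containment is applied in the right direction at the $I$-atoms themselves; all other cases (Boolean connectives, Peano atoms) should be routine via the induction hypothesis and the identification of lower-rank reductions.
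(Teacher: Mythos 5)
Your overall strategy---peel off the outer Skolem term, then argue atom-by-atom monotonicity on the $I$-atoms of the quantifier-free kernel---runs into the problem that, in both cases, the monotonicity you need points in a direction the stated hypotheses do not supply. In the positive case $F(n\in I,\top)$ is $A(n,I)=B(c_n,n,I)$ with $I$ occurring positively, so transferring truth from $\overline{S}$ to $\overline{S'}$ atom by atom requires that every atom $m\in I$ true under $\overline{S}$ remain true under $\overline{S'}$, i.e.\ $S^{+,\mathrm{form}}_{=\Omega}\subseteq (S')^{+,\mathrm{form}}_{=\Omega}$. The given containment $(S')^{-}_{=\Omega}\subseteq S^{-}_{=\Omega}$ constrains only the $?$-valued formulas and the rank-$\Omega$ terms; it is perfectly compatible with $(m\in I,\top)\in S$ while $m\in I\notin\dom(S')$, in which case $\overline{S'}$ makes that atom false. (You also leave open the subcase where $c_n\notin\dom(S')$ but $(c_n,v)\in S$ with $v\neq{?}$: there the two reductions of the outer term are $0$ and $v$, and nothing in your plan relates $B(v,n,I)$ to $B(0,n,I)$.) The paper's proof of this case does not attempt the atomwise argument at all: it argues by contraposition purely through the Skolem term, taking $\overline{S'}\not\vDash A(n,I)$ to force $(c_n,v)\in S'$ with $v\neq{?}$, whence $(c_n,v)\in S$ and $\overline{S}\not\vDash A(n,I)$.

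In the negative case you correctly unfold $F(c_n,u)=\neg B(u,n,I)\wedge\bigwedge_{v<u}B(v,n,I)$ and correctly flag the mixed polarity as the obstacle, but the obstacle is not bookkeeping---it defeats this line of argument. The containment $(S')^{+}_{=\Omega}\subseteq S^{+}_{=\Omega}$ shrinks the set of true $I$-atoms in passing from $\overline{S}$ to $\overline{S'}$, which preserves the $I$-negative conjunct $\neg B(u,n,I)$ but can falsify the $I$-positive minimality conjuncts $B(v,n,I)$; for those conjuncts the required inclusion is the reverse one, and no consistent tracking of the alternation between $B$, $\neg B$, and $A$ changes that. The paper's (much terser) proof sidesteps this entirely by treating $F(c_n,u)$ as just $\neg B(u,n,I)$, in which every $I$-atom is negative, and applying monotonicity once. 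So while your reading of $F(e,u)$ is the more faithful one, the proof as planned cannot be closed from the hypotheses as stated; you would need either the extra positive containment in the appropriate direction or an argument, like the paper's, that routes the entire dependence through the single term $c_n$.
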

\begin{proof}
  In the first case, $F(e,u)$ is $A(n,I)$ for some $n$, so if $\overline{S'}\not\vDash F(e,u)$, it must be because $(c_{\exists x\,\neg B(x,y,I)}(n),u)\in S'$ with some $u\neq{?}$.  Therefore also $(c_{\exists x\,\neg B(x,y,I)}(n),u)\in S$, so $\overline{S}\not\vDash F(e,u)$.

  In the second case, $F(e,u)$ is $\neg B(u,n,I)$ for some $u,n$.  The only canonical expressions appearing in $\neg B(u,n,I)$ are expressions of the form $t\in I$, all of which appear negatively.  The claim follows immediately using the monotonicity assumption.
\end{proof}



The main point of these definitions is the following:
\begin{theorem}
  Suppose that for every proof of a formula $\phi$ in $ID_1$, there is a computing correct $\epsilon$-substitution $S$ such that $S\vDash\phi$.  Then $ID_1$ is $1$-consistent.
\end{theorem}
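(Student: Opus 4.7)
The plan is to extract an actual numerical witness directly from the hypothesized $\epsilon$-substitution. Suppose $ID_1\vdash\exists x\,\phi(x)$ with $\phi$ a quantifier-free formula of Peano arithmetic; I want an $n$ with $\phi(n)$ true in the standard model. Pulling any closed terms out into free-variable positions, write the body as $\phi'(x,\vec t)$ with $\phi'(x,\vec y)$ simple, so that in $\mathcal{L}_\epsilon$ the sentence $\exists x\,\phi(x)$ is literally the closed formula $\phi'(c(\vec t),\vec t)$ where $c=c_{\exists x\,\phi'(x,\vec y)}$. The standard Skolemization translation carries an $ID_1$-proof into a derivation of this Skolemized formula in the $\epsilon$-calculus, so the hypothesis supplies a computing correct $\epsilon$-substitution $S$ with $S\vDash\phi'(c(\vec t),\vec t)$.

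From there I would unwind the definition of $|\cdot|_S$. The closed PA-terms $\vec t$ reduce to numerals $\vec n$, so $|c(\vec t)|_S=|c(\vec n)|_S$ and $c(\vec n)$ is canonical. If $c(\vec n)\not\in\dom(S)$ the reduction leaves the Skolem symbol $c$ in place, so $|\phi'(c(\vec t),\vec t)|_S$ fails to be a sentence of Peano arithmetic, contradicting $S\vDash\phi'(c(\vec t),\vec t)$. Hence $c(\vec n)\in\dom(S)$; setting $m=|c(\vec n)|_S$---which is either the positive numeral $S(c(\vec n))$ or $0$ (when $S(c(\vec n))={?}$)---gives $|\phi'(c(\vec t),\vec t)|_S=\phi'(m,\vec n)$, and the satisfaction relation forces this PA-sentence to be true in the standard model, exhibiting $m$ as the required witness.

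The main piece of bookkeeping---and where I would identify the real work---is the initial translation of $ID_1$-proofs to $\epsilon$-calculus derivations of their Skolemizations. Each axiom of $ID_1$ corresponds to a critical formula: \textbf{Epsilon} together with the $\exists/\forall$ abbreviations recovers the quantifier rules, \textbf{Induction} gives arithmetical induction, and \textbf{Inductive Definition} together with \textbf{Closure} encode the defining clause and induction principle for $I$. This is standard for Skolemization-based presentations but has to be done carefully. Once the translation is in hand, the witness extraction is essentially forced by the definitions, with correctness and computability of $S$ playing no essential role in this final step---their purpose lies entirely in making the hypothesis itself achievable via the $H$-process analysis of the rest of the paper.
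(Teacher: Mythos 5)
Your proposal is correct and follows the same route as the paper: translate the $ID_1$-proof into $ID_1^\epsilon$, apply the hypothesis to get $S\vDash\phi'(c(\vec t),\vec t)$, and read off the witness from the reduction $|\cdot|_S$. You simply spell out details the paper leaves implicit (why $c(\vec n)$ must lie in $\dom(S)$, and that the witness is $|c(\vec n)|_S$ rather than the term itself), and your observation that correctness and computability of $S$ are not used in this step is accurate.
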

\begin{proof}
  Suppose $ID_1\vdash\exists x\,\phi(x,\vec t)$ where $\phi$ is quantifier free and the $\vec t$ are numerals.  Then $ID_1^\epsilon\vdash\phi(c_{\exists x\,\phi}(\vec t),\vec t)$.  By assumption there is an $S$ so that $S\vDash\phi(c_{\exists x\,\phi}(\vec t),\vec t)$, and so $\phi(c_{\exists x\,\phi}(\vec t),\vec t)$ is a true quantifier-free formula, so $\exists x\,\phi(x,\vec t)$ is a true $\Sigma_1$ formula witnessed by the number $c_{\exists x\,\phi}(\vec t)$.
\end{proof}

The following properties are standard \cite{Ackermann1940,MintsTupailoBuchholz1996}:
\begin{lemma}
  \begin{itemize}
  \item If $\sigma$ is an axiom other than a critical formula then $\overline{S}\vDash\sigma$, and
  \item If $S\vDash\phi$ and $S\vDash\phi\rightarrow\psi$ then $S\vDash\psi$.
  \end{itemize}
\end{lemma}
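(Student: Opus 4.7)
The plan is to treat the two bullets separately by unfolding the reduction $|\cdot|_S$. For the first, I would first record a preliminary observation: on $\overline{S}$, every closed term reduces to a numeral. This is a routine induction on term structure. PA function symbols applied to numerical arguments reduce via the standard-model interpretation $\hat f$; Skolem terms whose arguments are numerals are canonical and hence lie in $\dom(\overline{S})$, which maps them to either an explicit numeral or to $?$ (the latter interpreted as $0$). Consequently every atomic formula $R\vec t$ with $R$ in the Peano language becomes a decidable statement of the standard model under $\overline{S}$, and the inductive clauses for $\wedge,\vee,\neg$ show that every sentence is decided by $\overline{S}$.

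With this preliminary in hand, the non-critical axioms split into transparent families. Propositional tautologies hold because $|\cdot|_{\overline{S}}$ commutes with the propositional connectives and hence preserves logical form, so every propositional tautology evaluates to a truth-functional tautology over decided atoms. The defining axioms for PA predicates and function symbols hold because reduction respects the standard-model interpretation of their function symbols on numeral inputs. The successor axioms $\neg St = 0$ and $Ss = St \to s = t$ are immediate from the interpretations of $S$ and $0$. The equality axioms hold because $|t|_{\overline{S}}$ depends only on the reductions of the subterms of $t$, so substituting a term for an equal one (after both sides reduce to the same numeral) leaves the reduced formula unchanged.

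For the second bullet, reading $\phi\to\psi$ as $\neg\phi\vee\psi$ in the usual way, the reduction clauses give $|\phi\to\psi|_S = \neg|\phi|_S\vee|\psi|_S$. The hypotheses then assert that $|\phi|_S$ and $\neg|\phi|_S\vee|\psi|_S$ are true sentences of PA, from which $|\psi|_S$ is true by ordinary propositional modus ponens on the reduced sentences, i.e.\ $S\vDash\psi$. Note that no supplementary appeal to $\overline{S}$ is needed here: the validity hypotheses already entail that the reductions in question are sentences of Peano arithmetic.

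The only step that deserves any care is the preliminary termination of reduction on $\overline{S}$, because the clauses for non-canonical Skolem terms mutually recurse on whether the arguments reduce to numerals; once that induction is cleanly laid out, the remainder of the lemma is essentially bookkeeping. Its role in the paper is exactly to certify that $\epsilon$-substitution semantics handles all non-critical rules automatically, so the subsequent work can focus entirely on critical formulas and the behavior of the $H$-process.
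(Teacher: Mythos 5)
Your proof is correct. The paper itself gives no argument for this lemma---it simply labels the properties as standard and cites Ackermann and Mints--Tupailo--Buchholz---and what you have written is precisely the standard verification that would be supplied there: the totality of $\overline{S}$ forces every closed term to reduce to a numeral and every sentence to be decided, after which each family of non-critical axioms reduces to a true statement of the standard model, and modus ponens is immediate from the fact that reduction commutes with the propositional connectives.
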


Therefore to prove the $1$-consistency of $ID_1$, it suffices to show that for every proof in $ID_1$, we can construct $S$ satisfying every critical formula in that proof.

\subsection{History}

Because $ID_1$ is impredicative, it is not enough to track substitution values: when we want to make $n\in I$ true, we also have to track the history which justified that.

\begin{definition}
  A \emph{substitution history} is a pair $P,V$ such that $P$ is a finite sequence of distinct formulas, each of the form $n\in I$ for some $n$, and $V$ is a function whose domain is the set of formulas appearing in $P$ and for each $n\in I$ in $\dom(V)$, $V(n\in I)$ is a finite $\epsilon$-substitution with $(V(n\in I))^-_{=\Omega}=V(n\in I)$.

  $P=\langle n_1\in I,\ldots,n_k\in I\rangle$ is \emph{admissible} if for each $i\leq k$, $|B(0,n_i,I)|_{\overline{\{(n_j\in I,\top)\}_{j<i}}}$ is true.

A \emph{historical substitution} is a triple $(S,P,V)$ where $P,V$ is a substitution history and the sequence $P$ is an ordering of $\dom(S_{=\Omega}^{+,\mathrm{form}})$.
\end{definition}
Recall that $S_{=\Omega}^{+,\mathrm{form}}$ is the set of $n\in I$ such that $(n\in I,\top)\in S$.

We often equate historical substitutions $(S,P,V)$ with just their substitution $S$; for instance, we speak of the domain of $(S,P,V)$, by which we mean the domain of $S$.

When we have a sequence of historical substitutions $(S_1,P_1,V_1),\ldots,(S_k,P_k,V_k)$, the sequences $P_i$ represent the order in which formula $n\in I$ appeared in $S_j$ for $j\leq i$.  When we update $S_k$ to $S_{k+1}$ by adding a formula $n\in I$, we may have to remove terms of the form $(c_m,u)$ which are no longer correct, and we record these formulas in $V_{k+1}(n\in I)$.  At some $k'>k$, we might add $(c_n,u)$ for some $u\neq {?}$, and therefore we should remove the formula $n\in I$ to preserve correctness.  When we do this, we also return the formulas $V_{k+1}(n\in I)$ to $S_{k'+1}$.  If we did not do this, it would be possible for the process we describe below to get caught in a loop.

\subsection{The $H$-Process}

From here on, let $\mathrm{Cr}=\{Cr_0,\ldots,Cr_N\}$ be a fixed finite list of closed critical formulas.

\begin{definition}
  We say $S$ is solving if for each $I\leq N$, $\overline{S}\vDash Cr_I$.
\end{definition}

When $S$ is a correct nonsolving substitution, there are some (and possibly many) critical formulas $Cr_I$ which are not currently satisfied by $\overline{S}$; we wish to modify $S$ to satisfy one of them.  (Inevitably, doing so may cause formulas which were previously satisfied to become unsatisfied; the main work of this paper is showing that the resulting process eventually terminates.)  To do this, we identify, for each unsatisfied $Cr_I$, a pair $(e,v)$ which we could add to $S$ to satisfy that particular formula.  We will ultimately choose to add a pair minimizing $rk(e)$.

\begin{definition}
  Let $(S,P,V)$ be a historical substitution where $S$ is correct and nonsolving.

  For $I\leq N$ such that $\overline{S}\not\vDash Cr_I$, define a term $e^{S,P,V}_I$ depending on the type of the formula $Cr_I$:
  \begin{enumerate}
  \item If $Cr_I$ is a \textbf{Pred} axiom $\neg s=0\rightarrow\exists x\,s=Sx$ then
\[e^{S,P,V}_I=c_{\exists x\,|s|_{\overline{S}}=Sx} \text{ and } v^{S,P,V}_I=|s|_{\overline{S}}-1.\]
\item If $Cr_I$ is an \textbf{Epsilon} axiom $\phi(t)\rightarrow\exists x\, \phi(x,\vec s)$ then
\[e^{S,P,V}_I=c_{\exists x\,|\phi|_{\overline{S}}}(|\vec s|_{\overline {S}}) \text{ and } v^{S,P,V}_I=|t|_{\overline{S}}.\]
\item If $Cr_I$ is an \textbf{Induction} axiom of the form $\phi(0,\vec s)\wedge\neg\phi(t,\vec s)\rightarrow\exists x\,(\phi(x,\vec s)\wedge \neg\phi(Sx,\vec s))$ then
\[e^{S,P,V}_I=c_{\exists x\,(|\phi(x)|_{\overline{S}}\wedge\neg|\phi(Sx)|_{\overline{S}})}(|\vec s|_{\overline{S}})\]
and $v^{S,P,V}_I$ is the least $m<|t|_{\overline{S}}$ such that $\overline{S}\vDash\phi(m,\vec s)$ but $\overline{S}\vDash\neg\phi(m+1,\vec s)$.
\item If $Cr_I$ is an \textbf{Inductive Definition} axiom of the form $A(t,I)\rightarrow t\in I$ then
\[e^{S,P,V}_I=|t|_{\overline{S}}\in I\text{ and }v^{S,P,V}_I=\top.\]
\item If $Cr_I$ is a \textbf{Closure} axiom of the form $\forall x(A(x,\phi(x))\rightarrow\phi(x))\rightarrow\forall x(x\in I\rightarrow\phi(x))$ then let $n\in I$ be the first formula in $P$ such that $\overline{S}\vDash \neg\phi(n)$.  We consider two cases; if $\overline{S}\vDash\neg A(n,\phi(n))$ then we take
\[e^{S,P,V}_I=c_n\text{ and }v^{S,P,V}_I=\overline{S}(c_{\exists x\, |B(x,y,\phi)|_{\overline{S}}}(\vec s,n)).\]
Otherwise we take
\[e^{S,P,V}_I=c_{\exists x\, A(x,|\phi|_{\overline{S}})\wedge\neg|\phi(x)|_{\overline{S}}}(\vec s) \text{ and }v^{S,P,V}_I=n\]
where $\vec s$ is the closed terms appearing in $|\phi|_{\overline{S}}$.
  \end{enumerate}

For each $I$, let $r_I^{S,P,V}=rk(e^{S,P,V}_I)$.
\end{definition}


\begin{definition}
  When $(S,P,V)$ is a historical substitution with $S$ correct and nonsolving, we let $I(S,P,V)$ be the $I$ such that $r_I^{S,P,V}$ is least among $I\leq N$ with $\overline{S}\not\vDash Cr_I$ and $I(S,P,V)$ is least among such $I$.
\end{definition}

We now describe how we update the historical substitution $(S,P,V)$ to a new substitution $H(S,P,V)$.  We abbreviate $I(S,P,V)$ by $I$.  The two cases where $r_I^{S,P,V}=\Omega$ are more complicated (and the essential ones for the extension of the process to $ID_1$) so we describe them separately.

\begin{definition}
Let $(S,P,V)$ be a historical sequent with $S$ correct and nonsolving.  Let $I$ be such that $r_I^{S,P,V}$ is least among those $I$ with $\overline{S}\not\vDash Cr_I$ and $I$ is least among such indices.  We set $Cr(S,P,V)=Cr_I$, $r(S,P,V)=r^{S,P,V}_I$, $e(S,P,V)=e^{S,P,V}_I$, and $v(S,P,V)=v^{S,P,V}_I$.  We call $e(S,P,V)$ the \emph{$H$-expression} and $v(S,P,V)$ the \emph{$H$-value}.

If $r_I^{S,P,V}>\Omega$ then we set
\[H(S,P,V)=((S_{\leq r(S,P,V)}\setminus\{(e^{S,P,V}_I,?)\})\cup\{(e^{S,P,V}_I,v^{S,P,V}_I)\},P,V).\]

If $r_I^{S,P,V}<\Omega$ then we set
\[H(S,P,V)=((S_{\leq r(S,P,V)}\setminus\{(e^{S,P,V}_I,?)\}))\cup\{(e^{S,P,V}_I,v^{S,P,V}_I)\},\emptyset,\emptyset).\]
\end{definition}

We now turn to the case where $r_I^{S,P,V}=\Omega$.  In the positive $\Omega$ case we add a new $n\in I$ and must update $P$ and $V$ accordingly.
\begin{definition}
  If $r(S,P,V)=\Omega$ and $e^{S,P,V}_I$ is the positive $\Omega$ expression $n\in I$, we set
\[H(S,P,V)=(S_{<\Omega}\cup S^+_{=\Omega}\cup\{(e^{S,P,V}_I,\top)\},P{}^\frown\langle n\in I\rangle,V\cup\{(n\in I,S_{=\Omega}^{-})\}).\]
\end{definition}
As in the $r\neq \Omega$ case, we remove all expressions of higher rank.  We also remove negative expressions of rank $\Omega$---note that they may no longer be correct---but we record the expressions we have removed in $V$.

In the negative $\Omega$ case we have potentially invalidated some $(n\in I,\top)$.  We wish to do more than just remove this element; we wish to restore the rank $\Omega$ part of $S$ to the state it was at when we first added $(n\in I,\top)$.  (Note that we do not distinguish whether $Cr_I$ was a \textbf{Closure} axiom of an \textbf{Epsilon} axiom which happens to have a term of rank $\Omega$ as its main term.)
\begin{definition}\label{def:H_Omega_neg}
If $r(S,P,V)=\Omega$ and $e_I^{S,P,V}$ is the negative $\Omega$ expression $c_n$, we define:
\begin{itemize}
\item $P'$ is the longest initial segment of $P$ not containing $n\in I$,
\item $S'$ is those $(c_m,?)\in S$ and those $(m\in I,\top)$ such that $m\in I$ appears in $P'$,
\item if $P'{}^\frown\langle n\in I\rangle\sqsubseteq P$, $V'=V(n\in I)$,
\item if $P'=P$, $V'=S^{-}_{=\Omega}$.
\end{itemize}

We set
\[H(S,P,V)=(S_{<\Omega}\cup S'\cup V'\cup \{(c_n,v^{S,P,V}_I)\},P',V\upharpoonright P').\]
\end{definition}
To interpret this definition, note that adding a negative $\Omega$ expression may invalidate positive $\Omega$ expressions.  In particular, if $(n\in I,\top)$ appears in $S$, it conflicts with $(c_n,v^{S,P,V}_I)$ and must be removed; further, respecting the history $P,V$ requires assuming that any $m\in I$ added after $n\in I$ might depend on $n\in I$ to be justified, so these must be removed as well.  We also remove all $(c_m,?)$ other than those which must remain (because they are part of the justification of an $m\in I$ which comes before $n\in I$).  Finally, when we remove $n\in I$, we should restore all those negative expressions which the addition of $(n\in I,\top)$ caused us to remove, which is the addition of $V'$.

This gives us the \emph{$H$-process}:
\begin{itemize}
\item $(S_0,P_0,V_0)=(\emptyset,\emptyset,\emptyset)$,
\item $(S_{k+1},P_{k+1},V_{k+1})=\left\{\begin{array}{ll}
H(S_k,P_k,V_k)&\text{if }S_k\text{ is nonsolving}\\
(S_k,P_k,V_k)&\text{if }S_k\text{ is solving}
\end{array}\right..$
\end{itemize}

\begin{lemma}
Each $S_k$ is correct.
\end{lemma}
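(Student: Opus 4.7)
The plan is to proceed by induction on $k$. The base case $k = 0$ is trivial since $S_0 = \emptyset$ has no pairs to verify, and when $S_k$ is solving correctness is preserved automatically. When $S_k$ is nonsolving, I split by the four sub-cases of the $H$-definition: high rank ($r > \Omega$), low rank ($r < \Omega$), positive rank $\Omega$, and negative rank $\Omega$. In each case two things must be checked: (a) the newly inserted pair $(e,v)$ satisfies $\overline{S_{k+1}} \vDash F(e,v)$, and (b) every pair carried over from $S_k$ remains correct under the new substitution.

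For the non-$\Omega$ cases, (a) reduces to a case analysis on the five clauses defining $e^{S,P,V}_I$ and $v^{S,P,V}_I$: the hypothesis that $\overline{S_k} \not\vDash Cr_I$ directly supplies the statement $\overline{S_k} \vDash F(e,v)$ in each subcase (Pred, Epsilon, Induction, Inductive Definition, Closure), using minimality of the chosen witness where the definition of $F$ requires it. I then transfer this to $\overline{S_{k+1}}$ via Lemma \ref{thm:low_rank_correct}, using that $(S_{k+1})_{<r} = (S_k)_{<r}$. For (b), every surviving pair has rank $\le r$, and again Lemma \ref{thm:low_rank_correct} applies since the low-rank portion is untouched.

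The positive rank $\Omega$ case is similar in spirit: the new pair is $(n \in I, \top)$ and $F(n \in I, \top) = A(n, I)$ is true in $\overline{S_k}$ precisely because the unsatisfied critical formula is an Inductive Definition axiom. Since $I$ occurs positively here and $(S_{k+1})^-_{=\Omega} = \emptyset \subseteq (S_k)^-_{=\Omega}$, Lemma \ref{thm:low_rank_correct_Omega} gives truth in $\overline{S_{k+1}}$; the same lemma handles any surviving positive rank $\Omega$ pair from $S_k$, while low-rank pairs are handled by the low-rank lemma as before.

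The hard case, and the one I expect to be the main obstacle, is the negative rank $\Omega$ subcase. Correctness of the newly added pair $(c_n, v)$ itself should follow from Lemma \ref{thm:low_rank_correct_Omega} (negative clause), since $(S_{k+1})^+_{=\Omega} \subseteq (S_k)^+_{=\Omega}$ by the construction of $S'$. The delicate part is justifying the surviving positive $\Omega$ entries $(m \in I, \top)$ for $m \in I \in P'$ together with the restored negative $\Omega$ entries in $V'$, since these were originally known to be correct with respect to configurations whose rank $\Omega$ slice differs from that of $S_{k+1}$. The natural route is to strengthen the induction hypothesis with a history-compatibility invariant asserting that $V_k(n \in I)$ faithfully records the negative rank $\Omega$ pairs present just before $n \in I$ was inserted, so that $S_{k+1}$ literally reconstitutes the rank $\Omega$ slice of an earlier correct configuration onto which Lemma \ref{thm:low_rank_correct_Omega} can be applied in both directions. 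Formulating the precise invariant and verifying that it is preserved under all $H$-cases, especially the two $\Omega$-cases themselves, is where I expect the real work to lie.
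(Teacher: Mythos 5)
Your treatment of the base case, the non-$\Omega$ cases, and the positive $\Omega$ case matches the paper's argument, and your diagnosis of where the difficulty sits is accurate. But the negative rank $\Omega$ subcase with $(n\in I,\top)\in S_k$ is not actually proved: you end by saying that one should ``strengthen the induction hypothesis with a history-compatibility invariant'' and that formulating and verifying it ``is where I expect the real work to lie.'' That is a deferral of the essential step, not an execution of it. The restored pairs in $V'$ are pairs $(c_m,u)$ with $u\neq{?}$ that are present in $S_{k+1}$ but absent from $S_k$, so no comparison of $S_{k+1}$ with $S_k$ via Lemma \ref{thm:low_rank_correct_Omega} can certify them; some concrete identification of the earlier configuration they came from is required, and you have not supplied it.

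The paper closes this without strengthening the induction hypothesis at all. Let $j<k$ be maximal such that $e(S_j,P_j,V_j)=n\in I$, i.e.\ the step at which $n\in I$ was (last) added. The induction on $k$ already yields that $S_j$ is correct, since $j<k$. The ``history-compatibility'' facts you gesture at are then consequences of the definitions rather than a new invariant: the definition of the positive $\Omega$ step records $V_{j+1}(n\in I)=(S_j)^-_{=\Omega}$, a separate easy induction shows $P_j\sqsubseteq P_{j'}$ for all $j\le j'\le k$ (so $P_j=P_{k+1}$), and unwinding Definition \ref{def:H_Omega_neg} gives $(S_{k+1})_{=\Omega}=(S_j)_{=\Omega}\cup\{(c_n,v(S_k,P_k,V_k))\}\cup S'$ where $S'$ consists of pairs $(c_m,?)$ with $(m\in I,\top)\in S_j$. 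With that identity in hand, correctness of each $(e,u)\in S_{k+1}$ follows from correctness of $S_j$ (for the rank $\Omega$ part) and of $S_k$ (for the low-rank part) by Lemmas \ref{thm:low_rank_correct} and \ref{thm:low_rank_correct_Omega}, exactly as in the case $(n\in I,\top)\notin S_k$. So your proposed route is the right one, but the lemma is only proved once the stage $j$ is pinned down and the displayed identity for $(S_{k+1})_{=\Omega}$ is verified; as written, the argument stops short of that.
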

\begin{proof}
By induction on $k$.  $S_0$ is trivially correct.

Suppose $S_k$ is correct.  First, suppose $r(S_k,P_k,V_k)\neq\Omega$.  If $(e,u)\in S_k$ then the result follows from the correctness of $S_k$, the fact that $(S_k)_{<r(S_k)}=(S_{k+1})_{<r(S_k)}$, and Lemma \ref{thm:low_rank_correct}.

If $(e,u)\in S_{k+1}\setminus S_k$ then $e=e(S_k,P_k,V_k)$, $Cr(S_k,P_k,V_k)$ was a \textbf{Pred}, \textbf{Epsilon}, \textbf{Induction} axiom or the second case of a \textbf{Closure} axiom, and the definition of $e(S_k,P_k,V_k)$ and the fact that $(S_k)_{<r(S_k)}=(S_{k+1})_{<r(S_k)}$ ensures that $\overline{S_{k+1}}\vDash F(e,u)$.

So suppose $rk(S_k,P_k,V_k)=\Omega$.  If $(e,u)\in S_{k+1}$ with $rk(e)<\Omega$ then $(e,u)\in S_k$ and the result again follows from the correctness of $S_k$, the fact that $(S_k)_{<r(S_k)}=(S_{k+1})_{<r(S_k)}$, and Lemma \ref{thm:low_rank_correct}.

If $e(S_k,P_k,V_k)$ is a positive $\Omega$ expression then $(S_{k+1})^-_{=\Omega}=\emptyset$, so we need only consider $(S_{k+1})^{+,\mathrm{form}}_{=\Omega}$.  If $(e,u)\in(S_{k+1})^{+,\mathrm{form}}_{=\Omega}$ then $\overline{S_{k+1}}\vDash F(e,u)$ because $(S_{k+1})^-_{=\Omega}=\emptyset$.

Suppose $e(S_k,P_k,V_k)$ is a negative $\Omega$ expression $c_n$.  The case where $(n\in I,\top)\not\in S_k$ is much like the case where the rank is not $\Omega$.  If $(e,u)\in (S_{k+1})^-_{=\Omega}$ then $\overline{S_k}\vDash F(e,u)$, and since $(S_{k+1})^+_{=\Omega}=(S_k)^+_{=\Omega}$ we have $\overline{S_{k+1}}\vDash F(e,u)$ by Lemma \ref{thm:low_rank_correct_Omega}.  If $(e,u)\in (S_{k+1})^+_{=\Omega}$ then either $u={?}$ or $e$ is $m\in I$; since $m\neq n$, $(S_{k+1})^-_{=\Omega}=(S_k)^-_{=\Omega}\cup\{(c_n,v(S_k,P_k,V_k))\}$, and $\overline{S_k}\vDash F(e,u)$, also $\overline{S}_{k+1}\vDash F(e,u)$.

Finally, consider the case where $e(S_k,P_k,V_k)$ is a negative $\Omega$ expression $c_n$ and $(n\in I,\top)\in S_k$.  Let $j<k$ be maximal so that $e(S_j,P_j,V_j)$ is $n\in I$---that is, the step from $S_j$ to $S_{j+1}$ was the step at which $n\in I$ was added.  By a straightforward induction, $P_j$ must be an initial segment of each $P_{j'}$ for $j\leq j'\leq k$, and therefore $P_j=P_{k+1}$.  By the inductive hypothesis, $S_j$ is correct.  Note that $(S_{k+1})_{=\Omega}=(S_j)_{=\Omega}\cup\{(c_n,v(S_k,P_k,V_k))\}\cup S'$ where $S'$ consists of some pairs $(c_m,?)$ where $(m\in I,\top)\in S_j$.  Then for each $(e,u)\in S_{k+1}$, $\overline{S_{k+1}}\vDash F(e,u)$ as in the previous case.
\end{proof}

What remains is showing that the $H$-process eventually terminates: that is, that there is a $k$ with $S_{k+1}=S_k$, since such an $S_k$ is necessarily correct and solving.

\section{Deduction System}

When we defined the $H$-process, we defined it in terms of sequents $(S_k,P_k,V_k)$ where $S_k(e)$ is never ${?}$, because we worked with $\overline{S_k}$ rather than $S_k$.  However below it will be more useful to stay within $S$---in particular, to require that if $(e,?)$ is used in the computation of the $H(S,P,V)$ then $(e,?)$ is actually present in $S$.

\begin{definition}
  If $S$ is correct and nonsolving, we say the \emph{$H$-step applies} to $(S,P,V)$ if all calculations in the computation of $H(S,P,V)$ take place in $S$. 

  If the $H$-step applies to $(S,P,V)$, we say $e$ is \emph{active} at $(S,P,V)$ if $e\in\dom(S)$, $rk(e)=r(S,P,V)$, and either $e\not\in\dom(H(S,P,V))$ or the value of $e$ in $H(S,P,V)$ is different from the value in $S$.
\end{definition}
When we say ``all calculations in the computation of $H(S,P,V)$'', we include that if $e(S,P,V)=c_n$ then $n\in I$ must be in $\dom(S)$, and conversely if $e(S,P,V)=n\in I$ then $c_n$ must be in $\dom(S)$.  We also include that if $(c_n,?)\in S$ and $e(S,P,V)$ is a term of rank $\Omega$ then $n\in I\in\dom(S)$ (so that we know whether or not to include $(c_n,?)\in H(S,P,V)$.

Note that if the $H$-step applies to $(S,P,V)$ and $(S',P',V')$ is an extension of $(S,P,V)$ (i.e. $S'\supseteq S$, $P'\sqsupseteq P$, $V'\supseteq V$) and $S'$ is still correct then $(S',P',V')$ has the same $H$-expression and $H$-value as $(S,P,V)$.  Also, note that if the $H$-step applies to $(S,P,V)$ then $(e(S,P,V),?)\in S$.

When $r(S,P,V)\neq\Omega$, $e$ is active at $(S,P,V)$ iff $e=e(S,P,V)$.  When $r(S,P,V)=\Omega$, however, other rank $\Omega$ expressions can also be active: when $e(S,P,V)$ is a formula, all of $S^-_{=\Omega}$ is active, and when $e(S,P,V)$ is a term, many elements from $S^+_{=\Omega}$ may be active.

\begin{definition}
  A \emph{sequent} $\Theta=(S,P,V,F)$ is a historical sequence $(S,P,V)$ together with a function
\[F:\{e\in\dom(S)\mid S(e)={?}\text{ or }rk(e)=\Omega\}\rightarrow\{t,f\}.\]

When $\Theta=(S,P,V,F)$, we write $\Theta_S$ for $S$, $\Theta_P$ for $P$, and so on.  We write $\dom(\Theta)$ for $\dom(S)$.  We often write $(e,u)\in\Theta$ as an abbreviation for $(e,u)\in\Theta_S$.  We write $\Theta t=\{(e,u)\in\Theta_S\mid \Theta_F(e)=t\}$ and $\Theta f=\{(e,u)\in\Theta_S\mid \Theta_F(e)=f\}$.

If $\bowtie\in\{<,\leq,=,\geq,>\}$ then we say $\Theta\bowtie r$ iff for every $e\in\dom(\Theta)$, $rk(e)\bowtie r$.  We write $\Theta_{\bowtie r}$ for the sequent with
\begin{itemize}
\item $(\Theta_{\bowtie r})_S=(\Theta_S)_{\bowtie r}$,
\item $(\Theta_{\bowtie r})_P=\left\{\begin{array}{ll}
\Theta_P&\text{if }\Omega\bowtie r\\
\emptyset&\text{otherwise}
\end{array}\right.$,
\item $(\Theta_{\bowtie r})_V=\left\{\begin{array}{ll}
\Theta_V&\text{if }\Omega\bowtie r\\
\emptyset&\text{otherwise}
\end{array}\right.$,
\item $(\Theta_{\bowtie r})_F=\Theta_F\upharpoonright\dom(\Theta_{\bowtie r})$.
\end{itemize}
\end{definition}

We extend $H(S,P,V)$ to $H(S,P,V,F)$; the only new complication is defining how $F$ should be defined on new expressions appearing in $H(S,P,V)$ but not $S$---$F$ should be equal to $t$ on all such elements.
\begin{definition}
  We say the \emph{$H$-step applies} to $\Theta=(S,P,V,F)$ if the $H$-step applies to $(S,P,V)$.  We write $r(\Theta)$ for $r(S,P,V)$, and similarly for $e(\Theta)$ and $v(\Theta)$.  We define a sequent $H(\Theta)$ as follows:
\begin{itemize}
\item $H(\Theta)_S=H(S,P,V)_S$,
\item $H(\Theta)_P=H(S,P,V)_V$,
\item $H(\Theta)_V=H(S,P,V)_P$,
\item $H(\Theta)_F\supseteq\Theta_F\upharpoonright\dom(H(\Theta)_S)$, and if $e\in\dom(H(\Theta)_S)\setminus \dom(S)$ and either $H(\Theta)_S(e)={?}$ or $rk(e)=\Omega$ then $H(\Theta)_F(e)=t$.
\end{itemize}
$e$ is active at $\Theta$ if $e$ is active at $(S,P,V)$.
\end{definition}

It will be convenient to indicate adding a single element to a sequent by juxtaposition.  We need several cases to cover the different amounts of information needed depending on the form of the expression we are adding.
\begin{definition}
When $e$ is a canonical term, $rk(e)\neq\Omega$, $u\neq{?}$, and $e\not\in\dom(\Theta)$, we write $(e,u),\Theta$ for the sequent $\Theta'$ such that:
\begin{itemize}
\item $\Theta'_S=\Theta_S\cup\{(e,u)\}$,
\item $\Theta'_P=\Theta_P, \Theta'_V=\Theta_V, \Theta'_F=\Theta_F$.
\end{itemize}

When $e$ is a canonical expression, $i\in\{t,f\}$, and $e\not\in\dom(\Theta)$, we write $(e,?,i),\Theta$ for the sequent $\Theta'$ such that:
\begin{itemize}
\item $\Theta'_S=\Theta_S\cup\{(e,?)\}$,
\item $\Theta'_P=\Theta_P, \Theta'_V=\Theta_V$,
\item $\Theta'_F=\Theta_F\cup\{(e,i)\}$.
\end{itemize}

When $e$ is a canonical term with $rk(e)=\Omega$, $i\in\{t,f\}$, $u\neq{?}$, and $e\not\in\dom(\Theta)$, we write $(e,u,i),\Theta$ for the sequent $\Theta'$ such that:
\begin{itemize}
\item $\Theta'_S=\Theta_S\cup\{(e,u)\}$,
\item $\Theta'_P=\Theta_P, \Theta'_V=\Theta_V$,
\item $\Theta'_F=\Theta_F\cup\{(e,i)\}$.
\end{itemize}

If $e$ is an expression of the form $n\in I$ with $e\not\in\dom(\Theta)$, we say $P_e$ is \emph{proper} for $e,\Theta$ if:
\begin{itemize}
\item $P_e$ is a finite sequence of distinct formulas of the form $m\in I$,
\item if $m\in I$ is in $P_e$ and $(c_m,u)\in\Theta$ then $u={?}$,
\item $\Theta_P{}^\frown P_e$ is admissible,
\item the last element of $P_e$ is $e$, and
\item $\Theta_P$ and $P_e$ are disjoint.
\end{itemize}

When $e$ is an expression of the form $n\in I$, $i\in\{t,f\}$, $P_e$ is proper for $e,\Theta$, and $\dom(V_e)$ is exactly those $m\in I$ appearing in $P_e$, we write $(e,\top,i,P_e,V_e)$ for the sequence $\Theta'$ such that:
\begin{itemize}
\item $\Theta'_S=\Theta_S\cup\{(m\in I,\top)\mid m\in I\text{ appears in }P_e\}$,
\item $\Theta'_P=\Theta_P{}^\frown P_e$,
\item $\Theta'_V=\Theta_V\cup V_e$,
\item $\Theta'_F=\Theta_F\cup\{(m\in I,i)\mid m\in I\text{ appears in }P_e\}$.
\end{itemize}
\end{definition}
When we write $(e,u),\Theta$ (or any of its variants), we always assume that $e\not\in\dom(\Theta)$.

We now define a deduction system for our sequents.  The axioms and inference rules are as follows.

\begin{itemize}
\item $AxF$: $\Theta$ is an instance of $AxF$ if $\Theta_S$ is computationally inconsistent,
\item $AxS$: $\Theta$ is an instance of $AxS$ if $\Theta_S$ is solving,
\item $AxH_{e,v}$: $\Theta$ is an instance of $AxH_{e,v}$ if the $H$-step applies to $\Theta$ with $e=e(\Theta)$, and $v=v(\Theta)$, and there is an active $e'\in\dom(\Theta)$ with $\Theta_F(e')=f$,
\item $Cut_e$: When $e$ is a canonical term with $rk(e)\neq\Omega$,

\begin{prooftree*}
    \Hypo{(e,?,f),\Theta}
    \Hypo{\ldots\hspace{-.7em}}
    \Hypo{(e,u),\Theta}
    \Hypo{\hspace{-.7em}\ldots}
    \Hypo{(u\in\mathbb{N})}
    \Infer5{\Theta}
  \end{prooftree*}

\item $CutFr_e$: When $e$ is a canonical term with $rk(e)\neq\Omega$,

\begin{prooftree*}
    \Hypo{(e,?,t),\Theta}
    \Hypo{\ldots\hspace{-.7em}}
    \Hypo{(e,u),\Theta}
    \Hypo{\hspace{-.7em}\ldots}
    \Hypo{(u\in\mathbb{N})}
    \Infer5{\Theta}
  \end{prooftree*}

\item $Cut^{\Omega,\mathrm{term}}_e$: When $e$ is a canonical term with $rk(e)=\Omega$,

\begin{prooftree*}
    \Hypo{(e,?,f),\Theta}
    \Hypo{\ldots\hspace{-.7em}}
    \Hypo{(e,u,f),\Theta}
    \Hypo{\hspace{-.7em}\ldots}
    \Hypo{(u\in\mathbb{N})}
    \Infer5{\Theta}
  \end{prooftree*}

\item $CutFr^{\Omega,\mathrm{term}}_e$: When $e$ is a canonical term with $rk(e)=\Omega$,

\begin{prooftree*}
    \Hypo{(e,?,t),\Theta}
    \Hypo{\ldots\hspace{-.7em}}
    \Hypo{(e,u,f),\Theta}
    \Hypo{\hspace{-.7em}\ldots}
    \Hypo{(u\in\mathbb{N})}
    \Infer5{\Theta}
  \end{prooftree*}

\item $Cut^{\Omega,\mathrm{form}}_e$: When $e$ is a positive rank $\Omega$ formula,

  \begin{prooftree*}
    \Hypo{(e,?,f),\Theta}
    \Hypo{\ldots\hspace{-.7em}}
    \Hypo{(e,\top,f,P_e,V_e),\Theta}
    \Hypo{\hspace{-.7em}\ldots}
    \Hypo{(P_e\text{ is proper for }e,\Theta)}
    \Infer5{\Theta}
  \end{prooftree*}





\item $Fr_e$:

  \begin{prooftree*}
    \Hypo{(e,?,t),\Theta}
    \Infer1{\Theta}
  \end{prooftree*}

\item $H_{e,v}$: 

  \begin{prooftree*}
    \Hypo{H((e,?,t),\Theta)}
    \Infer1{(e,?,t),\Theta}
  \end{prooftree*}

where the $H$-step applies to $(e,?,t),\Theta$ with $e=e(\Theta)$, and $v=v(\Theta)$, and for every $e'$ active in $(e,?,t),\Theta$, $((e,?,t),\Theta)_F(e')=t$.  If $(e',u)\in H((e,?,t),\Theta)\setminus ((e,?,t),\Theta)$ then $H((e,?,t),\Theta)_F(e)=t$.

\end{itemize}

The various $Cut$ and $CutFr$ rules share a common form, so we could combine all three rules into a single rule stated slightly more abstractly; however, for clarity, we have stated the three cases separately instead.


A deduction is a well-founded tree constructed according to the inference rules above.
\begin{definition}
If $\mathcal{S}$ is a set of sequents, a \emph{deduction} from $\mathcal{S}$ is defined inductively:
\begin{itemize}
\item If $\Theta\in\mathcal{S}$ then $\Theta$, by itself, is a deduction of $\Theta$ from $\mathcal{S}$,
\item If $\Theta$ is an axiom then $\Theta$, by itself, is a deduction of $\Theta$ from $\mathcal{S}$,
\item If $\mathrm{I}$ is an inference rule 

  \begin{prooftree*}
    \Hypo{\cdots}
    \Hypo{\Theta_i}
    \Hypo{\cdots}
    \Hypo{(i\in\mathcal{I})}
    \Infer4{\Theta}    
  \end{prooftree*}

and for each $i\in\mathcal{I}$, $d_i$ is a deduction of $\Theta_i$ from $\mathcal{S}$ then $\mathrm{I}(\{d_i\}_{i\in\mathcal{I}})$ is a deduction of $\Theta$ from $\mathcal{S}$.
\end{itemize}

A \emph{derivation} is a deduction from $\emptyset$.
\end{definition}

We generally view deductions starting at the root and moving towards the leaf: our goal is to obtain a derivation of $\emptyset$ consisting of only $Fr$ and $H$ inferences, since, as we will show below, such a derivation will prove that the $H$-process terminates.  We will start with a derivation of $\emptyset$ consisting entirely of $Cut$ (or its variants, $Cut^{\Omega,\mathrm{term}}, Cut^{\Omega,\mathrm{form}}$), so as we move away from the root we consider all possible values that an expression $e$ could be assigned.

These rules enforce that when $\Theta_F(e)=f$, $e$ is ``fixed''---as we move further away from the root, a value $(e,?)$ with $\Theta_F(e)={?}$ will never change.  On the other hand when $\Theta_F(e)=t$, we could encounter an $H$ inference which removes $(e,?)$ and replaces it with $(e,v)$.

  In any rule $X_e$, we refer to $e$ as the \emph{main expression} of the rule.  If $d$ is a deduction ending in a $Cut_e$ or $CutFr_e$ rule (or an $\cdot^\Omega$ variant), we generally refer to the immediate subdeduction of $(e,u),\Theta$ as $d_u$.

\section{A Starting Derivation}

The construction of the original derivation in this section is the same as in \cite{MintsTupailoBuchholz1996} for $\epsilon$-terms $e$, the step described in Lemma \ref{thm:computeterm}.  In Lemma \ref{thm:computeformula} we use $Cut^{\Omega,\mathrm{form}}_e$ inferences to decide formulas $n\in I$.

Essentially, we attempt to evaluate the critical formulas; if our sequent is not already an axiom then it must be computationally consistent, non-solving, and the $H$-rule does not apply, so we will find canonical expressions which have not been assigned values.  We apply a cut over some canonical expression appearing in our critical formulas, and repeat the process for every premise of the cut.  In order to show that the process halts, we always choose canonical subexpressions of formulas having the maximum possible rank.  A small complication is presented by the fact that the evaluation of formulas in this way could lead us to something ill-founded if we insist on evaluating everything (for instance, if the formula $A(n,I)$ includes $n+1\in I$ as a subformula, we cannot expect to have a well-founded tree consisting of $Cut$-inferences where all $AxH$ inferences are actually computing).

This construction depends on the fixed set of critical formulas $\mathrm{Cr}=\{Cr_0,\ldots,Cr_N\}$.

\begin{definition}
  For any sequent $\Theta$ and any set of formulas $C$, let 
\[F(\Theta,C)=C\cup\{F(e,u)\mid (e,u)\in\Theta_S,\, e\text{ is a term}\}.\]

Define:
\begin{itemize}
\item $\rho(\Theta,C)=\max\{rk_s(|\phi|_{\Theta_S})\mid \phi\in F(\Theta,C)\}$,
\item $d(\phi)$ is the total number of formulas $t\in I$ and Skolem terms appearing in $\phi$,
\item $d_r(\phi)=\left\{\begin{array}{ll}
      d(\phi)&\text{if }rk_s(\phi)=r\\
      0&\text{otherwise}\end{array}\right.$
\item $\nu(\Theta,C)=\omega\cdot \rho(\Theta)+\#_{\phi\in F(\Theta,C)}d_{\rho(\Theta,C)}(|\phi|_{\Theta_S})$.
\end{itemize}
\end{definition}
Note that if $S\subseteq S'$ then $d(|\phi|_{S'})\leq d(|\phi|_S)$, and if $e$ is a canonical expression appearing in $|\phi|_S$ and $e\in\dom(S')$ then $d(|\phi|_{S'})<d(|\phi|_S)$.  Therefore the quantity $\nu(\Theta)$ describes the highest rank appearing in $F(\Theta)$ (the $\omega\cdot\rho(\Theta)$ term) together with the number of expressions we might have to add to $\Theta$ before we can reduce that rank.

\begin{lemma}
  If $\Theta$ is a sequent, $\phi\in F(\Theta,C)$ with $rk_s(|\phi|_{\Theta_S})=\rho(\Theta,C)$, and $e$ is a canonical term appearing in $|\phi|_{\Theta_S}$ with $rk(e)\neq\Omega$, then there is a deduction of $\Theta$ from sequents $\Theta'$ with $\nu(\Theta',C)<\nu(\Theta,C)$ consisting entirely of $Cut$-inferences of rank $\leq\Omega+\rho(\Theta,C)$.
  \label{thm:computeterm}
\end{lemma}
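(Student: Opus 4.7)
The plan is to derive $\Theta$ by a single $Cut_e$ inference with main expression $e$, producing premises $(e,?,f),\Theta$ and $(e,u),\Theta$ for each $u\in\mathbb{N}$. Because $rk(e)\neq\Omega$, the cut rank is either $rk_s(e)$ (when $e$'s defining formula avoids $I$) or $\Omega+rk_s(e)+1$ (when it contains $I$); since $e$ appears in $|\phi|_{\Theta_S}$ we have $rk_s(e)\leq\rho(\Theta,C)$, giving the bound $\leq\Omega+\rho(\Theta,C)$ immediately in the first case, and in the second case provided $e$ is chosen with $rk_s(e)<\rho(\Theta,C)$.

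For each premise $\Theta'$, I would verify $\nu(\Theta',C)<\nu(\Theta,C)$ as follows. Since $e\in\dom(\Theta')$ in every premise, the monotonicity remark following the definition of $d$ yields $d(|\phi|_{\Theta'_S})<d(|\phi|_{\Theta_S})$ strictly, and $d(|\psi|_{\Theta'_S})\leq d(|\psi|_{\Theta_S})$ for every other $\psi\in F(\Theta,C)$. The only new formula in $F(\Theta',C)\setminus F(\Theta,C)$, appearing in the numeric premises, is $F(e,u)$; since the defining formula of $e$ has strictly smaller simple rank than $e$ itself and $\vec{t}$ consists of numerals, $rk_s(F(e,u))<rk_s(e)\leq\rho(\Theta,C)$, so this new formula contributes zero to the $d_{\rho(\Theta,C)}$-summation.

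A case split on whether $\rho(\Theta',C)$ has dropped then finishes the argument: if $\rho(\Theta',C)<\rho(\Theta,C)$, the $\omega\cdot\rho$ term of $\nu$ decreases outright; otherwise $\rho$ is preserved and one must show the $d_{\rho(\Theta,C)}$-summation strictly decreases. The hard part will be this last step, which relies on the convention spelled out in the paragraph preceding the lemma of choosing $e$ of maximum possible rank among canonical subterms of $|\phi|_{\Theta_S}$, so that $rk_s(e)=\rho(\Theta,C)$ and the substitution $e\mapsto u$ removes an occurrence actually counted by $d_{\rho(\Theta,C)}(|\phi|_{\Theta_S})$. The accompanying technical point is to verify that the cascade of further reductions triggered by $e\mapsto u$ cannot re-inflate the rank-$\rho$ count: any newly-canonical Skolem term exposed by the cascade arises from a non-canonical term already present in $|\phi|_{\Theta_S}$, whose outermost Skolem function has $rk_s\leq\rho$, so no new rank-$\rho$ canonical occurrences are introduced beyond those already counted.
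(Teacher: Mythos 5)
Your proposal follows the paper's proof essentially verbatim: a single $Cut_e$ inference, the observation that $F(((e,u),\Theta),C)=F(\Theta,C)\cup\{F(e,u)\}$ with $rk_s(F(e,u))<rk_s(e)\leq\rho(\Theta,C)$, monotonicity of $d$ under extension of the substitution, and a case split on whether $\rho$ drops. One remark: the step you single out as ``the hard part'' in your last paragraph is already settled by your second paragraph, because $d_{\rho(\Theta,C)}$ is the \emph{total} count $d$ gated on $rk_s(|\phi|_{\Theta_S})=\rho(\Theta,C)$ (which holds by hypothesis), not a count of rank-$\rho$ occurrences; hence $d_{\rho(\Theta,C)}(|\phi|_{\Theta'_S})\leq d(|\phi|_{\Theta'_S})<d(|\phi|_{\Theta_S})=d_{\rho(\Theta,C)}(|\phi|_{\Theta_S})$ for \emph{any} canonical $e$ occurring in $|\phi|_{\Theta_S}$, and no maximal-rank choice of $e$ is needed (nor is it hypothesized in the lemma). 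Your caveat about the cut-rank bound when $e$'s defining formula contains $I$ is a fair observation that the paper glosses over, but it is harmless downstream, where only ``rank $\leq\Omega+r$ for some finite $r$'' is used.
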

\begin{proof}
  We deduce $\Theta$ from sequents $(e,u),\Theta$ using a $Cut_e$-inference.  It suffices to show that $\nu(((e,u),\Theta),C)<\nu(\Theta,C)$.

  Observe that $F(((e,u),\Theta),C)=F(\Theta,C)\cup\{F(e,u)\}$.  If $\rho(((e,u),\Theta),C)<\rho(\Theta,C)$ then certainly $\nu(((e,u),\Theta),C)<\nu(\Theta,C)$.  Suppose $\rho(((e,u),\Theta),C)<\rho(\Theta,C)$; then since $rk_s(F(e,u))<rk_s(e)\leq\rho(\Theta,C)$, $F(e,u)$ does not contribute to $\nu((e,u),\Theta)$.  For each $\psi\in F(\Theta,C)$, we have $rk_s(|\psi|_{((e,u),\Theta)_S})\leq rk_s(|\psi|_{\Theta_S})$, and if $rk_s(|\psi|_{((e,u),\Theta)_S})= rk_s(|\psi|_{\Theta_S})$ then for every subterm in $|\psi|_{((e,u),\Theta)_S}$ there is a corresponding term with rank at least as high in.  Moreover, $d(|\phi|_{((e,u),\Theta)_S}|)<d(|\phi|_{\Theta_S})$, so $\nu(((e,u),\Theta),C)<\nu(\Theta,C)$.
\end{proof}

\begin{lemma}
  If $\Theta$ is a sequent, $\phi\in F(\Theta,C)$, and $e$ is a canonical formula appearing in $|\phi|_{\Theta_S}$ then there is a deduction of $\Theta$ from sequents $\Theta'$ with $\nu(\Theta',C)\leq\nu(\Theta,C)$ consisting entirely of $Cut$-inferences of rank $\leq\Omega$.  Furthermore, if $rk_s(|\phi|_{\Theta_S})=\rho(\Theta,C)$ then the deduction of $\Theta$ is from sequents $\Theta'$ with $\nu(\Theta',C)<\nu(\Theta,C)$.
  \label{thm:computeformula}
\end{lemma}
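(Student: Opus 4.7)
The plan is to deduce $\Theta$ from its premises via a single application of the $Cut^{\Omega,\mathrm{form}}_e$ inference with main expression $e$. Since $e$ is a canonical formula of the form $n\in I$, its rank is $\Omega$, so the inference has rank $\leq\Omega$ as required. The premises are $(e,?,f),\Theta$ together with $(e,\top,f,P_e,V_e),\Theta$ for every pair $(P_e,V_e)$ with $P_e$ proper for $e,\Theta$ and $\dom(V_e)$ equal to the formulas in $P_e$. All the work is then in verifying the $\nu$-bounds for each premise.

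Let $\Theta'$ denote any one of these premises. The key observation is that every expression added in passing from $\Theta$ to $\Theta'$ is a formula $m\in I$ (namely $e$ itself together with the formulas listed in $P_e$), never a term. Consequently, by the definition $F(\Theta,C)=C\cup\{F(e,u)\mid (e,u)\in\Theta_S,\, e\text{ is a term}\}$, we have $F(\Theta',C)=F(\Theta,C)$ as a set of formulas. For any $\psi\in F(\Theta,C)$, the reduction $|\psi|_{\Theta'_S}$ is obtained from $|\psi|_{\Theta_S}$ by replacing each occurrence of $m\in I$ (for $m\in I$ newly present in $\dom(\Theta'_S)$) with either $\top$ or $\bot$ according to the clauses of the reduction definition; in particular $rk_s(|\psi|_{\Theta'_S})\leq rk_s(|\psi|_{\Theta_S})$ and $d(|\psi|_{\Theta'_S})\leq d(|\psi|_{\Theta_S})$. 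For the distinguished $\psi=\phi$, the hypothesis that $e$ actually appears in $|\phi|_{\Theta_S}$ guarantees strict decrease: $d(|\phi|_{\Theta'_S})<d(|\phi|_{\Theta_S})$, since the occurrence of $e$ that was counted in $d(|\phi|_{\Theta_S})$ has been collapsed to a truth value.

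The case analysis on $\rho$ finishes the argument. If $\rho(\Theta',C)<\rho(\Theta,C)$, then $\nu(\Theta',C)<\nu(\Theta,C)$ trivially, so assume $\rho(\Theta',C)=\rho(\Theta,C)=r$. Then for every $\psi\in F(\Theta,C)$, $d_r(|\psi|_{\Theta'_S})\leq d_r(|\psi|_{\Theta_S})$, which gives $\nu(\Theta',C)\leq\nu(\Theta,C)$, proving the main claim. For the \emph{furthermore} clause, the hypothesis $rk_s(|\phi|_{\Theta_S})=\rho(\Theta,C)=r$ forces $\phi$ to contribute to the $d_r$-sum at $\Theta$; since either $d(|\phi|_{\Theta'_S})<d(|\phi|_{\Theta_S})$ (when $rk_s(|\phi|_{\Theta'_S})=r$) or $d_r(|\phi|_{\Theta'_S})=0$ (when $rk_s(|\phi|_{\Theta'_S})<r$), in either subcase the $\phi$-summand strictly decreases while no other summand increases, yielding $\nu(\Theta',C)<\nu(\Theta,C)$.

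The only mild subtlety to check is that the $Cut^{\Omega,\mathrm{form}}_e$ rule indeed has $\Theta$ as a legal conclusion, which requires no precondition beyond $e\not\in\dom(\Theta)$ (if $e\in\dom(\Theta)$ no cut is needed, since the premises already contain the information, and one may take the trivial deduction). The monotonicity underlying the inequalities on $rk_s$ and $d$ is a straightforward induction on the structure of formulas and terms, essentially identical in spirit to the analysis done in Lemma \ref{thm:computeterm}, with the additional observation that in the $(e,\top,f,P_e,V_e)$ case, none of the additional $m\in I$'s can increase any $|\psi|_{\Theta_S}$ either—so the bookkeeping is uniform across premises.
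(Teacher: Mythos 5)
Your proposal is correct and follows the same route as the paper: a single $Cut^{\Omega,\mathrm{form}}_e$ inference, with the $\nu$-bounds verified exactly as in Lemma \ref{thm:computeterm} (the paper's proof is just these two sentences, deferring the bookkeeping to the previous lemma, which you spell out). The only superfluous part is the $e\in\dom(\Theta)$ caveat: since $e$ is a canonical formula appearing in $|\phi|_{\Theta_S}$, the reduction clauses force $e\not\in\dom(\Theta_S)$, so that case never arises.
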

\begin{proof}
We deduce $\Theta$ from sequents $(e,?,f),\Theta$ and $(e,\top,f,P_e,V_e),\Theta$ using a $Cut_e^{\Omega,\mathrm{form}}$-inference.  The claim follows as in the previous lemma.
\end{proof}

\begin{lemma}
  If $\Theta$ is a sequent, $\phi\in F(\Theta,C)$ with $rk_s(|\phi|_{\Theta_S})=\rho(\Theta)$, and $e$ is a canonical term appearing in $|\phi|_{\Theta_S}$ with $rk(e)=\Omega$, then there is a deduction of $\Theta$ from sequents $\Theta'$ with $\nu(\Theta',C)<\nu(\Theta,C)$ consisting entirely of $Cut$-inferences of rank $\leq\rho(\Theta,C)$.
  \label{thm:computeterm_Omega}
\end{lemma}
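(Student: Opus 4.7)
The plan is to proceed exactly as in Lemma \ref{thm:computeterm}, but using the $Cut^{\Omega,\mathrm{term}}_e$ inference in place of $Cut_e$. Since $rk(e)=\Omega$, the term $e$ has the form $c_n = c_{\exists x\,\neg B(x,y,I)}(n)$ for a numeral $n$, and hence $F(e,u)=\neg B(u,n,I)$ when $u\neq{?}$, built from the quantifier-free $\mathcal{L}_0$-formula $B$. I would apply $Cut^{\Omega,\mathrm{term}}_e$ to deduce $\Theta$ from the premises $(e,?,f),\Theta$ and $(e,u,f),\Theta$ for each $u\in\mathbb{N}$; the rank of this cut is bounded by $\rho(\Theta,C)$ because the associated critical formula $F(e,u)$ has simple rank $0$, while $\rho(\Theta,C)\geq rk_s(|\phi|_{\Theta_S})\geq 0$.

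For each such premise $\Theta'$, the task is to verify $\nu(\Theta',C)<\nu(\Theta,C)$, as in the previous lemma. Since $F(\Theta',C)\subseteq F(\Theta,C)\cup\{F(e,u)\}$ and $F(e,u)$ has $rk_s\leq 0$, the newly added formula does not contribute to $\rho$; further reductions of existing formulas $\psi\in F(\Theta,C)$ can only decrease simple rank, so $\rho(\Theta',C)\leq\rho(\Theta,C)$. If this inequality is strict we are done immediately.

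Otherwise, when $\rho(\Theta',C)=\rho(\Theta,C)$, the key point is that $e$ is a canonical Skolem term that appears in $|\phi|_{\Theta_S}$ and is counted in $d(|\phi|_{\Theta_S})$. Adding $e$ to the domain causes $|\phi|_{\Theta'_S}$ to replace $e$ by a numeral—either $0$ in the case $u={?}$ or by $u$ itself—so that $e$ no longer occurs as a canonical Skolem subterm of $|\phi|_{\Theta'_S}$, strictly reducing its $d$-value (whether or not $rk_s(|\phi|_{\Theta'_S})$ actually drops). All other formulas $\psi\in F(\Theta,C)$ satisfy $d_{\rho(\Theta,C)}(|\psi|_{\Theta'_S})\leq d_{\rho(\Theta,C)}(|\psi|_{\Theta_S})$, yielding the desired strict decrease in $\#_\psi d_{\rho(\Theta,C)}(|\psi|_{\Theta'_S})$ and hence in $\nu$.

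The main thing to confirm is the rank bound on the cut: this relies on the observation that the critical formula $F(c_n,u)=\neg B(u,n,I)$ lives in $\mathcal{L}_0$ and so has simple rank $0$, reflecting the distinguished role of the defining formula $B$ in the assignment of rank $\Omega$ to $c_n$. Apart from this verification, the argument is the routine adaptation of Lemma \ref{thm:computeterm} to the rank-$\Omega$ term case.
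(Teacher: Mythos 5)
Your argument for the decrease of $\nu$ is essentially the right one, but your handling of the rank bound contains a genuine error. By the paper's definition, the rank of a $Cut^{\Omega,\mathrm{term}}_e$ inference is the rank of its \emph{main expression} $e$, and here $rk(e)=\Omega$ by hypothesis; it is not bounded by the finite number $\rho(\Theta,C)$, and the simple rank of the critical formula $F(e,u)$ is irrelevant to this question. You have conflated $rk$ with $rk_s$. The bound ``$\leq\rho(\Theta,C)$'' in the statement appears to be a typo for ``$\leq\Omega$'' (compare Lemma \ref{thm:computeformula}, which asserts rank $\leq\Omega$, and the way the three lemmas are combined afterwards under the bound $\Omega+\rho$); the correct response is to observe that the cut introduced has rank exactly $\Omega$ and that the bound as literally stated cannot be met, not to manufacture a justification for it. Since you flagged this as ``the main thing to confirm,'' the confirmation you give is the weakest part of the proposal.

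On the combinatorial side you diverge from the paper, which after the $Cut^{\Omega,\mathrm{term}}_e$ inference additionally applies Lemma \ref{thm:computeformula} repeatedly in each branch, once for each $m\in I$ occurring in $F(e,u)$, so that the leaves $\Theta'$ have $|F(e,u)|_{\Theta'_S}$ free of canonical expressions. You skip this and argue directly that $\nu$ already drops at the premises of the single cut. That shortcut does go through: since $e=c_n$ has $rk_s(c_n)=1$ and occurs in $|\phi|_{\Theta_S}$, we have $\rho(\Theta,C)\geq 1$, while $F(c_n,u)$ is a Boolean combination of instances of the quantifier-free $\mathcal{L}_0$-formula $B$ and so has simple rank $0<\rho(\Theta,C)$; hence $d_{\rho(\Theta,C)}(|F(e,u)|_{\Theta'_S})=0$ and the new formula contributes nothing at the top rank, so the strict decrease coming from eliminating the occurrence of $e$ in $|\phi|$ carries the day. (The undecided subformulas $m\in I$ of $F(e,u)$ only begin to count once $\rho$ has dropped to $0$, at which point the $\omega\cdot\rho$ term has already absorbed the increase.) So the omission is harmless for the stated conclusion, but you should make the inequality $\rho(\Theta,C)\geq 1$ explicit, since it is exactly what protects you from the case the paper's extra step is guarding against.
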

\begin{proof}
  We deduce $\Theta$ from sequents $(e,u),\Theta$ using a $Cut_e$-inference.  For each $(e,u),\Theta$, we apply the previous lemma repeatedly (at most once for each $n\in I$ appearing in $F(e,u)$), to obtain leaves $\Theta'$ where $rk_s(|F(e,u)|_{\Theta'_S})=0$.  Then the claim follows as in Lemma \ref{thm:computeterm}.
\end{proof}

\begin{lemma}
  For any $\Theta$ and $C$, there is a deduction of consisting of $Cut$-inferences of rank $\leq \Omega+\rho(\Theta,C)$ from correct sequents $\Theta'$ such that for each $\phi\in F(\Theta,C)$, $\Theta'_S$ decides $\phi$.
\end{lemma}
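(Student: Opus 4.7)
The plan is to prove this by transfinite induction on the measure $\nu(\Theta, C)$, dovetailing the reductions provided by Lemmas~\ref{thm:computeterm},~\ref{thm:computeformula}, and~\ref{thm:computeterm_Omega}. The overall strategy is to repeatedly cut on a canonical subexpression of maximal rank inside some undecided formula of $F(\Theta, C)$, and stop when every formula in $F(\Theta, C)$ has been decided or the sequent has become computationally inconsistent.

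For the base case, suppose $\Theta_S$ already decides every $\phi \in F(\Theta, C)$. Then either $\Theta$ is correct, in which case the one-node deduction with $\Theta$ as its own leaf suffices, or $\Theta$ is computationally inconsistent, in which case $\Theta$ is an instance of $AxF$. The key observation making this dichotomy work for the term entries $(e,u) \in \Theta_S$ is that $F(e,u) \in F(\Theta, C)$ is then decided, so $\overline{\Theta}$ and $\Theta$ agree on $F(e,u)$, making non-correctness equivalent to ci on these entries.

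For the inductive step, pick $\phi \in F(\Theta, C)$ that $\Theta_S$ does not decide and has maximal simple rank, $rk_s(|\phi|_{\Theta_S}) = \rho(\Theta, C)$. Since $\phi$ is undecided, $|\phi|_{\Theta_S}$ contains at least one canonical expression $e$ not in $\dom(\Theta)$; choose such an $e$ of maximal rank. There are three cases: if $e$ is a term with $rk(e) \neq \Omega$ apply Lemma~\ref{thm:computeterm}; if $e$ is a canonical formula apply Lemma~\ref{thm:computeformula}; and if $e$ is a term with $rk(e) = \Omega$ apply Lemma~\ref{thm:computeterm_Omega}. In each case we obtain a deduction of $\Theta$ from premises $\Theta^*$ with $\nu(\Theta^*, C) < \nu(\Theta, C)$, using only $Cut$-inferences of rank at most $\Omega + \rho(\Theta, C)$. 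Since each $\Theta^*$ extends $\Theta$, the inclusion $F(\Theta, C) \subseteq F(\Theta^*, C)$ holds, so the inductive hypothesis applied to each $\Theta^*$ yields a deduction whose leaves decide $F(\Theta^*, C)$ and hence also $F(\Theta, C)$; splicing produces the required deduction of $\Theta$, with the cut rank preserved because $\rho(\Theta^*, C) \leq \rho(\Theta, C)$.

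The main obstacle I expect is the base case when $\Theta_S$ decides every $\phi \in F(\Theta, C)$ but $\Theta$ is neither correct nor ci. The problematic situation is a formula entry $(n \in I, \top) \in \Theta_S$ whose associated $F(n \in I, \top) = A(n, I)$ is not an element of $F(\Theta, C)$ and need not be decided by $\Theta$. Handling this cleanly requires either preprocessing $\Theta$ to decide all such $A(n, I)$ by extra applications of Lemma~\ref{thm:computeformula} and Lemma~\ref{thm:computeterm_Omega} before declaring a leaf, or absorbing these additional formulas into the working set $C$ during the recursion. Either way, the measure $\nu$ accommodates the additional cuts since $rk_s(A(n, I)) \leq \rho(\Theta, C)$ after the reductions have stabilised, and the rank bound $\Omega + \rho(\Theta, C)$ is not exceeded.
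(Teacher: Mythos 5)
Your main induction on $\nu(\Theta,C)$, alternating the three reduction lemmas until no canonical expression of maximal rank survives in any $|\phi|_{\Theta'_S}$, is exactly the paper's argument, and you have correctly isolated the one genuinely delicate point: a leaf $\Theta'$ that decides all of $F(\Theta,C)$ may still fail to be correct because of a formula entry $(n\in I,\top)$, since $F(n\in I,\top)=A(n,I)$ is deliberately excluded from $F(\Theta,C)$ (which only collects $F(e,u)$ for \emph{terms} $e$).

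However, both of your proposed resolutions of that point fail. Preprocessing so that every $A(n,I)$ with $(n\in I,\top)\in\Theta_S$ is decided is precisely the move the paper warns is ill-founded: deciding $A(n,I)$ forces a cut on $c_n$ and then on the formulas $m\in I$ occurring in $\neg B(u,n,I)$, each such cut can introduce a new entry $(m\in I,\top)$ whose own $A(m,I)$ you would then be obliged to decide, and (e.g.\ if $A(n,I)$ contains $n+1\in I$) this never terminates. Absorbing these formulas into $C$ has the same defect: each new $(m\in I,\top)$ enlarges $F(\Theta,C)$, so $\nu$ is not driven down and your assertion that ``the measure $\nu$ accommodates the additional cuts'' is unsupported. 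The correct resolution is that such leaves need no further work at all: if $(c_n,u)\in\Theta'_S$ with $u\neq{?}$, then $\Theta'$ is computationally inconsistent by the second clause of the definition of ci (a pair $(c_n,u)$, $u\neq{?}$, coexisting with $(n\in I,\top)$), hence an $AxF$ axiom; and if $c_n$ is absent or assigned ${?}$, then $|c_n|_{\overline{\Theta'_S}}=0$, so $F(n\in I,\top)$ reduces to $B(0,n,I)$, whose truth under $\overline{\Theta'_S}$ is guaranteed by the admissibility of $\Theta'_P$ --- which is enforced by the properness condition on $P_e$ in every $Cut^{\Omega,\mathrm{form}}_e$ inference --- together with the positivity of $I$ in $B$. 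So the entry is in fact correct, and no additional cuts are needed. Without invoking properness/admissibility and the ci clause for $(c_n,u)$ versus $(n\in I,\top)$, the base case of your induction does not close.
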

\begin{proof}
    By induction on $\nu(\Theta,C)$ and repeated applications of Lemma \ref{thm:computeterm}, Lemma \ref{thm:computeformula}, and Lemma \ref{thm:computeterm_Omega}, we obtain a deduction of $\Theta$ from sequents $\Theta'$ where, for every $\phi\in F(\Theta')$ with $rk_s(|\phi|_{\Theta'_S})=\rho(\Theta')$, there are no canonical expressions in $|\phi|_{\Theta'_S}$.  Such a sequent must have $|\phi|_{\Theta'_S}$ a formula in the language of Peano arithmetic for each $\phi\in F(\Theta,C)$, so $\Theta'_S$ decides each such $\phi$.  If $\Theta'$ is correct, we are done.

If $\Theta'$ is not correct, it must be because there is some $(e,u)\in\Theta'$ with $\overline{\Theta'_S}\vDash\neg F(e,u)$; if $F(e,u)\in F(\Theta,C)$ then $\Theta'_S$ decides $F(e,u)$, so $\Theta'_S\vDash \neg F(e,u)$, so $\Theta'$ is an instance of $AxF$.  Otherwise $e$ must be a formula $n\in I$ and $u=\top$.  If $c_n\in\dom(\Theta')$, if $\Theta'_S(c_n)\neq{?}$ then again $\Theta'$ is an instance of $AxF$.  We cannot have $\Theta'_S(c_n)={?}$ or $c_n\not\in\dom(\Theta')$: since $\Theta'_S$ decides $B(0,n,I)$ then properness of $\Theta'_P$ implies admissibility of $\Theta'_P$, which (together with the monotonicity of $B$) implies that $|B(0,n,I)|_{\Theta'_S}$ must be true, so $\Theta'_S\vDash F(e,u)$, contradicting the assumption that $\overline{\Theta'_S}\vDash\neg F(e,u)$.


\end{proof}

\begin{lemma}
  For any $\Theta$, there is a deduction of consisting of $Cut$-inferences of rank $\leq \Omega+\rho(\Theta,C)$ from correct sequents $\Theta'$ such that:
\begin{itemize}
\item for each $\phi\in F(\Theta,C)$, $\Theta'_S$ decides $\phi$.
\item if $Cr_I$ is an \textbf{Induction} axiom $\phi(0,\vec s)\wedge\neg\phi(t,\vec s)\rightarrow\exists x(\phi(x,\vec s)\wedge\neg\phi(Sx,\vec s))$ such that $\Theta'\vDash\neg Cr_I$ then for all $m\leq |t|_{\Theta'_S}+1$, $\Theta'_S$ decides $\phi(m,\vec s)$.
\end{itemize}
\end{lemma}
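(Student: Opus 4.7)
The plan is to bootstrap off the previous lemma twice: first to obtain leaves $\Theta'$ that decide $F(\Theta,C)$, and then, at each such leaf, to apply the previous lemma a second time after enlarging $C$ to include the extra induction subformulas required.

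First, I apply the previous lemma to $(\Theta,C)$ to get a deduction of $\Theta$ from correct leaves $\Theta'$ each deciding every $\phi\in F(\Theta,C)$, with all cuts of rank $\leq\Omega+\rho(\Theta,C)$. At any such $\Theta'$, if an induction axiom $Cr_I$ of the form $\phi_I(0,\vec s_I)\wedge\neg\phi_I(t,\vec s_I)\rightarrow\exists x(\phi_I(x,\vec s_I)\wedge\neg\phi_I(Sx,\vec s_I))$ is falsified, then $\neg\phi_I(t,\vec s_I)$ is a subformula of $Cr_I\in C$ and is decided by $\Theta'_S$; hence $|t|_{\Theta'_S}$ must be a numeral $n_I$. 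I then set
\[M_{\Theta'}=\{\phi_I(m,\vec s_I)\mid \Theta'\vDash\neg Cr_I,\ m\leq n_I+1\}\]
and apply the previous lemma again to $(\Theta',C\cup M_{\Theta'})$, splicing the resulting deduction onto the leaf $\Theta'$. This yields correct leaves $\Theta''$ that decide every formula of $F(\Theta',C\cup M_{\Theta'})$ and in particular every $\phi_I(m,\vec s_I)$ with $m\leq n_I+1$.

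For the rank bound, each $\phi_I(m,\vec s_I)$ is obtained from the subformula $\phi_I(x,\vec s_I)$ of $Cr_I$ by substituting a numeral, so $rk_s(\phi_I(m,\vec s_I))\leq rk_s(Cr_I)\leq\rho(\Theta,C)$; combined with the fact that reductions from $\Theta$ to $\Theta'$ do not increase simple rank, this gives $\rho(\Theta',C\cup M_{\Theta'})\leq\rho(\Theta,C)$, so the second round of cuts also has rank $\leq\Omega+\rho(\Theta,C)$.

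The step requiring the most care is verifying that a single additional application of the previous lemma suffices, i.e., that going from $\Theta'$ to $\Theta''$ neither introduces new failing induction axioms nor changes any of the relevant values $|t|_{\Theta'_S}$. Both are instances of monotonicity of the reduction $|{\cdot}|_{\cdot}$: once a term has been reduced to a numeral it remains that numeral under extension, and once a critical formula has been decided its truth value is preserved. These imply that any $Cr_I$ failing at $\Theta''$ was already failing at $\Theta'$ with the same value $n_I$, so the formulas $\phi_I(m,\vec s_I)$ needed at $\Theta''$ are exactly those already included in $M_{\Theta'}$, and hence are decided by $\Theta''_S$.
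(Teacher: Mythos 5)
Your overall strategy---apply the previous lemma to get leaves deciding $F(\Theta,C)$, then re-apply it with the instances $\phi_I(m,\vec s_I)$, $m\leq n_I+1$, added to the side set---is the same as the paper's. The only structural difference is that you batch all the falsified induction axioms into one set $M_{\Theta'}$ and claim a single extra round suffices, whereas the paper eliminates them one at a time by induction on the number of unfinished axioms. The batching is fine in principle, but your argument that one extra round is enough has a gap.

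The gap: you write that $\neg\phi_I(t,\vec s_I)$ ``is a subformula of $Cr_I\in C$,'' and your closing monotonicity argument (``any $Cr_I$ failing at $\Theta''$ was already failing at $\Theta'$ with the same value $n_I$'') silently relies on every leaf $\Theta'$ of the first round \emph{deciding} every induction critical formula. Neither the lemma statement nor your first application of the previous lemma --- which you make with the bare pair $(\Theta,C)$ --- guarantees this. Monotonicity of the reduction says that a formula, once decided, keeps its value under extension of the substitution; it does \emph{not} say that a formula undecided at $\Theta'$ stays undecided at $\Theta''$. So an induction axiom $Cr_I$ that $\Theta'$ fails to decide may become decided-and-false only at $\Theta''$; its instances $\phi_I(m,\vec s_I)$ were never put into $M_{\Theta'}$, and the second clause can fail at $\Theta''$. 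The paper forecloses this by making the first application of the previous lemma to $\Theta, C\cup\mathrm{Cr}$, so that every first-round leaf already decides every critical formula; with that one change your monotonicity argument is sound and your batched second round is interchangeable with the paper's one-at-a-time induction. (Your rank bound $rk_s(\phi_I(m,\vec s_I))\leq rk_s(Cr_I)\leq\rho(\Theta,C)$ likewise presupposes $Cr_I\in F(\Theta,C)$, i.e.\ the same missing inclusion of $\mathrm{Cr}$ into $C$.)
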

\begin{proof}
  By the previous lemma applied to $\Theta,C\cup \mathrm{Cr}$, there is a deduction of $\Theta$ from sequents satisfying the first clause.  So assume $\Theta$ already satisfies the first clause.  Call an \textbf{Induction} axiom $Cr_I$ \emph{unfinished} at $\Theta$ if the second clause does not hold for $Cr_I$.  We proceed by induction on the number of unfinished \textbf{Induction} axioms in $\Theta$; if there are none, we are finished.

  If $\phi(0,\vec s)\wedge\neg\phi(t,\vec s)\rightarrow\exists x(\phi(x,\vec s)\wedge\neg\phi(Sx,\vec s))$ is an unfinished axiom, the first assumption implies that $|t|_{\Theta_S}$ is a numeral $n$.  We apply the previous lemma to $\Theta, \{\phi(m,\vec s)\mid m\leq n+1\}$ to obtain a deduction of $\Theta$ from sequents satisfying the first clause.  Any axiom finished in $\Theta$ will be satisfied in all sequents $\Theta'$, so the claim follows by the inductive hypothesis.
\end{proof}

\begin{lemma}
  For any $\Theta$, there is a deduction of consisting of $Cut$-inferences of rank $\leq \Omega+\rho(\Theta,C)$ from sequents $\Theta'$ such that:
\begin{itemize}
\item for each $\phi\in F(\Theta,C)$, $\Theta'_S$ decides $\phi$.
\item if $Cr_I$ is an \textbf{Induction} axiom $\phi(0,\vec s)\wedge\neg\phi(t,\vec s)\rightarrow\exists x(\phi(x,\vec s)\wedge\neg\phi(Sx,\vec s))$ such that $\Theta'\vDash\neg Cr_I$ then for all $m\leq |t|_{\Theta'_S}+1$, $\Theta'_S$ decides $\phi(m,\vec s)$,
\item if $Cr_I$ is a \textbf{Closure} axiom $\forall x(A(x,\phi(x))\rightarrow \phi(x))\rightarrow\forall x(x\in I\rightarrow\phi(x))$ such that $\Theta'\vDash\neg Cr_I$ then, taking $n\in I$ to be the first formula in $P$ such that $\Theta'\vDash\neg\phi(n)$, for all $m\in I$ appearing before $n\in I$ in $P$ (including $n$), $\Theta'$ decides $\phi(m)$ and $A(m,\phi(m))$.
\end{itemize}
\end{lemma}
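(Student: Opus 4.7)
The plan is to mimic the proof of the previous lemma, iteratively eliminating unfinished \textbf{Closure} axioms. First I would apply the previous lemma with parameter $C \cup \mathrm{Cr}$, so that at each leaf of the resulting deduction the sequent satisfies clauses (1) and (2) and, crucially, decides every $Cr_I$. So assume henceforth that $\Theta$ already has these properties. Call a \textbf{Closure} axiom $Cr_I$ \emph{unfinished} at $\Theta$ if clause (3) fails for it, and induct on the number of unfinished \textbf{Closure} axioms at $\Theta$; when none remain, we are done.

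For the induction step, pick an unfinished $Cr_I$, say $\forall x(A(x,\phi)\rightarrow\phi(x))\rightarrow\forall x(x\in I\rightarrow\phi(x))$. Since $\Theta_S$ decides $Cr_I$ and clause (3) fails, $\Theta_S \vDash \neg Cr_I$; let $n$ be the first numeral such that $(n\in I)\in\Theta_P$ and $\Theta_S\vDash\neg\phi(n)$, and let $M$ be the finite set of numerals $m$ with $(m\in I)$ appearing in $\Theta_P$ at or before $(n\in I)$. Apply the previous lemma again, this time with parameter $C\cup\mathrm{Cr}\cup C^*$, where
\[C^* = \{\phi(m),\, A(m,\phi(m)) : m\in M\}.\]
This yields a deduction of $\Theta$ whose leaves $\Theta'$ decide every formula in $C^*$ in addition to satisfying the earlier clauses.

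At each such leaf $\Theta'$ I need to check that (i) $Cr_I$ is now finished, and (ii) no previously finished axiom has become unfinished. Both rest on the same preservation principle: once $|\psi|_{\Theta_S}$ has reduced to a closed PA sentence, $|\psi|_{\Theta'_S}$ is that same sentence for any extension $\Theta'\supseteq\Theta$, since extending $S$ only assigns values to canonical terms that were still symbolic in the prior reduction. Because $\Theta_P$ is a prefix of $\Theta'_P$, this forces the first numeral $n'$ with $(n'\in I)\in\Theta'_P$ and $\Theta'_S\vDash\neg\phi(n')$ to be exactly $n$; hence the relevant set of $m$'s at $\Theta'$ equals $M$, and by construction of $C^*$ each $\phi(m)$ and $A(m,\phi(m))$ is decided by $\Theta'$, giving (i). The same principle handles (ii): finished \textbf{Induction} axioms retain their numeral $|t|_{\Theta_S}=|t|_{\Theta'_S}$ and their decided $\phi(m,\vec s)$, and finished \textbf{Closure} axioms retain their witness and associated set.

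The main obstacle I expect is showing that entries added to $\Theta'_P\setminus\Theta_P$ by $Cut^{\Omega,\mathrm{form}}$ inferences during the second application cannot shift the witness $n$ earlier in $\Theta'_P$. This is precisely what the preservation principle delivers, but it requires unpacking the reduction semantics for formulas of the form $t\in I$ and invoking the properness condition, which forces any new $P_e$ to extend $\Theta_P$ strictly on the right. Once this verification is in place, each induction step strictly decreases the count of unfinished \textbf{Closure} axioms, so the induction terminates and produces the desired deduction.
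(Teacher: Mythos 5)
Your proposal is correct and follows essentially the same route as the paper: apply the previous lemma with $C\cup\mathrm{Cr}$, then induct on the number of unfinished \textbf{Closure} axioms, finishing each by a further application of the previous lemma with the set $\{\phi(m),A(m,\phi(m))\}$ for $m$ at or before the witness $n$ in $P$. One small over-claim: the witness at a leaf $\Theta'$ need not be \emph{exactly} $n$, since $\Theta$ may fail to decide $\phi(m)$ for some $m$ preceding $n$ and $\Theta'$ could then satisfy $\neg\phi(m)$; but this is harmless, because the witness can only move earlier (as $\Theta'\vDash\neg\phi(n)$ persists under extension), so the relevant set of $m$'s at $\Theta'$ is contained in your $M$ and clause (3) still holds.
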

\begin{proof}
  By the previous lemma applied to $\Theta,C\cup\mathrm{Cr}$, there is a deduction of $\Theta$ from sequents satisfying the first two clauses.  Again we call $Cr_I$ \emph{unfinished} if it is a \textbf{Closure} axiom violating the third clause.  We proceed by induction on the number of unfinished axioms; if there are none, we are finished.

  Suppose $Cr_I=\forall x(A(x,\phi(x))\rightarrow \phi(x))\rightarrow\forall x(x\in I\rightarrow\phi(x))$ is an unfinished axiom.  Since $\Theta\vDash\neg Cr_I$, we must have $\Theta\vDash \exists x(x\in I\wedge\neg\phi(x))$, so there is some $n\in I$ in $P$ so that $\Theta\vDash\neg\phi(n)$.  We apply the previous lemma to $\Theta,\{\phi(m),A(m,\phi(m))\mid m\in I\text{ appears before }n\in I\text{ in }P\}\cup\{\phi(n),A(n,\phi(n))\}$.  Any axiom finished in $\Theta$ will be finished in all sequents $\Theta'$, so the claim follows by the inductive hypothesis.
\end{proof}

\begin{theorem}\label{thm:initial_deriv}
  There is a derivation of $\emptyset$ consisting of $Cut$-inferences of rank $\leq\Omega+\rho(\emptyset,\mathrm{Cr})$.
\end{theorem}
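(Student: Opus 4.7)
The plan is to invoke the preceding lemma with $\Theta=\emptyset$ and $C=\mathrm{Cr}$. This produces a deduction of $\emptyset$, consisting of $Cut$-inferences of rank $\leq\Omega+\rho(\emptyset,\mathrm{Cr})$, from correct leaf sequents $\Theta'$ in which every $\phi\in F(\Theta',\mathrm{Cr})$ is decided by $\Theta'_S$, induction witnesses are decided up to the relevant bound, and closure witnesses for each unsatisfied \textbf{Closure} axiom are decided. What remains is to verify that each leaf $\Theta'$ is already an axiom, which upgrades the deduction into a derivation from the empty set of hypotheses.

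For each leaf $\Theta'$ I would argue by cases. If $\Theta'_S$ is solving, then $\Theta'$ is an instance of $AxS$. If $\Theta'_S$ is computationally inconsistent, then $\Theta'$ is an instance of $AxF$. In the remaining case, $\Theta'$ is correct, cc, and nonsolving, so there is a least $I$ with $\overline{\Theta'_S}\not\vDash Cr_I$, and the decided-property clauses provide exactly the information needed to compute $e(\Theta')$ and $v(\Theta')$ for this $Cr_I$ entirely within $\Theta'_S$. Each canonical subexpression appearing in the evaluated critical formula $|Cr_I|_{\Theta'_S}$---including the one that becomes $e(\Theta')$, whether a Skolem term of rank $<\Omega$, a formula $n\in I$, or a term $c_n$ of rank $\Omega$---has been exposed by one of the $Cut$, $Cut^{\Omega,\mathrm{term}}$, or $Cut^{\Omega,\mathrm{form}}$ inferences above $\Theta'$. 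Consequently no step in the computation of $H(\Theta'_S,\Theta'_P,\Theta'_V)$ leaves $\Theta'_S$, so the $H$-step applies to $\Theta'$.

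The delicate point is verifying the remaining requirement of $AxH_{e,v}$: that some active $e'\in\dom(\Theta')$ satisfies $\Theta'_F(e')=f$. Since the starting derivation uses only $Cut$-family inferences (not $CutFr$-variants), whenever an expression entered $\Theta'$ with value $?$ it was placed on the $f$-labeled premise. In particular, the branch of the cut introducing $e(\Theta')$ which leads to $\Theta'$ either assigned $e(\Theta')$ a numerical value---which would force $(e(\Theta'),?)\notin\Theta'$ and hence block the $H$-step, contradicting the previous paragraph---or placed $e(\Theta')$ into $\Theta'$ with $F$-label $f$. The first alternative is ruled out, so $e(\Theta')$ itself is in $\dom(\Theta')$ with $\Theta'_F(e(\Theta'))=f$, and $e(\Theta')$ is manifestly active (its value changes, or it is removed, in $H(\Theta')$). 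The main obstacle is thus largely bookkeeping: tracing through the inductive constructions of the three preceding lemmas to confirm that every subexpression $H$ needs to inspect has actually been cut upon, and that all the cuts have rank $\leq\Omega+\rho(\emptyset,\mathrm{Cr})$. Once this is done the case analysis closes and yields the desired derivation.
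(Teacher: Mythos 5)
Your overall strategy matches the paper's: apply the preceding three-clause lemma to $\emptyset,\mathrm{Cr}$ and then argue that the resulting leaves are axioms. The $AxS$/$AxF$ cases and your observation that every expression introduced by a $Cut$-family inference in the starting derivation carries the label $f$ (so that an active $f$-labeled expression exists for $AxH$) are fine.

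The gap is in the claim that each remaining leaf $\Theta'$ is \emph{already} an instance of $AxH$. Recall that ``the $H$-step applies'' is defined so that all calculations in the computation of $H(\Theta')$ take place in $\Theta'_S$, and this explicitly includes the requirement that if $(c_n,?)\in\Theta'_S$ and $e(\Theta')$ is a term of rank $\Omega$, then $n\in I$ must be in $\dom(\Theta'_S)$ (otherwise one cannot tell whether $(c_n,?)$ survives into $H(\Theta')$). The three preceding lemmas only guarantee that the formulas in $F(\Theta',\mathrm{Cr})$, the induction instances, and the closure instances are decided; they do \emph{not} guarantee that every $n\in I$ with $(c_n,?)\in\dom(\Theta'_S)$ is decided, since such terms $c_n$ can enter the domain as a by-product of deciding other formulas without their companion formulas ever being cut upon. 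So your assertion that ``no step in the computation of $H(\Theta'_S,\Theta'_P,\Theta'_V)$ leaves $\Theta'_S$'' does not follow, and the ``bookkeeping'' you defer would in fact fail. The paper's proof acknowledges exactly this failure mode and repairs it: at each such leaf there are only finitely many offending $n\in I$, so finitely many further applications of Lemma \ref{thm:computeformula} (i.e.\ additional $Cut^{\Omega,\mathrm{form}}$ inferences, which have rank $\Omega\leq\Omega+\rho(\emptyset,\mathrm{Cr})$) derive $\Theta'$ from sequents where the $H$-step does apply and which are therefore $AxH$ instances. Adding that final step closes your argument.
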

\begin{proof}
  We apply the previous lemma to $\emptyset,\mathrm{Cr}$.  We claim that all sequents $\Theta'$ satisfying the three clauses in that lemma are axioms.  Suppose $\Theta'$ is not an $AxF$ or $AxS$ axiom.  The only way the $H$-rule fails to apply to $\Theta'$ is if there is a $(c_n,?)\in\Theta'_S$ such that $\Theta'$ fails to decide $n\in I$; there are finitely many such $n\in I$, so by finitely many applications of Lemma \ref{thm:computeformula}, we may derive $\Theta'$ from instances of the $AxH$ axiom.
\end{proof}

\section{Predicative Cut-Elimination}
\subsection{Outline of Predicative Cut-Elimination}

\begin{definition}
  Let $d$ be a deduction.  If $X\in\{Cut,CutFr,Fr,H,HS\}$ and $\bowtie\in\{\leq,<,=,>,\geq\}$ then we say $X(d)\bowtie r$ if every application of a rule $X$, $X^{\Omega,\mathrm{term}}$, or $X^{\Omega,\mathrm{form}}$ in $d$ has main expression with rank $\bowtie r$.  

  We say $d$ is an \emph{$r$-deduction} if $Cut(d)<r$, $CutFr(d)<0$, $Fr(d)\geq r$, and $H(d)\geq r$.

We say $d$ is an \emph{$r^+$-deduction} if $Cut(d)<r$, $CutFr(d)=r$, $Fr(d)>r$, and $H(d)\geq r$.
\end{definition}

The basic operation in the cut-elimination proof is to take an $r+1$-derivation of $\emptyset$ and convert it to an $r$-derivation of $\emptyset$.  As an intermediate step, we produce an $r^+$-derivation, since an $r^+$-derivation may be pruned (Lemma \ref{thm:pred_prune}) to an $r$-derivation.  The conversion of an $r+1$-derivation to an $r^+$-derivation takes place inductively, beginning at the axioms.  The main step occurs at the $Cut_e$ rule with $rk(e)=r$; suppose we have, inductively, converted all branches of some $Cut_e$ rule to $r^+$-derivations:

  \begin{prooftree*}
    \Infer0[$AxH_{e,u}$]{\tikzmark{pt1_s1}(e,?,f),\Sigma}
    \Ellipsis{}{\tikzmark{pt1_e1}(e,?,f),\Theta}
    \Hypo{\ldots}
    \Hypo{\phantom{(e,u),\Theta}\tikzmark{pt1_s2}}
    \Ellipsis{}{(e,u),\Theta\tikzmark{pt1_e2}}
    \Infer3[$Cut_e$]{\Theta\tikzmark{pt1_s3}}
    \Ellipsis{}{\emptyset\tikzmark{pt1_e3}}
  \end{prooftree*}

  \begin{tikzpicture}[remember picture, overlay]
    \draw[decorate, decoration = {brace, amplitude=6pt,mirror}] (pic cs:pt1_s1) -- ($ (pic cs:pt1_e1) + (0,.23) $) node[midway, left=3pt] {$r^+$-derivation $d_?$};
    \draw[decorate, decoration = {brace, amplitude=6pt}] (pic cs:pt1_s2) -- ($ (pic cs:pt1_e2) + (0,.23) $) node[midway, right=3pt] {$r^+$-derivation $d_u$};
    \draw[decorate, decoration = {brace, amplitude=6pt}] (pic cs:pt1_s3) -- ($ (pic cs:pt1_e3) + (0,.23) $) node[midway, right=3pt] {$r+1$-derivation};
  \end{tikzpicture}

We should replace the $Cut_e$ rule with a $CutFr_e$ rule, which is permitted in an $r^+$-derivation.  This means setting $e$ to be temporary everywhere in the branch above $(e,?,t),\Theta$ (that is, if $\Theta'$ is some sequent appearing in the branch above $(e,?,t),\Theta$, we should replace it with $\Theta''$ which is identical except that $\Theta''_F(e)=f$); this is Lemma \ref{thm:f_to_t}.  This invalidates all $AxH_{e,u}$ axioms above $(e,?,f),\Theta$; these axioms must be replaced by $H_{e,u}$ inferences, and we must produce an $r^+$-derivation of $(e,u)$ to place above them:

  \begin{prooftree*}
    \Hypo{?}
    \Infer1{(e,u),\Sigma_{\leq r}}
    \Infer1[$H_{e,u}$]{\tikzmark{pt2_s1}(e,?,t),\Sigma}
    \Ellipsis{}{\tikzmark{pt2_e1}(e,?,t),\Theta}
    \Hypo{\ldots}
    \Hypo{\phantom{(e,u),\Theta}\tikzmark{pt2_s2}}
    \Ellipsis{}{(e,u),\Theta\tikzmark{pt2_e2}}
    \Infer3[$CutFr_e$]{\Theta\tikzmark{pt2_s3}}
    \Ellipsis{}{\emptyset\tikzmark{pt2_e3}}
  \end{prooftree*}

  \begin{tikzpicture}[remember picture, overlay]
    \draw[decorate, decoration = {brace, amplitude=6pt,mirror}] (pic cs:pt2_s1) -- ($ (pic cs:pt2_e1) + (0,.23) $) node[midway, left=3pt] {$r^+$-derivation $d_?$};
    \draw[decorate, decoration = {brace, amplitude=6pt}] (pic cs:pt2_s2) -- ($ (pic cs:pt2_e2) + (0,.23) $) node[midway, right=3pt] {$r^+$-derivation $d_u$};
    \draw[decorate, decoration = {brace, amplitude=6pt}] (pic cs:pt2_s3) -- ($ (pic cs:pt2_e3) + (0,.23) $) node[midway, right=3pt] {$r+1$-derivation};
  \end{tikzpicture}

If $\Theta$ were equal to $\Sigma_{\leq r}$, we could simply place $d_u$ as $?$.  We will have to make some modifications to $d_u$ to make it a deduction of $(e,u),\Sigma_{\leq r}$.  There are two sources of differences between $\Sigma_{\leq r}$ and $\Theta$, corresponding to ways $d_?$ might have modified expressions of rank $\leq r$ and ways $d_?$ might have modified expressions of rank $> r$.

We consider the $\leq r$ case first; the derivation $d_?$ is an $r^+$-derivation, so the only inference rules modifying the expressions of rank $<r$ are $Cut_{e'}$ inferences.  There might be both $CutFr_{e'}$ and $H_{e',v}$ inferences with $rk(e')=r$; importantly, however, if an $H_{e',v}$ inference appears, we have $(e',?,t)\not\in\Theta$: the branch $d_?$ can introduce new rank $e'$ terms with a $CutFr_{e'}$, and then higher-up those terms might be modified with $H_{e',v}$ inferences, but the branch does not remove any expressions of rank $\leq r$ from $\Theta$.  (This property is formalized in Lemma \ref{thm:pred_char}.)  We attempt to copy these over to $d_u$ by replacing each sequent $\Theta'$ in $d_u$ with $\Theta'\cup\Sigma_{\leq r}$; the only obstacle is if there is a $Cut_{e'}$ or $CutFr_{e'}$ inference in $d_u$ with $e'\in \dom(\Sigma_{\leq r}\setminus\Theta)$: if $(e',v)\in\Sigma_{\leq r}$ then

  \begin{prooftree}
\Hypo{}
\Ellipsis{}{(e',?,t),(e,u),\Delta'}
\Hypo{\ldots}
\Hypo{}
\Ellipsis{}{(e',v),(e,u),\Delta''}
\Ellipsis{}{(e',v),(e,u),\Delta'}
\Infer3[$CutFr_{e'}$]{(e,u),\Delta'}
\Ellipsis{}{(e,u),\Delta}
\Ellipsis{}{(e,u),\Theta}
  \end{prooftree}
becomes
  \begin{prooftree}
\Hypo{}
\Ellipsis{}{(e',v),(e,u),\Delta''\cup\Sigma_{\leq r}}
\Ellipsis{}{(e',v),(e,u),\Delta'\cup\Sigma_{\leq r}}
\Ellipsis{}{(e,u),\Delta\cup\Sigma_{\leq r}}
\Ellipsis{}{(e,u),\Theta\cup\Sigma_{\leq r}}
  \end{prooftree}

(Note that, since $(e',v)\in\Sigma_{\leq r}$, we include it in the deduction on the right only for clarity: $(e',v),(e,u),\Delta''\cup\Sigma_{\leq r}=(e,u),\Delta''\cup\Sigma_{\leq r}$.)  This transformation is the content of Lemma \ref{thm:pred_compat}.  This step is why we need to use the $CutFr$ inference as an intermediate step: during the process of converting an $r+1$-derivation to an $r^+$-derivation, we need to keep the side branches around.  After we've converted the whole deduction to an $r^+$-derivation, the side branches are no longer needed, so we prune them to obtain an $r$-derivation.

We turn to the second class of differences between $\Sigma_{\leq r}$ and $\Theta$: $\Theta$ contains terms of rank $>r$ which were deleted in $\Sigma_{\leq r}$.  To restore these, we produce a deduction of $(e,u),\Sigma_{\leq r}$ from $(e,u),\Theta\cup\Sigma_{\leq r}$.  To do this, we consider the path from $\Theta$ down to $\emptyset$.  The only inference rules in this path affecting expressions of rank $>r$ are $H_{e',v}$ and $Fr_{e'}$ rules.  In Lemma \ref{thm:pred_rep} we simply repeat the same sequence of $H_{e',v}$ and $Fr_{e'}$ rules (ignoring all $Cut$ rules in the path, since any expression introduced in a $Cut$ rule in $p$ is still present in $\Sigma_{\leq r}$):

  \begin{prooftree*}
    \Hypo{\tikzmark{pt3_v1}\phantom{(e,u),\Theta\cup\Sigma_{\leq r}}}
    \Ellipsis{}{\tikzmark{pt3_s1}(e,u),\Theta\cup\Sigma_{\leq r}}
    \Ellipsis{}{\tikzmark{pt3_e1}(e,u),\Sigma_{\leq r}}
    \Infer1[$H_{e,u}$]{\tikzmark{pt3_s2}(e,?,t),\Sigma}
    \Ellipsis{}{\tikzmark{pt3_e2}(e,?,t),\Theta}
    \Hypo{\ldots}
    \Hypo{\phantom{(e,u),\Theta}\tikzmark{pt3_s3}}
    \Ellipsis{}{(e,u),\Theta\tikzmark{pt3_e3}}
    \Infer3[$CutFr_e$]{\Theta\tikzmark{pt3_s4}}
    \Ellipsis{}{\emptyset\tikzmark{pt3_e4}}
  \end{prooftree*}

  \begin{tikzpicture}[remember picture, overlay]
    \draw[decorate, decoration = {brace, amplitude=6pt,mirror}] (pic cs:pt3_v1) -- ($ (pic cs:pt3_s1) + (0,.23) $) node[midway, left=3pt] {$d_u\cup\Sigma_{\leq r}$};
    \draw[decorate, decoration = {brace, amplitude=6pt,mirror}] (pic cs:pt3_s1) |- node[pos=.38, left=3pt] {repetition of $p$} ($ (pic cs:pt3_e1) + (0,.23) $) ;
    \draw[decorate, decoration = {brace, amplitude=6pt}] (pic cs:pt3_s3) -- ($ (pic cs:pt3_e3) + (0,.23) $) node[midway, right=3pt] {$d_u$};
    \draw[decorate, decoration = {brace, amplitude=6pt}] (pic cs:pt3_s4) -- ($ (pic cs:pt3_e4) + (0,.23) $) node[midway, right=3pt] {path $p$};
  \end{tikzpicture}

\subsection{Predicative Cut-elimination}

\begin{lemma}\label{thm:pred_prune}
  If $d$ is an $r^+$-deduction of $\Theta$ from $\mathcal{S}$ then there is an $r$-deduction of $\Theta$ from $\mathcal{S}$.
\end{lemma}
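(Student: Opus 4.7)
The plan is a straightforward structural induction on the $r^+$-deduction $d$. The only obstacle to $d$ already being an $r$-deduction is the presence of $CutFr$ inferences of rank exactly $r$ (and possibly $Fr$ inferences of rank $r$ are allowed in an $r$-deduction but not $r^+$-deduction, though this direction is harmless). So the job is to remove all $CutFr$ inferences while not reintroducing any other disallowed rule.

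The key observation is the shape of the $CutFr_e$ rule (and its $CutFr^{\Omega,\mathrm{term}}_e$ variant): its leftmost premise is exactly $(e,?,t),\Theta$, which is precisely the premise needed by the $Fr_e$ inference to conclude $\Theta$. Thus at every $CutFr_e$ inference with $rk(e)=r$, I would discard all the right-hand branches $(e,u),\Theta$ and keep only the leftmost subderivation $d_?$ of $(e,?,t),\Theta$. By the inductive hypothesis applied to $d_?$ (which is itself an $r^+$-deduction of $(e,?,t),\Theta$ from $\mathcal{S}$), I obtain an $r$-deduction of $(e,?,t),\Theta$, to which I apply a single $Fr_e$ inference to conclude $\Theta$. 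Since $rk(e)=r$ and the $r$-deduction condition requires $Fr(d)\geq r$, this new $Fr_e$ inference is permitted.

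For the remaining cases of the induction, there is nothing to do beyond recursing into the subderivations: axioms are preserved verbatim; $Cut$ or $Cut^{\Omega,\ast}$ inferences already have rank $<r$ in $d$ (since $Cut(d)<r$) and so remain acceptable; $Fr$ inferences of rank $>r$ are acceptable in any $r$-deduction; $H$ inferences of rank $\geq r$ are acceptable as well. In each case, the inductive hypothesis yields $r$-deductions of the premises, which combine via the same inference to give an $r$-deduction of the conclusion.

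The main (very mild) thing to check is that the replacement preserves the bookkeeping constraints: the resulting tree has no $CutFr$ of any rank, has $Cut(d)<r$ inherited from the original, has $Fr(d)\geq r$ (the original $Fr$ had rank $>r$, and the newly introduced $Fr_e$ inferences all have rank exactly $r$), and has $H(d)\geq r$ inherited from the original. I do not expect any real obstacle here; this lemma is just the routine ``pruning'' step that justifies separating the intermediate $r^+$ stage, used in the outline at the start of Section~6 to prepare for the substantive content of Lemmas~\ref{thm:computeterm}'s analogues (the $\leq r$ compatibility and the $>r$ repetition steps) in the subsections that follow.
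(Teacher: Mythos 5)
Your proposal is correct and is exactly the paper's proof: the paper simply says to prune every $CutFr$ inference to an $Fr$ inference, which is precisely your replacement of each $CutFr_e$ by an $Fr_e$ applied to the leftmost premise $(e,?,t),\Theta$, discarding the side branches. Your verification that the rank bookkeeping is preserved (the new $Fr_e$ inferences have rank $r\geq r$, and all other constraints are inherited) is the only content needed.
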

\begin{proof}
  Prune every $CutFr$ inference to an $Fr$ inference.
\end{proof}

\begin{lemma}\label{thm:pred_tail_char}
  If $\Theta$ is a sequent in an $r+1$-deduction of $\emptyset$ then $\Theta t>r$ and $\Theta f\leq r$.
\end{lemma}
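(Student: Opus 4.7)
The plan is a straightforward induction on the path from the root $\emptyset$ down to $\Theta$; the root itself vacuously satisfies both $\emptyset t > r$ and $\emptyset f \leq r$. For the inductive step, I assume the claim for the conclusion of some inference rule and verify it for each premise, doing case analysis over the rules allowed in an $r+1$-deduction.

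The key observation is that the definition of an $r+1$-deduction dovetails with the $F$-labels each rule introduces: every $Cut$, $Cut^{\Omega,\mathrm{term}}$, and $Cut^{\Omega,\mathrm{form}}$ inference has main expression of rank $\leq r$, there are no $CutFr$ inferences at all (since $CutFr(d)<0$), and every $Fr$ and $H$ inference has main expression of rank $\geq r+1 > r$. Inspecting the premises of the cut-type rules, the newly added elements either carry the label $f$ (the $(e,?,f)$, $(e,u,f)$, and $(e,\top,f,P_e,V_e)$ branches, plus the auxiliary $(m\in I,\top,f)$ entries in the $\mathrm{form}$ case, all of rank $rk(e)\leq r$) or carry no defined $F$-label at all (the $(e,u)$ branches of $Cut_e$ where $rk(e)\neq\Omega$ and $u$ is a numeral). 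In either subcase, both $\Theta' t > r$ and $\Theta' f \leq r$ pass from the conclusion to the premise. The $Fr_e$ rule just adds a single $(e,?,t)$ with $rk(e)>r$, so $\Theta' t > r$ is preserved.

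The only delicate case is $H_{e,v}$, where the premise $H((e,?,t),\Theta_0)$ can both add and remove elements. Removals never violate the conditions since they can only shrink $\Theta t$ and $\Theta f$. For additions, the definition of $H(\Theta)_F$ assigns label $t$ to every newly introduced expression on which $F$ becomes defined, so it suffices to check that each such new expression has rank $> r$. Consulting the explicit definition of $H(S,P,V)$: outside of rank $\Omega$, the only new entry is $(e,v)$ of rank $rk(e) \geq r+1 > r$ (and $F$ is undefined there anyway when $v$ is a numeral and $rk(e)\neq\Omega$); in the positive and negative rank-$\Omega$ cases, the hypothesis $rk(e) = \Omega \geq r+1$ forces $r < \Omega$, so the newly added $(n\in I,\top)$, $(c_n,v)$, restored entries of $V'$, and entries of $S'$ are all of rank $\Omega > r$. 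This closes the induction, and everything outside the $H$ step is bookkeeping dictated by the form of each inference.
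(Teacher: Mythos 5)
Your proof is correct and follows essentially the same route as the paper's: an induction from the root with a case analysis on the inference rules, using the rank constraints in the definition of an $r+1$-deduction ($Cut \leq r$, no $CutFr$, $Fr$ and $H$ of rank $>r$) to match each newly labelled expression's rank against its $F$-label, with the $H_{e,v}$ case (new entries of rank $\geq r+1$, or rank $\Omega>r$ in the $\Omega$ cases, all labelled $t$; persisting entries inheriting their labels) handled just as in the paper. Your treatment of the rank-$\Omega$ $H$-step is in fact slightly more explicit than the paper's about why the restored $V'$ entries are harmless.
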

\begin{proof}
  By induction on $d$ beginning from the root.  This is trivial for $\emptyset$.  If $\Theta$ appears in the premise of a $Cut_e$ or $Cut^\Omega_e$ rule with conclusion $\Theta'$ then $\Theta t=\Theta' t$ and $\Theta f\subseteq \Theta' f\cup\{e\}$, and $rk(e)\leq r$.  If $\Theta$ appears in the premise of a $Fr_e$ rule with conclusion $\Theta'$ then $\Theta t=\Theta' t\cup\{e\}$ and $rk(e)\geq r+1$ and $\Theta f=\Theta' f$.  If $\Theta$ appears in the premise of an $H_{e,v}$ rule with $rk(e)\neq\Omega$ with conclusion $\Theta'$ then $\Theta t\subseteq \Theta' t$ and $\Theta f=\Theta' f$.  If $\Theta$ appears in the premise of an $H_{e,v}$ rule with $rk(e)=\Omega$, still $\Theta f=\Theta' f$, and if $e\in \Theta t\setminus\Theta' t$ then $rk(e)=\Omega>r$.
\end{proof}

\begin{lemma}\label{thm:pred_char}
  If $\Sigma$ is a sequent in an $r^+$-deduction of $\Theta$ with $r\neq\Omega$ then:
  \begin{itemize}
  \item $\Theta_{\leq r}\setminus\Theta t\subseteq\Sigma$,
  \item $(\Sigma f)_{\geq r}\subseteq\Theta$,
  \item If $\Theta t\geq r$ then $\Sigma t\geq r$,
  \item if $r>\Omega$ then $\Theta_P$ is an initial segment of $\Sigma_P$.
  \end{itemize}
\end{lemma}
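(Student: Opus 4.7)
My plan is to prove Lemma \ref{thm:pred_char} by induction on the length of the path from $\Theta$ to $\Sigma$ in the $r^+$-deduction, moving away from the root. The base case $\Sigma = \Theta$ is immediate for all four clauses. For the inductive step, I would fix an inference rule whose conclusion $\Sigma'$ already satisfies the four conclusions and verify each clause for an immediate premise $\Sigma$. Recall that in an $r^+$-deduction the rules available are $Cut$ (and its $\Omega$-variants) with rank $<r$, $CutFr$ with rank $=r$ (no $\Omega$-variants can occur when $r\neq\Omega$), $Fr$ with rank $>r$, and $H$ with rank $\geq r$.

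For clause (1), $\Theta_{\leq r}\setminus\Theta t\subseteq\Sigma$, I would strengthen the invariant to also record that every $(e,u)$ in this set has $\Sigma_F(e)=f$. Going up through $Cut$-style, $CutFr$-style, or $Fr$-style rules, we only add new expressions, so nothing is removed and $F$-values on existing expressions are preserved. The only inference that removes expressions is $H_{e',v}$: by definition of the rule, $\Sigma'_F(e')=t$, so by the strengthened invariant $(e',?)\notin\Theta_{\leq r}\setminus\Theta t$; and the other things removed by the $H$-step (when $rk(e')\neq\Omega$) have rank $>rk(e')\geq r$, and (when $rk(e')=\Omega$, which forces $r<\Omega$) leave $S_{<\Omega}$ intact, so nothing of rank $\leq r$ is removed either.

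For clause (2), $(\Sigma f)_{\geq r}\subseteq\Theta$, I maintain the invariant that no rule ever introduces a new expression with $F$-value $f$ and rank $\geq r$. Checking rule by rule: $Cut_{e'}$ (rank $<r$) introduces expressions of rank $<r$; $CutFr_{e'}$ (rank $=r$, with $r\neq\Omega$) introduces $(e',?,t)$ (wrong $F$-value) or $(e',u)$ (no $F$-value attached since $rk(e')\neq\Omega$); $Fr_{e'}$ introduces a $t$-valued expression; $H_{e',v}$ sets $F=t$ on every new expression it produces. The $\Omega$-variants $Cut^{\Omega,\mathrm{term}}$ and $Cut^{\Omega,\mathrm{form}}$ can only appear as $Cut$'s with rank $\Omega$, hence only when $r>\Omega$, and then the new $f$-valued expressions they add have rank $\Omega<r$. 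Clause (3), that $\Theta t\geq r\Rightarrow \Sigma t\geq r$, is symmetric: every rule that introduces an expression with $F$-value $t$ does so either at rank exactly $r$ ($CutFr$), rank $>r$ ($Fr$), or, in the case of $H$ with $rk(e')=\Omega$ and $r<\Omega$, at rank $\Omega\geq r$; no rule adds a $t$-valued expression of rank $<r$.

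For clause (4), when $r>\Omega$, the only rules that alter $P$ are $Cut^{\Omega,\mathrm{form}}_e$ (through the $(e,\top,f,P_e,V_e)$ premise, which appends $P_e$ to $\Theta_P$) and $H_{e,v}$ with $rk(e)=\Omega$ (positive $\Omega$ appends, negative $\Omega$ truncates). The latter cannot occur, since $H(d)\geq r>\Omega$ forces all $H$ main expressions to have rank $>\Omega$, and for such $H$ the $H$-step leaves $P$ unchanged. The former only extends $P$, so $\Theta_P$ remains an initial segment of $\Sigma_P$. The main subtlety I expect to be delicate is keeping track of $F$-values across $H$-steps in clause (1), since the $H$-step is the only inference that deletes entries; but strengthening the inductive invariant to track the $F$-value of entries in $\Theta_{\leq r}\setminus\Theta t$ makes this straightforward once one observes that the main expression of an $H$-rule is always marked $t$ in its conclusion.
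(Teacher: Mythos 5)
Your proof is correct and follows essentially the same route as the paper's: an induction along the path from $\Theta$ to $\Sigma$, checking for each admissible inference rule that nothing of rank $\leq r$ outside $\Theta t$ is removed, that no new $f$-marked expression of rank $\geq r$ or $t$-marked expression of rank $<r$ is introduced, and that only $Cut^{\Omega,\mathrm{form}}$ can touch $P$ when $r>\Omega$. One small imprecision: elements $(e,u)$ of $\Theta_{\leq r}\setminus\Theta t$ with $u\neq{?}$ and $rk(e)\neq\Omega$ are not in $\dom(\Sigma_F)$ at all, so your strengthened invariant should say they either carry the value $f$ or carry no $F$-value; for those latter elements the relevant observation is that an $H$-step of rank $r'\geq r$ preserves all of $S_{\leq r'}$ except its main expression, whose value in the conclusion is ${?}$, so they cannot be the main expression and are never removed.
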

\begin{proof}
  Consider the path from $\Theta$ up to $\Sigma$.  The only inference rule which could cause an element of $\Theta_{\leq r}$ to be removed is $H$, which can only happen to elements in $\Theta t$.  The only way to add a fixed element is with a $Cut$ inference, but all $Cut$ inferences have rank $<r$.  The only way to add a temporary element is with a $CutFr$ or $Fr$ rule, which have rank $=r$ and $>r$ respectively, so if $\Theta t\geq r$ then $\Sigma t\geq r$.  The only way to modify the $P$ component in an $r^+$-deduction with $r>\Omega$ is with a $Cut^{\Omega,\mathrm{form}}$ rule.
\end{proof}

\begin{definition}
  Let $\Theta$ and $\Sigma$ be two sequents.  Then $\Theta$ and $\Sigma$ are \emph{compatible} if:
  \begin{itemize}
  \item $\Theta_S\cup\Sigma_S$ is a function,
  \item $\Theta_F\cup\Sigma_F$ is a function,
  \item Either $\Theta_P$ extends $\Sigma_P$ or $\Sigma_P$ extends $\Theta_P$,
  \item For each $n\in I$ in both shorter of $\Theta_P$ and $\Sigma_P$, either $\Theta_V(n\in I)\subseteq\Sigma_V(n\in I)$ or $\Sigma_V(n\in I)\subseteq\Theta_V(n\in I)$.
  \end{itemize}
If $\Theta$ and $\Sigma$ are compatible, we write $\Theta\cup\Sigma$ for the componentwise union---$(\Theta\cup\Sigma)_S=\Theta_S\cup\Sigma_S$, $(\Theta\cup\Sigma)_F=\Theta_F\cup\Sigma_F$, $(\Theta\cup\Sigma)_P$ is the longer of $\Theta_P$ and $\Sigma_P$, and $(\Theta\cup\Sigma)_V(n\in I)=\Theta_V(n\in I)\cup\Sigma_V(n\in I)$ (where $\Theta_V(n\in I)=\emptyset$ if $n\in I\not\in\dom(\Theta_V)$, and similarly for $\Sigma_V$).
\end{definition}

\begin{lemma}
  If $\Theta$ and $\Sigma$ are compatible then $\Theta\cup\Sigma$ is a sequent.
\end{lemma}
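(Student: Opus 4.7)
The plan is to verify in turn each clause of the definition of a sequent for the componentwise union. Since the statement packages together a number of compatibility checks, the proof is essentially a bookkeeping exercise; there is no single hard step, and I expect each item to fall out directly from the four clauses of the definition of compatibility.

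First I would check that $(\Theta\cup\Sigma)_S$ is a legitimate $\epsilon$-substitution. Since $\Theta_S$ and $\Sigma_S$ are both $\epsilon$-substitutions, the domain $\dom(\Theta_S)\cup\dom(\Sigma_S)$ is a set of canonical expressions, and compatibility gives that this union is a function, so the value assigned to each $e$ is a well-defined numeral, $?$, or $\top$, as appropriate. The same argument applied to $\Theta_F$ and $\Sigma_F$ shows that $(\Theta\cup\Sigma)_F$ is a function; its domain is exactly $\{e\in\dom((\Theta\cup\Sigma)_S)\mid (\Theta\cup\Sigma)_S(e)={?}\text{ or } rk(e)=\Omega\}$, since that is the union of the analogous sets for $\Theta$ and $\Sigma$ (whenever one sequent contains $e$ and demands an $F$-value, the corresponding $\Theta_F(e)$ or $\Sigma_F(e)$ is already defined).

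Next I would verify that the $P,V$-component still forms a substitution history. Without loss of generality $\Sigma_P$ extends $\Theta_P$, so $(\Theta\cup\Sigma)_P=\Sigma_P$ is a finite sequence of distinct formulas $n\in I$, and admissibility depends only on $P$, so it is inherited from $\Sigma$. The domain of $(\Theta\cup\Sigma)_V$ equals the set of formulas appearing in $\Sigma_P$: for $n\in I$ in the shared initial segment, both $\Theta_V(n\in I)$ and $\Sigma_V(n\in I)$ are defined, and the fourth compatibility condition gives that one is contained in the other, so their union is a finite $\epsilon$-substitution and satisfies $(V(n\in I))^-_{=\Omega}=V(n\in I)$ (both of these properties are preserved under the union of two sets one of which contains the other). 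For $n\in I$ in $\Sigma_P\setminus \Theta_P$ the union reduces to $\Sigma_V(n\in I)$, which is already of the required form.

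Finally I would check that $(\Theta\cup\Sigma)_P$ orders $\dom((\Theta\cup\Sigma)_{=\Omega}^{+,\mathrm{form}})$. The positive rank-$\Omega$ formulas of $(\Theta\cup\Sigma)_S$ are exactly those of $\Theta_S$ together with those of $\Sigma_S$. By the historical-substitution condition applied to each of $\Theta$ and $\Sigma$, every such formula already appears in $\Theta_P$ or in $\Sigma_P$, hence in the longer of the two, which equals $(\Theta\cup\Sigma)_P$. Conversely, any $n\in I$ appearing in $(\Theta\cup\Sigma)_P=\Sigma_P$ is in the domain of $\Sigma_S^{+,\mathrm{form}}$ and hence of $(\Theta\cup\Sigma)_S^{+,\mathrm{form}}$. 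Distinctness of entries was already noted. The only mildly subtle point — and the closest thing to an obstacle — is ensuring that a positive rank-$\Omega$ formula $n\in I$ contributed by $\Theta$ is actually enumerated in the longer sequence $\Sigma_P$; but that is forced by the fact that $\Sigma_P$ extends $\Theta_P$, so any $n\in I\in\Theta_P$ automatically appears in $\Sigma_P$. Putting these checks together, $\Theta\cup\Sigma$ satisfies every clause of the sequent definition.
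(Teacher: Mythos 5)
Your verification is correct: each clause of the definitions of $\epsilon$-substitution, substitution history, historical substitution, and sequent is checked for the componentwise union, using exactly the four compatibility conditions where needed. The paper states this lemma without proof, treating it as routine, and your argument is precisely the bookkeeping it omits.
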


\begin{lemma}\label{thm:pred_compat}
  Suppose $d$ is an $r^+$-derivation of $\Theta$ with $r\neq\Omega$.  Let $\Sigma\leq r$ be a correct sequent such that $\Theta$ and $\Sigma$ are compatible, $(\Sigma f)_{\geq r}\subseteq \Theta$, and $\Sigma t\geq r$.  Then there is an $r^+$-derivation $d\cup\Sigma$ of $\Theta\cup\Sigma$.
\end{lemma}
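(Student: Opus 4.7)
The plan is to induct on the structure of $d$, producing for each subdeduction with conclusion $\Theta'$ an $r^+$-derivation of $\Theta' \cup \Sigma$. The invariant to maintain along the induction is that $\Theta'$ remains compatible with $\Sigma$ and that the hypotheses $(\Sigma f)_{\geq r} \subseteq \Theta'$ and $\Sigma t \geq r$ continue to hold. Lemma \ref{thm:pred_char} is what keeps this invariant stable: ascending in an $r^+$-deduction, new fixed elements acquired on the way to $\Theta'$ all have rank $\leq r$ (from $Cut$ and $CutFr$ inferences) and new temporary elements have rank $\geq r$ (from $Fr$, $H$, and $CutFr$), both consistent with the shape of $\Sigma$.

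For axioms and for most inference rules, the sequent $\Theta'$ is simply replaced by $\Theta' \cup \Sigma$ and the rule is rebuilt on top of the inductively produced premises. Monotonicity of $\vDash$ in $S$ (enlarging $S$ to a compatible superset can only further evaluate subexpressions, never withdraw a PA-true sentence) handles $AxF$ and $AxS$, and is the main tool for the side conditions of $AxH_{e,v}$ and $H_{e,v}$: since $\Sigma$'s only new content has rank $< r \leq r(\Theta')$, and the $H$-expression and $H$-value are determined by rank $\leq r(\Theta')$ information (which, by Lemmas \ref{thm:low_rank_equiv} and \ref{thm:low_rank_correct} together with $(\Sigma f)_{\geq r}\subseteq\Theta'$, behaves identically in $\Theta' \cup \Sigma$ and in $\Theta'$), we have $e(\Theta') = e(\Theta' \cup \Sigma)$ and $v(\Theta') = v(\Theta' \cup \Sigma)$. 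For every $Fr_{e'}$ and $H_{e',v'}$ inference the main expression $e'$ has rank $\geq r$, and compatibility of $\Theta'$ with $\Sigma$ (recalling $\Sigma t \geq r$) rules out any clash, so these inferences are carried over unchanged.

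The essential non-trivial case is a $Cut_{e'}$ or $CutFr_{e'}$ whose main expression lies in $\dom(\Sigma)$, say with $\Sigma(e') = v$. Every premise of the cut other than the $u = v$ branch produces a sequent whose $S$-component becomes non-functional once unioned with $\Sigma$, so these branches are incompatible with $\Sigma$ and must be discarded. The surviving branch $d'_v$ yields by inductive hypothesis a derivation of $(e', v), \Theta' \cup \Sigma$; but because $(e', v) \in \Sigma$ this sequent is identical to $\Theta' \cup \Sigma$, so the entire cut inference is elided. The preservation of the $r^+$ rank discipline is then immediate, since no rule is added and the ranks of existing rules depend only on their main expression. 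The main technical obstacle to watch out for is the bookkeeping just described, in particular checking that active-expression requirements for $AxH$ and $H$ remain valid after the extension and that the correct branch of every $Cut$/$CutFr$ with main expression in $\dom(\Sigma)$ is the one retained; both reduce to Lemma \ref{thm:pred_char} and monotonicity of satisfaction.
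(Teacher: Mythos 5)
Your overall strategy is the paper's: induct on $d$, carry $Fr$/$H$ inferences and cuts with main expression outside $\dom(\Sigma)$ across the union, and at a $Cut_{e'}$ or $CutFr_{e'}$ with $e'\in\dom(\Sigma)$ discard all branches except the one matching $\Sigma$'s value (using $\Sigma t\geq r$ and $(\Sigma f)_{\geq r}\subseteq\Theta$ to rule out clashes). That skeleton is correct.

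There is, however, a genuine gap: you have not handled the $Cut^{\Omega,\mathrm{form}}_e$ case, which occurs whenever $r>\Omega$ (the lemma only excludes $r=\Omega$, and an $r^+$-derivation with $r>\Omega$ may contain cuts of rank $\Omega$ over formulas $n\in I$, and $\Sigma$ may then have nonempty $\Sigma_P$, $\Sigma_V$). Your branch-selection criterion --- ``every premise other than the $u=v$ branch has a non-functional $S$-component after the union'' --- fails here: when $\Sigma_S(e)=\top$, \emph{all} of the $(e,\top,f,P_e,V_e)$ branches have $S$-components consistent with $\Sigma$; they differ only in their $P$ and $V$ data, and one must select the branch whose $P_e$ is exactly the initial segment of $\Sigma_P$ ending in $e$ minus $\Theta_P$ (with the matching restriction of $V$). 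Moreover, when $e\notin\dom(\Sigma)$ but $\Sigma_P$ properly extends $\Theta_P$, the cut cannot simply be rebuilt: the branches must be re-indexed, replacing each proper $P_e$ for $e,\Theta\cup\Sigma$ by $(\Sigma_P\setminus\Theta_P){}^\frown P_e$ with the corresponding $V$ data, before the inductive hypothesis is applied. This re-indexing is a real step in the paper's proof and nothing in your argument supplies it. Two smaller points: the $H_{e,v}$ case is not literally ``carried over unchanged,'' since its premise is the truncation $(e,v),\Upsilon_{\leq rk(e)}$; one must first delete $e$ from $\Sigma$ (possible since $rk(e)\geq r\geq$ the rank of everything in $\Sigma$, so correctness is preserved) before the union is even well defined and the hypothesis $(\Sigma f)_{\geq r}\subseteq\Theta'$ can be re-verified. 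And your blanket ``monotonicity of $\vDash$'' is false as stated: $\overline{S}$ defaults undefined expressions to $?$, so enlarging $S$ can flip truth values (e.g.\ adding $(n\in I,\top)$ changes $|n\in I|_{\overline S}$ from $\bot$ to $\top$); the preservation of axioms under the union needs to be argued from correctness of $\Sigma$ and the rank constraints, not from monotonicity alone.
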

\begin{proof}
  By induction on $d$.  Consider the final inference of $d$.

Suppose the final inference is $Cut_e$ or $Cut_e^{\Omega,\mathrm{term}}$, so $rk(e)<r$.  If $e\not\in\dom(\Sigma)$ then for each $u$, we apply the inductive hypothesis to obtain $d_u\cup\Sigma$ and then apply a $Cut_e$.  We cannot have $e\in\Sigma t$ since $rk(e)<r$ and $\Sigma t\geq r$.  If $e\in\dom(\Sigma)$ then $d_{\Sigma_S(e)}\cup\Sigma$ is the desired deduction.

The $Cut_e^{\Omega,\mathrm{form}}$ case is similar.  If $e\in\dom(\Sigma)$ and $\Sigma_S(e)={?}$ then we take $d_?\cup\Sigma$ as the desired deduction.  If $\Sigma_S(e)=\top$ then we let $P'$ be the initial segment of $\Sigma_P$ ending in $e$, and let $P_e=P'\setminus \Theta_P$; we then take $d_{P_e,V\upharpoonright P_e}\cup\Sigma$ as the desired deduction.  If $e\not\in\dom(\Sigma)$ and $\Sigma_P\sqsubseteq\Theta_P$, we proceed as for a $Cut_e$, applying a $Cut^{\Omega,\mathrm{form}}_e$ inference to the deductions $d_{P_e,V_e}\cup\Sigma$.  If $\Sigma_P$ properly extends $\Theta_P$, we have to trim the branches; let $P=\Sigma_P\setminus\Theta_P$, and for each $P_e$ proper for $e,\Theta\cup\Sigma$, let $P'_e$ be $P{}^\frown P_e$ and let $V'_e=V_e\cup(\Sigma_V\upharpoonright P)$.  We apply a $Cut^{\Omega,\mathrm{form}}_e$ inference to the deductions $d_{P'_e,V'_e}\cup\Sigma$.

Suppose the final inference is $CutFr_e$, so $rk(e)=r$.  If $e\not\in\dom(\Sigma)$ then again, for each $u$, we apply the inductive hypothesis to obtain $d_u\cup\Sigma$ and then apply a $CutFr$.  We cannot have $e\in \Sigma f$ since $rk(e)\geq r$ and $(\Sigma f)_{\geq r}\subseteq\Theta$.  If $e\in\dom(\Sigma)$ then $d_{\Sigma_S(e)}\cup\Sigma$ is the desired deduction.

Suppose the final inference is $Fr_e$, so $rk(e)>r$.  Then $e\not\in\dom(\Sigma)$, so we apply $Fr_e$ to $d'\cup\Sigma$.

Suppose the final inference is $H_{e,v}$, so $rk(e)\geq r$, $\Theta=(e,?,t),\Upsilon$, and the premise is $\Theta'=(e,u),\Upsilon_{\leq rk(e)}$.  Since $\Sigma\leq r\leq rk(e)$, removing $e$ from $\Sigma$ cannot change the correctness of $\Sigma$, so let $\Sigma'=\Sigma-\{e\}$, so $(\Sigma' f)_{\geq r}\subseteq\Theta'$.  Therefore $\Sigma'$ and $\Theta'$ are compatible, so we may apply the inductive hypothesis to obtain a derivation $d'\ast\Sigma'$ of $\Theta'\ast\Sigma'$, and $d\ast\Sigma$ is obtained by applying $H_{e,v}$ to $d'\ast\Sigma'$.

Note that any axiom remains an axiom after a union with $\Sigma$ (though it is possible for one axiom to change to a different axiom).
\end{proof}

\begin{definition}
  An $r$-path is a sequence $\Theta_0,\ldots,\Theta_n$ with $\Theta_0=\emptyset$ appearing as a path in some $r$-deduction.
\end{definition}

\begin{lemma}[Repetition]\label{thm:pred_rep}
  If $(\Theta_0,\ldots,\Theta_n)$ is an $r+1$-path with $\Theta=\Theta_n$, $\Sigma\leq r$ is a correct sequent, $\Theta_{\leq r}\subseteq \Sigma$, and if $r\geq\Omega$ then $\Theta_P$ is an initial segment of $\Sigma_P$ and for each $n\in I$ in $\dom(\Theta_V)$, $\Theta_V(n\in I)\subseteq\Sigma_V(n\in I)$, then $\Theta$ and $\Sigma$ are compatible and there is a deduction of $\Sigma$ from $\Theta\cup\Sigma$ consisting only of $Fr$ and $H$ inferences of rank $>r$.
\end{lemma}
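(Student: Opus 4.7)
The plan is to induct on the length $n$ of the path. The base case $n=0$ has $\Theta=\emptyset$, so $\Theta\cup\Sigma=\Sigma$ trivially, the compatibility conditions are all vacuously satisfied, and the required deduction is the single sequent $\Sigma$.

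For the inductive step I would consider the last inference of the path, which moves from $\Theta_{n-1}$ (conclusion) down to $\Theta_n=\Theta$ (the chosen premise). Because we are in an $r+1$-deduction, this inference is either a $Cut$-type rule (including $Cut^{\Omega,\mathrm{term}}$ and $Cut^{\Omega,\mathrm{form}}$) of rank $\leq r$, or an $Fr_e$ or $H_{e,v}$ rule of rank $>r$; note that $CutFr$ does not occur. In every case I intend to apply the inductive hypothesis to the shorter path ending at $\Theta_{n-1}$ to obtain a deduction of $\Sigma$ from $\Theta_{n-1}\cup\Sigma$, and then glue on at most one new inference at the top.

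For a $Cut$-type last inference, the element added to $\Theta_{n-1}$ has rank $\leq r$ (and in the $Cut^{\Omega,\mathrm{form}}$ case the appended $P_e,V_e$ have rank $\Omega\leq r$), hence is already contained in $\Sigma$ by $\Theta_{\leq r}\subseteq\Sigma$ (together with $\Theta_P\sqsubseteq\Sigma_P$ and $\Theta_V\subseteq\Sigma_V$ when $r\geq\Omega$); thus $\Theta_n\cup\Sigma=\Theta_{n-1}\cup\Sigma$ and no new inference is needed. The hypotheses carry over to $\Theta_{n-1}$ because $(\Theta_{n-1})_{\leq r}\subseteq(\Theta_n)_{\leq r}\subseteq\Sigma$ (and analogously for $P,V$). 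For an $Fr_e$ last inference with $rk(e)>r$, we have $e\notin\dom(\Sigma)$ since $\Sigma\leq r$, so $\Theta_n\cup\Sigma=(e,?,t),(\Theta_{n-1}\cup\Sigma)$; prepending an $Fr_e$ inference above the inductive deduction completes the construction.

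The central case is $H_{e,v}$ with $rk(e)>r$, where $\Theta_{n-1}=(e,?,t),\Upsilon$ and $\Theta_n=H(\Theta_{n-1})$. The plan is to prepend an $H_{e,v}$ inference with conclusion $\Theta_{n-1}\cup\Sigma$ and premise $H(\Theta_{n-1}\cup\Sigma)$, splicing onto the IH deduction. The key sub-claim is the identity $H(\Theta_{n-1}\cup\Sigma)=H(\Theta_{n-1})\cup\Sigma=\Theta_n\cup\Sigma$. To prove it I would first show $e(\Theta_{n-1}\cup\Sigma)=e(\Theta_{n-1})$ and $v(\Theta_{n-1}\cup\Sigma)=v(\Theta_{n-1})$: since the $H$-step applies at $\Theta_{n-1}$ in the original derivation, all values consulted in computing $e$ and $v$ are already decided in $\Theta_{n-1}$, and adding the compatible rank-$\leq r$ data in $\Sigma$ can only redundantly reconfirm those decisions. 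Second, the action of $H$ on the sequent must extend uniformly over $\Sigma$: for $rk(e)>\Omega$ this is immediate since $H$ preserves the entire $\leq rk(e)$ portion of its input, which contains all of $\Sigma$, and modifies only the $>rk(e)$ portion disjoint from $\Sigma$. The rank-$\Omega$ subcases require more care: in the positive case $H$ appends some $n\in I$ to the history, and I would use $\Theta_P\sqsubseteq\Sigma_P$ together with $\Theta_V(m\in I)\subseteq\Sigma_V(m\in I)$ to match the extended history against $\Sigma$; in the negative case $H$ truncates to an initial segment of $P$ and re-inserts $V'$, and the hypotheses again line up with the corresponding initial segment of $\Sigma_P$.

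The main obstacle is precisely the rank-$\Omega$ sub-cases of the $H_{e,v}$ step, where the elaborate updating of $P,V$ (including the restoration of $V'$ in the negative case) must be shown to commute with union with $\Sigma$. A subsidiary, more routine, concern is verifying compatibility of $\Theta_i\cup\Sigma$ as a sequent at each inductive step, using the componentwise reading of $\Theta_{\leq r}\subseteq\Sigma$ together with $\Sigma\leq r$ to ensure that the $F$-components merge into a well-defined function on the combined domain.
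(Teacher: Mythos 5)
Your proposal is correct and follows essentially the same route as the paper's proof: induction on the path length, case analysis on the final inference ($Cut$-type absorbed into $\Sigma$ since its rank is $\leq r$, $Fr_e$ and $H_{e,v}$ repeated at the leaf of the inductive deduction), with the $H$ case resting on the observation that the $H$-expression and $H$-value are unchanged under union with the low-rank $\Sigma$ so that $H(\Theta_{n-1}\cup\Sigma)=H(\Theta_{n-1})\cup\Sigma$. The rank-$\Omega$ bookkeeping for $P$ and $V$ that you flag as the main obstacle is handled exactly by the hypotheses you cite (and is glossed over in the paper's own, terser, argument).
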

\begin{proof}
  By induction on $n$.  This is trivial when $n=0$.  Suppose $n>0$; let $\Theta^\prime=\Theta_{n-1}$.  The inference rule between $\Theta_n$ and $\Theta_{n-1}$ is one which can occur in an $r+1$-deduction, so $\Theta^\prime_{\leq r}\subseteq\Theta_{\leq r}$.  Therefore the inductive hypothesis applies to $\Theta^\prime$, and there is a deduction of $\Sigma$ from $\Theta^\prime\cup\Sigma$.  Consider the inference between $\Theta$ and $\Theta^\prime$:
  \begin{itemize}
  \item $Cut_e$, $Cut^{\Omega}_e$: Then $(e,u)\in\Theta_S$ for some $u$, and since $rk(e)\leq r$, $(e,u)\in\Sigma_S$, so $\Theta^\prime\cup\Sigma=\Theta\cup\Sigma$.
  \item $Fr_e$: Since $rk(e)>r$, $e\not\in\dom(\Sigma_S)$, so an $Fr_e$ axiom derives $\Theta^\prime\cup\Sigma$ from $\Theta\cup\Sigma$.
  \item $H_{e,u}$: $\Theta=(e,u),\Upsilon_{\leq rk(e)}$ where $\Theta^\prime=(e,?,t),\Upsilon$.  Since $rk(e)\geq r+1$, $(e,?)\not\in\Sigma_S$, so $H_{e,u}$ gives a deduction of $\Theta^\prime\cup\Sigma$ from $\Theta\cup\Sigma$.
  \end{itemize}
\end{proof}

\begin{lemma}\label{thm:f_to_t}
  Let $d$ be a deduction of $(e,u,f),\Theta$.  Then there is a derivation $d'$ of $(e,u,t),\Theta$ from sequents $\Sigma$ where the $H$-rule applies and $e$ is active, such that every inference rule in $d'$ appears in $d$.
\end{lemma}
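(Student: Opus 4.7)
The plan is to induct on $d$, producing $d'$ by changing $\Theta_F(e)$ from $f$ to $t$ in every sequent of $d$ in which $e$ appears, leaving all other data ($S, P, V$, and the rest of $F$) unchanged. The observation driving the construction is that the $F$-component is read by rules in only a monotone way: the $AxH$ axiom demands some active expression be marked $f$, while the $H$ inference demands every active expression be marked $t$. So shifting $F(e)$ from $f$ to $t$ can only invalidate instances of $AxH$; all other rule conditions are preserved.

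For the inductive step, I split on the final inference of $d$. The axioms $AxF$ and $AxS$ depend only on the $S$-component, so they transfer verbatim. At an instance of $AxH_{e',v}$, the stipulations that the $H$-step applies and that $e'=e(\Psi)$, $v=v(\Psi)$ are independent of $F$. If the required active $f$-marked witness in the original sequent can be chosen distinct from $e$, the $AxH$ axiom still applies after relabeling. Otherwise $e$ itself must have been the unique such witness, in which case $e$ is active at the modified sequent, the $H$-step applies there, and the modified sequent is exactly the kind of assumption $\Sigma$ permitted in the lemma's conclusion; we take it as a leaf of $d'$.

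For the inference rules $Cut_{e'}$, $CutFr_{e'}$, their $\Omega$-variants, and $Fr_{e'}$, the main expression $e'$ satisfies $e'\notin\dom(\Theta)$, and hence $e'\neq e$. Every premise still contains $e$ with unchanged value, so the inductive hypothesis applied to each premise, followed by a fresh application of the same inference rule, produces the required segment of $d'$. At an $H_{e',v}$ inference with conclusion $(e',?,t),\Upsilon$ containing $e$, the side condition that every active expression in the conclusion be marked $t$ forces $e$ to be inactive at that conclusion in the original $d$; inactivity depends only on $S$, $P$, $V$, so this property survives the relabeling, and the modified $H$-inference remains valid. If the $H$-step purges $e$ when forming the premise $H((e',?,t),\Upsilon)$ (possible when $rk(e)>rk(e')$ outside of the exceptional rank-$\Omega$ cases), the subtree above does not mention $e$ and is copied without modification; otherwise the inductive hypothesis applies to the premise.

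The main care point is the bookkeeping around the $H$-inference side condition, which is handled by the monotonicity observation above. Every inference used in $d'$ is the same kind, with the same main expression, as the corresponding inference in $d$, so the final clause of the lemma holds automatically, and the only leaves introduced beyond those of $d$ are the modified $AxH$ sequents, which are precisely the allowed assumptions $\Sigma$.
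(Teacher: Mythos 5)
Your proposal is correct and follows essentially the same route as the paper's proof: induct on $d$, observe that only the $AxH$ axioms read the $f$-marking of $e$, carry every other inference over unchanged (leaving subtrees untouched once an $H$-step purges $e$), and at an $AxH$ instance either keep the axiom (when an active $f$-marked witness other than $e$ exists) or stop at the relabeled sequent, which is then a permitted leaf since $e$ is active there and the $H$-step applies. Your treatment is somewhat more explicit about the $H$-inference side condition and the purge case, but the argument is the same.
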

\begin{proof}
  By induction on $d$.  Consider the last inference; if the last inference is anything other than an $AxH_{e,v}$, we apply the inductive hypothesis to all premises and then the same inference rule.

  If $(e,u,f),\Theta$ is an instance of $AxH_{e,v}$ and $e$ is not active then $(e,u,t),\Theta$ is also an instance of $AxH_{e,v}$.  If $e$ is active then we may leave $(e,u,t),\Theta$ as a leaf.
\end{proof}

\begin{theorem}
  Let $d$ be a derivation of $\Theta$ ending in a $Cut_e$ with $rk(e)=r\neq\Omega$ such that the immediate subderivations are $r^+$-derivations and there is an $r+1$-path for the endsequent.  Then there is an $r^+$-derivation $d'$ of $\Theta$.
\end{theorem}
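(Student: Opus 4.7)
The plan is to convert the final $Cut_e$ inference into a $CutFr_e$ inference, which is permitted in an $r^+$-derivation. Since $CutFr_e$ requires the $?$-branch to be labelled $t$ rather than $f$, I first apply Lemma \ref{thm:f_to_t} to the subderivation $d_?$ to obtain a deduction $d'_?$ of $(e,?,t),\Theta$; its new leaves are sequents $\Delta$ at which the $H$-rule applies and $e$ is active. Because the inference rules of $d'_?$ are a subset of those of $d_?$, it remains an $r^+$-deduction away from these open leaves, and since $rk(e) = r \neq \Omega$ the only active expression at $\Delta$ is $e$ itself. I close each leaf by an $H_{e,v}$ inference with $v = v(\Delta)$; its unique premise is $\Delta^* := H((e,?,t),\Delta)$, a correct sequent of rank $\leq r$ containing $(e,v)$.

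The heart of the proof is to produce, for each such $\Delta^*$, an $r^+$-derivation built from the given $d_v$ (an $r^+$-derivation of $(e,v),\Theta$) together with the given $r+1$-path $p$ ending at $\Theta$. By Lemma \ref{thm:pred_tail_char}, $\Theta t > r$ and $\Theta f \leq r$; combined with Lemma \ref{thm:pred_char} applied to $d_?$, this yields $\Theta_{\leq r} \subseteq \Delta$ and $(\Delta f)_{\geq r} \subseteq (e,?,f),\Theta$, from which one checks that Lemma \ref{thm:pred_compat} applies to $d_v$ and $\Delta^*$. That lemma produces an $r^+$-derivation $d_v \cup \Delta^*$ of $(e,v),\Theta \cup \Delta^*$. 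To trim the rank ${>}r$ material still sitting in $\Theta$, I extend $p$ by one step to $(e,v),\Theta$ via the same $Cut_e$ that appears in $d$ (legal in an $r+1$-deduction since $rk(e) = r < r+1$) and invoke Lemma \ref{thm:pred_rep} with this extended path and side sequent $\Delta^*$. This yields a deduction of $\Delta^*$ from $(e,v),\Theta \cup \Delta^*$ consisting only of $Fr$ and $H$ inferences of rank $>r$, and splicing $d_v \cup \Delta^*$ above this gives the required $r^+$-derivation of $\Delta^*$.

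Plugging these derivations above the $H_{e,v}$ inferences closes every leaf of $d'_?$, yielding an $r^+$-derivation of $(e,?,t),\Theta$. A final $CutFr_e$ inference together with the original $d_u$'s then produces the $r^+$-derivation $d'$ of $\Theta$. The rank bookkeeping is routine: all cuts come either from $d_v$ or $d'_?$ (already of rank ${<}r$) or from the final $CutFr_e$ (of rank exactly $r$); the only new $H$ inferences added have rank exactly $r$; and the inferences contributed by Lemma \ref{thm:pred_rep} are $Fr$ or $H$ of rank ${>}r$. The main obstacle is keeping the various compatibility and rank-side conditions straight as Lemma \ref{thm:f_to_t} and the $H$-rule move $F$-labels around: at each leaf $\Delta$ one must verify that the $(e,v)$ installed by the $H$-rule does not clash with $\Theta$, that $\Delta^*$ is correct of rank $\leq r$ and satisfies the $f$-containment needed by Lemma \ref{thm:pred_compat}, and that the one-step extension of $p$ genuinely qualifies as an $r+1$-path meeting the hypotheses of Lemma \ref{thm:pred_rep}.
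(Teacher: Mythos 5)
Your proposal is correct and follows essentially the same route as the paper's proof: apply Lemma \ref{thm:f_to_t} to $d_?$, close each resulting leaf with an $H_{e,v}$ inference, obtain the derivation of its premise by combining Lemma \ref{thm:pred_rep} (repetition along the $r+1$-path) with Lemma \ref{thm:pred_compat} applied to $d_v$, and finish with a $CutFr_e$. The only (immaterial) difference is that you extend the path by the final $Cut_e$ step before invoking Lemma \ref{thm:pred_rep}, whereas the paper applies it to the path ending at $\Theta$; since $(e,v)$ already lies in $\Delta^*$, the extra step is absorbed in the $Cut$ case of that lemma.
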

\begin{proof}
$d_?$ is an $r^+$-derivation of $(e,?,f),\Theta$.  By Lemma \ref{thm:f_to_t}, there is an $r^+$-deduction $d'_?$ of $(e,?,t),\Theta$ from sequents $(e,?,t),\Upsilon$ where $e$ is active.

Consider some such sequent in $d'_?$.  We may derive it by an $H_{e,u}$ inference from $\Sigma=(e,u),\Upsilon_{\leq r}$.  Since $\Sigma$ is the result of an $H$-step, $\Sigma$ is correct, by Lemma \ref{thm:pred_tail_char} we have $\Theta_{\leq r}t=\emptyset$, so by Lemma \ref{thm:pred_char}, we have $\Theta_{\leq r}\subseteq \Sigma$.  By Lemma \ref{thm:pred_rep}, $\Sigma\cup\Theta$ is defined, and there is a deduction of $\Sigma$ from $\Sigma\cup\Theta$.

$d_u$ is a deduction of $(e,u),\Theta$, and by Lemma \ref{thm:pred_char} we have $(\Sigma f)_{\geq r}\subseteq\Theta$ and, since $\Theta t\geq r$, also $\Sigma t\geq r$, so by Lemma \ref{thm:pred_compat}, $d_u\cup\Sigma$ is an $r^+$-derivation of $\Theta\cup\Sigma$.

So we have an $r^+$-derivation of each leaf of $d'_?$, giving an $r^+$-derivation $d''_?$ of $(e,?,t),\Theta$.  We obtain the desired deduction $d'$ by applying a $CutFr_e$ inference to $d''_?$ and the $d_u$.
\end{proof}

\begin{theorem}\label{thm:pred_reduction}
  If $d$ is an $r+1$-derivation of $\Theta$ with $r\neq\Omega$ and $\Theta$ has an $r+1$-path then there is an $r^+$-derivation $d'$ of $\Theta$.
\end{theorem}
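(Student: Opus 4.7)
The plan is a straightforward induction on $d$: apply the previous theorem at any $Cut_e$ of rank exactly $r$, and otherwise carry the last inference through unchanged. At an axiom, $d$ is already an $r^+$-derivation, since axioms contain no $Cut$, $CutFr$, $Fr$, or $H$ inferences and therefore satisfy the rank conditions vacuously.

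For the inductive step, suppose $d$ ends in an inference $\mathrm{I}$ with premises $d_i$ deriving $\Theta_i$. First I would check that each $\Theta_i$ inherits an $r+1$-path: if $D$ is an $r+1$-deduction witnessing the $r+1$-path for $\Theta$, then grafting $d$ in place of the subdeduction of $D$ rooted at $\Theta$ yields an $r+1$-deduction (every inference in $d$ is valid in an $r+1$-deduction, since its cuts have rank $\leq r$, its $Fr$ and $H$ inferences have rank $\geq r+1$, and it contains no $CutFr$). Extending the original path by one step to $\Theta_i$ then gives the required $r+1$-path. With this in hand, the inductive hypothesis applied to each $d_i$ produces an $r^+$-derivation $d'_i$ of $\Theta_i$.

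If $\mathrm{I}$ is not a $Cut_e$ with $rk(e)=r$, then $\mathrm{I}$ is already permitted in an $r^+$-derivation: any $Cut$ (or $Cut^{\Omega,\mathrm{term}}$, $Cut^{\Omega,\mathrm{form}}$) in $d$ has rank strictly less than $r$ (since $r\neq\Omega$, no $Cut^{\Omega}$-variant can have rank $r$, and the $Cut_e$ case of rank $r$ is excluded here); the $Fr$ and $H$ inferences in $d$ have rank $\geq r+1>r$; and there are no $CutFr$ inferences in an $r+1$-derivation. So I would simply reapply $\mathrm{I}$ to the $d'_i$ to obtain the desired $r^+$-derivation of $\Theta$.

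The essential case is $\mathrm{I}=Cut_e$ with $rk(e)=r$. Here I would assemble the derivation $d''$ of $\Theta$ ending in this $Cut_e$ with the $r^+$-derivations $d'_i$ as its immediate subderivations; since $\Theta$ has an $r+1$-path by hypothesis, the previous theorem applies to $d''$ and produces the desired $r^+$-derivation. The main obstacle---replacing $Cut_e$ by $CutFr_e$ while rebuilding each invalidated $AxH_{e,u}$ leaf into an $H_{e,u}$ inference via Lemma \ref{thm:f_to_t}, Lemma \ref{thm:pred_rep}, and Lemma \ref{thm:pred_compat}---has already been discharged in that theorem, so the present induction contributes essentially only the bookkeeping of propagating the $r+1$-path hypothesis up the derivation.
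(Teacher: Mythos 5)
Your proposal is correct and follows essentially the same route as the paper: induct on $d$, pass every inference other than a rank-$r$ $Cut_e$ through unchanged, and invoke the preceding theorem at a rank-$r$ $Cut_e$. The only addition is your explicit grafting argument that each premise inherits an $r+1$-path, a detail the paper leaves implicit but which you handle correctly.
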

\begin{proof}
  By induction on $d$.  Consider the last inference of $d$; if the last inference is anything other than a $Cut_e$ with $rk(e)=r$, the claim is immediate from the inductive hypothesis.

Suppose the last inference of $d$ is a $Cut_e$ with $rk(e)=r$.  We apply the inductive hypothesis to obtain $r^+$-deductions $d'_u$ of $(e,u),\Theta$.  We may then apply the previous theorem to obtain the desired $r^+$-derivation.
\end{proof}

We also need a slightly more complicated version of this argument when we have an $\Omega$-derivation.
\begin{theorem}\label{thm:omega_reduction}
If $d$ is an $\Omega$-derivation of $\Theta$ and $\Theta$ has a path consisting of $H$ and $Fr$ rules of rank $\geq\Omega$ and $Cut$ rules of rank $\leq r$ then there is an $r$-derivation $d'$ of $\Theta$.
\end{theorem}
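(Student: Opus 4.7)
The plan is to iteratively apply the predicative cut-elimination machinery of Theorem \ref{thm:pred_reduction}, followed by the pruning of Lemma \ref{thm:pred_prune}, to drive the maximum cut rank in $d$ below $r$ in finitely many steps.

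Since $d$ is an $\Omega$-derivation, $CutFr(d)<0$ and every $Cut$ inference in $d$ has rank strictly less than $\Omega$; because $d$ is a finite well-founded tree, if $d$ has any cut at all there is a maximum cut rank $M<\Omega$. If $d$ has no cut of rank $\geq r$, then the constraints $Cut(d)<r$, $CutFr(d)<0$, $Fr(d)\geq\Omega\geq r$, and $H(d)\geq\Omega\geq r$ show that $d$ is already an $r$-derivation and there is nothing to prove.

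Otherwise, let $M\geq r$ be the maximum cut rank in $d$, and induct on $M-r$. View $d$ as an $(M+1)$-derivation, which is immediate from $Cut(d)\leq M$, the absence of $CutFr$, and $Fr(d),H(d)\geq\Omega>M+1$. The hypothesized path for $\Theta$ has $Cut$ inferences of rank $\leq r\leq M$ and $H$- and $Fr$-inferences of rank $\geq\Omega>M+1$, so it qualifies as an $(M+1)$-path. Applying Theorem \ref{thm:pred_reduction} yields an $M^+$-derivation of $\Theta$, which Lemma \ref{thm:pred_prune} prunes to an $M$-derivation $d_1$ of $\Theta$. The new derivation has maximum cut rank at most $M-1$; either this is already below $r$ and $d_1$ is the desired $r$-derivation, or I view $d_1$ as an $M=(M-1)+1$-derivation, observe that the fixed path still qualifies as an $M$-path since $r\leq M-1<M$ and $\Omega>M$, and invoke the inductive hypothesis.

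The main obstacle is essentially bookkeeping: at each iteration I must confirm both that the current derivation satisfies the four conditions for being an $(m+1)$-derivation at the appropriate level $m$, and that the fixed path for $\Theta$ still meets the definition of an $(m+1)$-path. Both hold throughout because the path's strong inequalities $Cut\leq r$ and $H,Fr\geq\Omega$ are preserved verbatim as the induction parameter $m$ descends from $M$ down to $r$, while each successive derivation produced by reduction-plus-pruning has strictly smaller cut ranks but retains $Fr$- and $H$-ranks bounded below by its current level.
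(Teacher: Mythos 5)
There is a genuine gap at the very first step: you assert that ``because $d$ is a finite well-founded tree, if $d$ has any cut at all there is a maximum cut rank $M<\Omega$.'' But derivations in this system are not finite trees --- every $Cut_e$ (and $CutFr_e$) inference has one premise for each $u\in\mathbb{N}$, so a derivation is an infinitely-branching well-founded tree. Such a tree can contain cuts of unboundedly many finite ranks (e.g.\ the $u$-th branch of a cut at the root could itself contain a cut of rank $u$), in which case no maximum $M$ exists and your induction on $M-r$ never gets started. Nothing in the hypothesis of the theorem bounds the cut ranks of $d$; only the path down to $\emptyset$ is constrained. So the global strategy ``find the top rank, knock it down by one, repeat'' is not available for an arbitrary $\Omega$-derivation.

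The paper avoids this by working locally: it proceeds by structural induction on $d$, and only when the \emph{last} inference is a $Cut_e$ with $rk(e)\geq r$ does it use the rank of that particular cut. It first applies the inductive hypothesis to each premise $(e,u),\Theta$ (whose extended path has cuts of rank $\leq rk(e)$) to get $rk(e)+1$-derivations, reassembles them with the $Cut_e$, and then runs your engine --- $rk(e)-r+1$ rounds of Theorem \ref{thm:pred_reduction} followed by Lemma \ref{thm:pred_prune} --- on the resulting $rk(e)+1$-derivation. Your computation of why each round of reduction-plus-pruning preserves the relevant side conditions is fine, and your argument would be correct under the added hypothesis that the cut ranks of $d$ are bounded; but to prove the theorem as stated you need to replace the outer induction on $M-r$ by structural induction on $d$.
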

\begin{proof}
  By induction on $d$.  Note that the path is an $s$-path for all $s>r$.  Consider the last inference of $d$; if the last inference is anything other than a $Cut_e$ with $rk(e)\geq r$, the claim is immediate from the inductive hypothesis.

Suppose the last inference of $d$ is a $Cut_e$ with $rk(e)\geq r$.  By the inductive hypothesis, there are $rk(e)+1$-derivations $d'_e$ of each $(e,u),\Theta$.  Combined with an application of $Cut_e$, we have an $rk(e)+1$-derivation of $\Theta$ with an $rk(e)+1$-path, so by $rk(e)-r+1$-applications of Theorem \ref{thm:pred_reduction} and Lemma \ref{thm:pred_prune}, there is an $r$-derivation of $\Theta$.
\end{proof}

\section{Impredicative Cut-Elimination}

\subsection{Outline of Elimination of rank $\Omega$ Cuts}

When $rk(e)\neq\Omega$, the formula $F(e,u)$ depends only on expressions of rank $<rk(e)$.  At rank $\Omega$, this breaks down, and the dependencies can even become circular.  In the method of local predicativity, this is resolved by stratifying the rank $\Omega$ expressions so that we can assign ranks which are countable ordinals.  As noted in the introduction, we avoid that approach here, but the idea that this should be possible motivates our construction.

Informally, we can identify which rank $\Omega$ expressions have an ``effective rank'' which is higher or lower than another expression.  By effective rank, we do not mean a formal assignment of ranks to expressions; rather, we mean that we can identify which expressions \emph{behave as if} they have lower rank than others in the context of a particular historical substitutions.  The essential behavior of rank is the following:
\begin{quote}
  $e$ has lower effective rank than $e'$ if and only if, in an $H$-step with main expression $e$, $e'$ is removed after the $H$-step.
\end{quote}
Our procedure for eliminating the rank $\Omega$ cuts is to arrange the process in an order which respects the effective rank.

The main complication in our setting is that the effective rank of $e$ is determined at the moment an $H$-step occurs---that is, we cannot expect to go through an $\Omega+1$-derivation and assign effective ranks to individual $Cut$ inferences; rather, we must assign the effective rank at $AxH$ axioms.

Given an $\Omega+1$-derivation of $\emptyset$, we first eliminate all $AxH_{c_n}$ axioms $\Theta$ where $(n\in I,\top)\not\in\Theta$; in such axioms, $c_n$ always has maximal effective rank (since if $(e,u)\in\Theta$ with $u\neq{?}$ and $rk(e)\leq\Omega$, then $(e,u)$ is also present in $H(\Theta)$).  Along the way we will convert all $Cut^{\Omega,\mathrm{term}}$ inferences into $CutFr^{\Omega,\mathrm{term}}$, but we will not prune them, since they will also be needed in the following step.

We describe what this process looks like; the main difficulties are caused by eliminating $AxH_{c_n}$ where $(c_m,?)$ is present for several other values of $c_m$.  In particular, different things can happen depending on whether the $Cut$ inference introducing $c_m$ occurs above or below the $Cut$ inference introducing $c_n$.  Our definitions have been rigged to make this go as smoothly as possible, as we now illustrate.  For simplicity, we illustrate the case with three $Cut^{\Omega,\mathrm{term}}$ inferences in immediate succession where we eliminate the middle one, since this demonstrates all the important features.

We begin with an $\Omega+1$ derivation like this:
  \begin{prooftree*}
\Infer0[$AxH_{c_n,u}$]{(c_k,?,f),(c_n,?,f),(c_m,?,f),\Sigma}
\Ellipsis{}{(c_k,?,f),(c_n,?,f),(c_m,?,f),\Theta}
\Hypo{\ldots}
\Infer2[$Cut_{c_m}$]{(c_k,?,f),(c_n,?,f),\Theta}
\Hypo{\ldots}
\Infer2[$Cut_{c_n}$]{(c_k,?,f),\Theta}
\Hypo{\ldots}
\Infer2[$Cut_{c_k}$]{\Theta}
\Ellipsis{}{\emptyset}

  \end{prooftree*}


After elimination, we get the derivation

\begin{prooftree*}
\Hypo{}
\Ellipsis{}{(c_k,?,t),(c_n,u,t),\Theta\cup\Sigma^*}
\Hypo{\ldots}
\Infer2[$CutFr_{c_k}$]{(c_n,u,t),\Theta\cup\Sigma^*}
\Ellipsis{}{(c_n,u,t),\Sigma^*}
\Infer1[$H_{c_n,u}$]{(c_k,?,t),(c_n,?,t),(c_m,?,t),\Sigma}
  \Ellipsis{}{(c_k,?,t),(c_n,?,t),(c_m,?,t),\Theta}
\Hypo{\ldots}
\Infer2[$CutFr_{c_m}$]{(c_k,?,t),(c_n,?,t),\Theta}
\Hypo{\ldots}
\Infer2[$CutFr_{c_n}$]{(c_k,?,t),\Theta}
\Hypo{\ldots}
\Infer2[$CutFr_{c_k}$]{\Theta}
\Ellipsis{}{\emptyset}
\end{prooftree*}

The main complication is that we need to insert a $CutFr_{c_k}$ when we carry out repetition.  In order to have the extra derivations needed to construct the side branches, we will use a more complicated inductive hypothesis.

We turn to the second stage, where we eliminate all remaining $AxH$ axioms of rank $\Omega$---that is, all the $AxH$ axioms where a formula is changed (either from negative to positive or positive to negative).  Here we need some additional work to ensure that we eliminate $AxH$ axioms in the right order.  The basic issue is this: suppose we have an $AxH_{n\in I}$ axiom $\Theta$; we expect to replace it with an $H$-inference.  As in predicative cut-elimination, we will have a derivation $d_\top$ which we intend to modify into a derivation of $H(\Theta)$.  The issue is that, in the derivation above $H(\Theta)$, all $H$-steps which keep $(n\in I,\top)$ have higher effective rank than $n\in I$, while all $H$-rules for $c_n$ have lower effective rank.  This means that when we place $d_\top$ above $H(\Theta)$, we should have already eliminated all $AxH$ axioms $\Sigma$ of rank $\Omega$ such that $n\in I$ is in $H(\Sigma)_P$; on the other hand, any $AxH_{c_m}$ axiom where $m=n$ or $m\in I$ appears before $n\in I$ in $\Theta_P$ should be eliminated later.

The simplest case goes as follows.  Suppose we have a derivation where this situation occurs.  (The order of the $CutFr$ and $Cut$ is not particularly significant.)
\begin{prooftree*}\tiny
  \Infer0[$AxH_{n\in I,\top}$]{(c_n,?,t),(n\in I,?,f),\Sigma}
  \Ellipsis{}{(c_n,?,t),(n\in I,?,f),\Theta}
  \Hypo{\ldots}
  \Hypo{\phantom{(c_n,u,f),(n\in I,?,f),\Theta}\tikzmark{q1}}
  \Ellipsis{}{(c_n,u,f),(n\in I,?,f),\Theta\tikzmark{q2}}
  \Infer3[$CutFr$]{(n\in I,?,f),\Theta}
  \Hypo{\ldots}
  \Hypo{\phantom{(n\in I,\top,f,P,V),\Theta}\tikzmark{q3}}
  \Ellipsis{}{(n\in I,\top,f,P,V),\Theta\tikzmark{q4}}
  \Infer3[$Cut$]{\Theta}
  \Ellipsis{}{\emptyset}
\end{prooftree*}

  \begin{tikzpicture}[remember picture, overlay]
  \draw[decorate, decoration = {brace, amplitude=2pt}] (pic cs:q1) -- ($ (pic cs:q2) + (0,.23) $) node[midway, right=3pt] {\tiny$d_u$};
  \draw[decorate, decoration = {brace, amplitude=2pt}] (pic cs:q3) -- ($ (pic cs:q4) + (0,.23) $) node[midway, right=3pt] {\tiny$d_{P,V}$};
  \end{tikzpicture}

As we proceed inductively, we first apply cut-elimination to the side-branch $d_{P,V}$; at the conclusion of this process we have an $\Omega$-deduction $d'_{P,V}$ of $(n\in I,\top,f,P,V),\Theta$ with the property that every $AxH$ axiom $\Upsilon$ of rank $\Omega$ has $H$-expression $c_m$ with either $m=n$ or $m\in I$ appearing before $n\in I$ in $P$.  (In particular, $(n\in I,\top)$ will not be present in $H(\Upsilon)$.)

We \emph{do not} apply cut-elimination to $d_u$ yet; instead we carry the unmodified derivation $d_u$ along as part of the inductive process.  When we eliminate this $AxH$ axiom, we place $d'_{P,V}$ above it (using the same process as in the predicative case---repeating the path to $\emptyset$ and then taking a union with $d'_{P,V}$).  We also modify $d'_{P,V}$ by replacing $(n\in I,\top,f)$ with $(n\in I,\top,t)$.  This causes certain $AxH$ axioms to be invalid---specifically, exactly those axioms with $c_n$ as the $H$-expression.

We do a further elimination step with these axioms, using the derivation $d_u$ which we carried along.  As always we perform the repetition step and \emph{now} we apply our process inductively to $d_u$ (while simultaneously performing the union step).  This is the correct order---we should apply cut-elimination to $d_u$ after we have placed $d'_{P,V}$ on top of $\Sigma$, since this respects the effective ranks---but it leads to a complicated inductive ordering which is specified in Section \ref{sec:wellf}.

\subsection{Leaving Formulas Alone}

Throughout this subsection, when the $H$-step applies to $(e,?),\Upsilon$ where $e=e((e,?),\Upsilon)$ and $rk(e)=\Omega$, we will write $\Upsilon^*=H((e,?),\Upsilon)$.

We need a notion intermediate between the $\Omega+1$ and $\Omega^+$-deduction:
\begin{definition}
We say $d$ is an $\Omega^{*}$-deduction if:
\begin{itemize}
\item $Cut(d)\leq\Omega, CutFr(d)=\Omega, Fr(d)>\Omega, H(d)\geq\Omega$,
\item There are no $Cut^{\Omega,\mathrm{term}}$ inferences in $d$.
\end{itemize}




\end{definition}

We introduce the following, basically ad hoc, notion to help us keep track of the rank $\Omega$ elements which remain fixed in $\Omega^{*}$-deductions:
\begin{definition}
  We write $\Theta s=\Theta_{=\Omega}^-\cup\Theta_{=\Omega}^{+,\mathrm{form}}\cup\{(c_m,?)\in\Theta\mid (m\in I,\top)\in\Theta\}$.
\end{definition}
Note that $\Theta s$ is the part of $\Theta_{=\Omega}$ which is kept after an $H$-step with $H$-expression $c_m$ where $m\in I$ is not in $\Theta_P$.

We need an analog of Lemma \ref{thm:pred_compat} suitable for this context:
\begin{lemma}\label{thm:imp_compat}
  Suppose $d$ is an $\Omega^{*}$-derivation of $\Theta$ and $\Sigma$ is a sequent such that:
  \begin{itemize}
  \item $\Sigma$ is correct,
  \item $\Theta$ and $\Sigma$ are compatible,
  \item $\Sigma \leq \Omega$,
  \item $\Sigma_{=\Omega}^{+,\mathrm{term}}\subseteq \Sigma s\cap \Sigma t$,
  \item $\Sigma t\geq\Omega$,
  \item $(\Theta\cup\Sigma)^{\mathrm{form}}_{=\Omega}\subseteq(\Theta\cup\Sigma)f$.
  \end{itemize}
Then there is an $\Omega^{*}$-derivation $d\cup\Sigma$ of $\Theta\cup\Sigma$.
\end{lemma}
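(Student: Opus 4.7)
The plan is to induct on the structure of $d$, casing on its final inference in close analogy with Lemma~\ref{thm:pred_compat}. In each case either the principal expression is already present in $\Sigma$ (in which case the matching branch $d_u\cup\Sigma$ given by the inductive hypothesis suffices), or it is not (in which case the inductive hypothesis is applied to every premise and the same inference is reapplied). Axiom cases are immediate, since computational inconsistency and solving are preserved under extension. The hypotheses on $\Sigma$---correctness, $\Sigma\leq\Omega$, $\Sigma t\geq\Omega$, the inclusion $\Sigma^{+,\mathrm{term}}_{=\Omega}\subseteq\Sigma s\cap\Sigma t$, and the formula-fixation condition $(\Theta\cup\Sigma)^{\mathrm{form}}_{=\Omega}\subseteq(\Theta\cup\Sigma)f$---are designed so that each premise inherits them with respect to the same $\Sigma$ (or an obvious trimming of $\Sigma$).

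The predicative-style cases are essentially as before. For $Cut_e$ with $rk(e)<\Omega$, since $\Sigma t\geq\Omega$ we cannot have $e\in\Sigma t$, so the dichotomy from Lemma~\ref{thm:pred_compat} applies verbatim. For $Fr_e$ with $rk(e)>\Omega$, the condition $\Sigma\leq\Omega$ forces $e\notin\dom(\Sigma)$, so the inductive hypothesis goes straight through. The $Cut^{\Omega,\mathrm{form}}_e$ case mirrors the $Cut^{\Omega,\mathrm{form}}$ step in Lemma~\ref{thm:pred_compat}, absorbing any tail of $\Sigma_P$ beyond $\Theta_P$ into each new $P_e$ branch (with $V$ extended correspondingly). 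For $CutFr^{\Omega,\mathrm{term}}_{c_n}$, the hypothesis $\Sigma^{+,\mathrm{term}}_{=\Omega}\subseteq\Sigma s\cap\Sigma t$ ensures that any $(c_n,?)\in\Sigma$ is accompanied by $(n\in I,\top)\in\Sigma$, so all the $(c_n,u,f)$-branches produced by the inductive hypothesis remain computationally consistent once united with $\Sigma$, and the inference can be reapplied.

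The main obstacle is the $H_{e,v}$ case with $rk(e)=\Omega$. Here the formula-fixation hypothesis immediately rules out the subcase in which $e$ is a positive rank-$\Omega$ formula $n\in I$: then $(n\in I,?)\in\Theta$ with $\Theta_F(n\in I)=t$, placing a rank-$\Omega$ formula in $(\Theta\cup\Sigma)t$, contrary to the hypothesis. So $e=c_n$ is a negative rank-$\Omega$ term. Writing $\Theta=(c_n,?,t),\Upsilon$ and $\Theta'=\Upsilon^*$ using the trimming of Definition~\ref{def:H_Omega_neg}, the key step is to construct $\Sigma'$ by applying the same trimming to $\Sigma$: truncate $\Sigma_P$ to its longest initial segment not containing $n\in I$, discard the $(m\in I,\top)$ and the excess $(c_m,?)$ entries that the $H$-step drops, and reinstate the negative rank-$\Omega$ expressions stored in the corresponding $V$-entry (or, when $n\in I\notin\Sigma_P$, in $\Sigma^-_{=\Omega}$). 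A direct calculation then shows $H((c_n,?,t),\Theta\cup\Sigma)=\Theta'\cup\Sigma'$; Lemma~\ref{thm:low_rank_correct_Omega} together with the positive occurrence of $I$ in $A$ gives correctness of $\Sigma'$; and the remaining hypotheses of the lemma on $\Sigma'$ relative to $\Theta'$ fall out of the construction. The inductive hypothesis then delivers $d'\cup\Sigma'$ from $d'$, and reapplying $H_{e,v}$ produces the desired $d\cup\Sigma$. The substantive content of the lemma is the bookkeeping in this last case---verifying that the trimming commutes with taking unions, and that correctness and the formula-fixation condition survive passage from $\Sigma$ to $\Sigma'$---while the remaining cases are straightforward adaptations of the predicative argument.
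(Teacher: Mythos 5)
Your overall induction has the right shape, and the routine cases ($Cut_e$ with $rk(e)<\Omega$, $Fr_e$ of rank $>\Omega$, $Cut^{\Omega,\mathrm{form}}_e$, $H_e$ of rank $>\Omega$) do go through as you say. But in the two cases that carry the weight of the lemma your argument misses the actual difficulty.

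In the $CutFr^{\Omega,\mathrm{term}}_e$ case, computational consistency is not the issue. The problem arises when $e\in\dom(\Sigma)$ with $\Sigma_S(e)=u\neq{?}$ and $\Sigma_F(e)=t$; nothing in the hypotheses forbids this, and the lemma is used in exactly this situation (e.g.\ in Lemma \ref{thm:omega_star_red}, where $\Sigma$ contains $(e,v,t)$ with $v\neq{?}$ produced by an $H$-step). The matching branch $d_u$ ends in $(e,u,f),\Theta$, whose $F$-component conflicts with $\Sigma_F(e)=t$, so ``$d_u\cup\Sigma$'' is not even defined. One must first apply the inductive hypothesis with $\Sigma\setminus\{(e,u,t)\}$ and then invoke Lemma \ref{thm:f_to_t} to flip $e$ from $f$ to $t$, checking that every leaf this creates---a sequent where $e$ is active, hence whose $H$-expression is a formula---is an instance of $AxH$ because all rank-$\Omega$ formulas are fixed. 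Your proposal never invokes Lemma \ref{thm:f_to_t} in this case, so this subcase is simply missing.

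In the $H_{e,v}$ case of rank $\Omega$, the trimming construction you present as the key step is never needed. Since every rank-$\Omega$ formula of $\Theta\cup\Sigma$ is fixed, $n\in I$ cannot be active at $\Theta$; since the $H$-rule requires $n\in I\in\dom(\Theta)$, this forces $(n\in I,?)\in\Theta_S$, and compatibility then gives $(n\in I,\top)\notin\Sigma_S$ and $n\in I\notin(\Theta\cup\Sigma)_P$. So the $H$-step falls into the $P'=P$ branch of Definition \ref{def:H_Omega_neg}, nothing of $\Sigma$ is deleted, and the inductive hypothesis is applied with the \emph{same} $\Sigma$. What actually has to be verified---and what your sketch omits---is (i) that the $H$-rule still applies to $\Theta\cup\Sigma$, i.e.\ that every $(c_m,?)\in\Sigma$ has $m\in I\in\dom(\Theta\cup\Sigma)$, which is precisely what $\Sigma^{+,\mathrm{term}}_{=\Omega}\subseteq\Sigma s$ supplies, and (ii) that every expression active at $\Theta\cup\Sigma$ is marked $t$, which uses $\Sigma^{+,\mathrm{term}}_{=\Omega}\subseteq\Sigma t$. (Had genuine trimming been possible, your assertion that correctness and the remaining hypotheses on the trimmed $\Sigma'$ ``fall out of the construction'' would itself require a substantial argument.)
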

\begin{proof}
  By induction on $d$.  Consider the final inference of $d$.  If the final inference is a $Cut_e$, a $Cut^{\Omega,\mathrm{form}}_e$, an $Fr_e$, or an $H_e$ with $rk(e)>\Omega$, we proceed as in Lemma \ref{thm:pred_compat}.

In the case of an $H_e$ inference with $rk(e)=\Omega$, we have to note that since there are no temporary formulas in $\Theta$, the $H$-expression must be a term $c_n$.  Since the $H$-rule applies to $\Theta$, $n\in I$ must be in $\dom(\Theta)$, and since $n\in I$ cannot be active, $(n\in I,?)\in\Theta_S$.  Therefore $(n\in I,?)\in(\Theta\cup\Sigma)_S$, so the only active expressions $\Theta\cup\Sigma$ are $(\Theta\cup\Sigma)^{+,\mathrm{term}}_{=\Omega}\setminus(\Theta\cup\Sigma)s$.  If $(c_n,?)\in(\Theta\cup\Sigma)^{+,\mathrm{term}}_{=\Omega}\setminus(\Theta\cup\Sigma)s$ then either $(c_n,?)\in\Theta^{+,\mathrm{term}}_{=\Omega}\setminus\Theta s$ (in which case $\Theta_F(c_n)=t$ because $c_n$ is active in $\Theta$) or $(c_n,?)\in\Sigma^{+,\mathrm{term}}_{=\Omega}\subseteq \Sigma t$.  We must check that the $H$-rule still applies; the only possible obstacle would be if $(c_m,?)\in\Theta\cup\Sigma$ but $m\in I\not\in\dom(\Theta\cup\Sigma)$.  But if $(c_m,?)\in\Theta$, $m\in I\in\dom(\Theta)\subseteq\dom(\Theta\cup\Sigma)$ because the $H$-rule applies to $\Theta$, and if $(c_m,?)\in\Sigma$ then $(c_m,?)\in\Sigma s$, so $(m\in I,\top)\in\Sigma$.  So we may derive $\Theta\cup\Sigma$ by an $H$-rule from $\Theta'\cup\Sigma$.  (Note that, unlike the predicative case, we do not delete elements from $\Sigma$ when we apply the inductive hypothesis.)


  Suppose the final inference is a $CutFr^{\Omega,\mathrm{term}}_e$.  If $e\in\dom(\Sigma)$ and $\Sigma_S(e)={?}$ then we must have $\Sigma_S(e)=t$ (since $(e,?)\not\in\Theta_S$), so we may take $d_{?}\cup\Sigma$.

Suppose $e\in\dom(\Sigma)$ but $\Sigma_S(e)\neq{?}$.  If $\Sigma_F(e)=f$, we may take $d_{\Sigma_S(e)}\cup\Sigma$.  Suppose $\Sigma_F(e)=t$; we apply the inductive hypothesis to obtain a deduction $d_{\Sigma_S(e)}\cup(\Sigma\setminus\{(e,\Sigma_S(e),t)\})$ of $(e,\Sigma_S(e),f),\Theta\cup(\Sigma\setminus\{(e,\Sigma_S(e),t)\})$.  By Lemma \ref{thm:f_to_t} we obtain a deduction $d'$ of $\Theta\cup\Sigma$ from sequents where $e$ is active.  Suppose $e$ is active at some sequent $\Upsilon$; then $e(\Upsilon)$ is a formula, and therefore must be fixed in $\Upsilon$ (since $\Theta\cup\Sigma$ contains no temporary formulas, and no inference rules in an $\Omega^{*}$-derivation can introduce temporary formulas), so $\Upsilon$ is an instance of $AxH$.  
 Then $d'$ is the desired derivation.

  If $e\not\in\dom(\Sigma)$, we apply the inductive hypothesis to obtain $\Omega^{*}$-derivations $d'_u$ of $(e,u,i),\Theta\cup\Sigma$ and apply a $CutFr^{\Omega,\mathrm{term}}_e$ inference to obtain the desired derivation $d'$ of $\Theta\cup\Sigma$.  
\end{proof}

\begin{definition}
  If $\Theta$ is a sequent, we write $\Theta^{\mathrm{term}\rightarrow t}$ for the sequent where:
  \begin{itemize}
  \item $\Theta^{\mathrm{term}\rightarrow t}_S=\Theta_S, \Theta^{\mathrm{term}\rightarrow t}_P=\Theta_P, \Theta^{\mathrm{term}\rightarrow t}_V=\Theta_V$, and
  \item $\Theta^{\mathrm{term}\rightarrow t}_F(e)=\left\{\begin{array}{ll}
t&\text{if }rk(e)=\Omega\text{, }e\text{ is a term, and }\Theta_S(e)={?}\\
\Theta_F(e)&\text{otherwise}
\end{array}\right.$
  \end{itemize}
\end{definition}

\begin{lemma}\label{thm:imp1_rep_compat}
Suppose $(\Theta_0,\ldots,\Theta_n)$ is an $\Omega+1$-path in an $\Omega+1$-derivation with $\Theta=\Theta_n$ and $\Sigma$ is a correct sequent such that:
\begin{itemize}
\item $\Sigma\leq\Omega$,
\item $\Theta^{\mathrm{term}\rightarrow t}_{\leq\Omega}\setminus\Theta^{\mathrm{term}\rightarrow t}t\subseteq\Sigma$,
\item $\Theta^{-,\mathrm{term}}_{=\Omega}\subseteq\Sigma^{-,\mathrm{term}}_{=\Omega}$,
\item if $(e,u)\in\Sigma^{-,\mathrm{term}}_{=\Omega}$ with $u\neq{?}$ then $(e,?)\not\in\Theta$,
\item $\Theta_P$ is an initial segment of $\Sigma_P$, and
\item for each $m\in I$ in $\Theta_P$, $\Theta_V(m\in I)=\Sigma_V(m\in I)$.
\end{itemize}

Then $\Theta^{\mathrm{term}\rightarrow t}$ and $\Sigma$ are compatible.
\end{lemma}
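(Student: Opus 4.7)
My plan is to verify the four conditions defining compatibility of $\Theta^{\mathrm{term}\rightarrow t}$ and $\Sigma$ one at a time. The $P$ and $V$ conditions will be essentially immediate: since $\Theta^{\mathrm{term}\rightarrow t}_P=\Theta_P$ and $\Theta^{\mathrm{term}\rightarrow t}_V=\Theta_V$, the fifth and sixth hypotheses already give that $\Theta^{\mathrm{term}\rightarrow t}_P$ is an initial segment of $\Sigma_P$ and that the $V$-values even coincide on the shorter sequence. What remains is compatibility of $S$ and $F$.

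The key preliminary observation I would invoke is Lemma \ref{thm:pred_tail_char}: since $(\Theta_0,\ldots,\Theta_n)$ is an $\Omega+1$-path from $\emptyset$, we have $\Theta t > \Omega$ and $\Theta f \leq \Omega$, so every rank $\leq\Omega$ expression in $\Theta$ on which $\Theta_F$ is defined lies in $\Theta f$. I would then run a case analysis on $rk(e)$ for a potential conflict point $e\in\dom(\Theta_S)\cap\dom(\Sigma_S)$. For $rk(e)>\Omega$, the hypothesis $\Sigma\leq\Omega$ forces $e\not\in\dom(\Sigma)$, so there is nothing to check. For $rk(e)<\Omega$, or for $rk(e)=\Omega$ with $e$ a formula $n\in I$, the $\mathrm{term}\rightarrow t$ modification is trivial, so $\Theta^{\mathrm{term}\rightarrow t}_F(e)=f$, and the pair $(e,\Theta_S(e))$ sits in $\Theta^{\mathrm{term}\rightarrow t}_{\leq\Omega}\setminus\Theta^{\mathrm{term}\rightarrow t}t$; the second hypothesis then places this pair in $\Sigma_S$, forcing both $S$- and $F$-agreement.

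For $rk(e)=\Omega$ with $e=c_n$ a term, I would split further. When $\Theta_S(c_n)\neq{?}$, the third hypothesis $\Theta^{-,\mathrm{term}}_{=\Omega}\subseteq\Sigma^{-,\mathrm{term}}_{=\Omega}$ gives $(c_n,\Theta_S(c_n))\in\Sigma_S$ directly, and then agreement of the $S$-values is immediate while agreement of $F$-values follows because both sides assign $t$ to the newly flipped or unchanged label on such rank $\Omega$ terms. When $\Theta_S(c_n)={?}$, the fourth hypothesis rules out $\Sigma_S(c_n)$ taking any non-${?}$ value, so if $c_n\in\dom(\Sigma)$ it must also sit there with value ${?}$, matching the $t$-value that $\Theta^{\mathrm{term}\rightarrow t}_F$ assigns at such terms.

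\textbf{Main obstacle.} The delicate point is precisely the bookkeeping at rank $\Omega$ terms, since these are the expressions whose $F$-labels the operation $\mathrm{term}\rightarrow t$ alters. The hypotheses have been engineered so that the two sub-cases $(c_n,v)$ with $v\neq{?}$ and $(c_n,{?})$ are separately handled by the third and fourth clauses respectively; verifying that nothing slips through the seam between them is what the proof really amounts to. The rest of the compatibility check is then a routine consequence of Lemma \ref{thm:pred_tail_char} and the first two containment hypotheses.
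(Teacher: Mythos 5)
Your proof is correct in substance but takes a genuinely different route from the paper's. The paper proves this lemma by induction on $n$ along the path $(\Theta_0,\ldots,\Theta_n)$, splitting on the inference between $\Theta_{n-1}$ and $\Theta_n$: every inference other than $Cut^{\Omega,\mathrm{term}}_e$ is handled exactly as in Lemma \ref{thm:pred_rep}, and the $Cut^{\Omega,\mathrm{term}}_e$ case is dispatched precisely as you do, using the third hypothesis when the cut value is not ${?}$ and the fourth when it is. You instead flatten the induction: Lemma \ref{thm:pred_tail_char} already packages the only consequence of the path hypothesis that is needed ($\Theta t>\Omega$ and $\Theta f\leq\Omega$), after which a direct case split on $rk(e)$ for $e\in\dom(\Theta)\cap\dom(\Sigma)$ suffices, with the $P$- and $V$-clauses read off from the last two hypotheses. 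Your version isolates more cleanly where the path hypothesis enters; the paper's version has the advantage of running in lockstep with Lemma \ref{thm:imp1_rep}, which genuinely needs the step-by-step structure along the path.

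Two cautions. First, your sentence about the $F$-labels at rank $\Omega$ terms carrying a non-${?}$ value is garbled: for such $e$ the operation $\mathrm{term}\rightarrow t$ does \emph{not} flip the label, so $\Theta^{\mathrm{term}\rightarrow t}_F(e)=\Theta_F(e)=f$ by Lemma \ref{thm:pred_tail_char}, not $t$. Second, neither your argument nor the paper's actually derives agreement of the $F$-components from the stated hypotheses: the containments in the hypotheses constrain only $\Sigma_S$, not $\Sigma_F$, so the second compatibility clause is being taken on faith in both proofs (it does hold where the lemma is applied, since there $\Sigma$ arises from an $H$-step and inherits its labels from $\Theta$). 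Since the paper's own proof is no more careful on this point, this is not a defect of your argument relative to the intended one, but you should be aware that the genuinely load-bearing content is the $S$-clause, which you verify correctly.
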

\begin{proof}
  By induction on $n$.  This is trivial when $n=0$.  Suppose $n>0$; let $\Theta'=\Theta_{n-1}$.  Since the inference between $\Theta_{n-1}$ and $\Theta$ is one occurring in an $\Omega+1$-derivation, the inductive hypothesis applies, so $\Theta_{n-1}$ and $\Sigma$ are compatible.  If this inference is anything other than a $Cut^{\Omega,\mathrm{term}}_e$ inference, we proceed as in Lemma \ref{thm:pred_rep}.

If this inference is a $Cut^{\Omega,\mathrm{term}}_e$ inference, we have $(e,u)\in\Theta$.  If $u\neq{?}$ then $(e,u)\in\Sigma$.  If $u={?}$ and $(e,u)\in\Sigma$ then since $(e,?)\in\Theta$, $u={?}$.  Otherwise $e\not\in\dom(\Sigma)$.
\end{proof}

\begin{lemma}\label{thm:imp1_rep}
Suppose $(\Theta_0,\ldots,\Theta_n)$ is an $\Omega+1$-path with $\Theta=\Theta_n$ and $\Sigma$ is a correct sequent such that:
\begin{itemize}
\item $\Sigma\leq\Omega$,
\item $\Theta^{\mathrm{term}\rightarrow t}_{\leq\Omega}\setminus\Theta^{\mathrm{term}\rightarrow t}t\subseteq\Sigma$,
\item $\Theta^{-,\mathrm{term}}_{=\Omega}\subseteq\Sigma^{-,\mathrm{term}}_{=\Omega}$,
\item if $(e,u)\in\Sigma^{-,\mathrm{term}}_{=\Omega}$ with $u\neq{?}$ then $(e,?)\not\in\Theta$,
\item $\Sigma^{+,\mathrm{term}}_{=\Omega}\subseteq\Sigma s\cap\Sigma t$,
\item $\Sigma t\geq\Omega$,
\item $\Sigma^{\mathrm{form}}_{=\Omega}\subseteq\Sigma f$,
\item $\Theta_P$ is an initial segment of $\Sigma_P$, and
\item for each $m\in I$ in $\Theta_P$, $\Theta_V(m\in I)=\Sigma_V(m\in I)$.
\end{itemize}

  Suppose that for each $i<n$ such that $\Theta_{i+1}=(e,?,f),\Theta_i$ where $rk(e)=\Omega$ and $e$ is a term, there is a family $\{d_{e,v}\}$ such that each $d_{e,v}$ is an $\Omega^{*}$-derivation of $(e,v,f),\Theta^{\mathrm{term}\rightarrow t}_i$.





Then there is a derivation $d$ of $\Sigma$ from $\Theta^{\mathrm{term}\rightarrow t}\cup\Sigma$ consisting only of $H$ and $Fr$ inferences of rank $>\Omega$, and $CutFr^{\Omega,\mathrm{term}}$ inferences.
\end{lemma}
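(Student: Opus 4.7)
The plan is to proceed by induction on $n$ along the path, closely paralleling the structure of Lemma~\ref{thm:pred_rep} but with one genuinely new case for inferences of the form $Cut^{\Omega,\mathrm{term}}_e$. The base case $n=0$ is immediate since $\Theta_0 = \emptyset$ forces $\Theta^{\mathrm{term}\rightarrow t}\cup\Sigma = \Sigma$. For the inductive step, set $\Theta' = \Theta_{n-1}$. Lemma~\ref{thm:imp1_rep_compat} ensures that the hypotheses of the present lemma persist along the truncated path ending at $\Theta'$, so the inductive hypothesis furnishes a deduction of $\Sigma$ from $\Theta'^{\mathrm{term}\rightarrow t}\cup\Sigma$ built out of the permitted rules. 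It then suffices to produce a short deduction of $\Theta'^{\mathrm{term}\rightarrow t}\cup\Sigma$ whose non-axiom leaves are either $\Theta^{\mathrm{term}\rightarrow t}\cup\Sigma$ or axioms coming from the auxiliary derivations $d_{e,v}$, and concatenate it with the deduction supplied by the inductive hypothesis.

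I split on the type of the inference between $\Theta'$ and $\Theta$. For $Cut_e$ (necessarily with $rk(e)<\Omega$ in an $\Omega+1$-path) and for $Cut^{\Omega,\mathrm{form}}_e$, the condition $\Theta^{\mathrm{term}\rightarrow t}_{\leq\Omega}\setminus\Theta^{\mathrm{term}\rightarrow t} t \subseteq \Sigma$, sharpened in the formula subcase by $\Sigma^{\mathrm{form}}_{=\Omega}\subseteq\Sigma f$ and the initial-segment hypothesis on $\Theta_P$, guarantees that the expression added in passing from $\Theta'$ to $\Theta$ is already present in $\Sigma$ with a matching value and $F$-status, so $\Theta^{\mathrm{term}\rightarrow t}\cup\Sigma = \Theta'^{\mathrm{term}\rightarrow t}\cup\Sigma$ and no new inference needs to be inserted. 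For $Fr_e$ and $H_{e,v}$ with $rk(e)>\Omega$ I simply repeat the same inference, exactly as in Lemma~\ref{thm:pred_rep}; these rules are explicitly permitted in the target deduction.

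The essential new case is $\Theta = (e,?,f),\Theta'$ arising from a $Cut^{\Omega,\mathrm{term}}_e$ inference. Because $e$ is a rank-$\Omega$ term, the transformation $\mathrm{term}\rightarrow t$ flips its $F$-status, giving $\Theta^{\mathrm{term}\rightarrow t} = (e,?,t),\Theta'^{\mathrm{term}\rightarrow t}$. I derive $\Theta'^{\mathrm{term}\rightarrow t}\cup\Sigma$ by a single $CutFr^{\Omega,\mathrm{term}}_e$ inference whose $(e,?,t)$-premise is the leaf $\Theta^{\mathrm{term}\rightarrow t}\cup\Sigma$ itself, and whose $(e,v,f)$-premises (one for each numerical $v$) are obtained by applying Lemma~\ref{thm:imp_compat} to the hypothesized $\Omega^{*}$-derivation $d_{e,v}$ of $(e,v,f),\Theta'^{\mathrm{term}\rightarrow t}$ and the sequent $\Sigma$. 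This invocation of Lemma~\ref{thm:imp_compat} demands its preconditions on $\Sigma$ (correctness, compatibility with $(e,v,f),\Theta'^{\mathrm{term}\rightarrow t}$, $\Sigma\leq\Omega$, $\Sigma^{+,\mathrm{term}}_{=\Omega}\subseteq\Sigma s\cap\Sigma t$, $\Sigma t\geq\Omega$, and no temporary rank-$\Omega$ formula in the union), and these are exactly the side conditions imposed in the statement of the present lemma.

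The main obstacle I anticipate is compatibility in this new case: since $Cut^{\Omega,\mathrm{term}}_e$ introduces an $e$ not present in $\Theta'$, I must rule out a conflict between the value $v$ appearing in $d_{e,v}$ and any pair $(e,u)$ that $\Sigma$ might already carry. The clause ``if $(e,u)\in\Sigma^{-,\mathrm{term}}_{=\Omega}$ with $u\neq{?}$ then $(e,?)\not\in\Theta$'' is tailored to block this conflict at the endpoint of the path, while the inclusion $\Theta^{-,\mathrm{term}}_{=\Omega}\subseteq\Sigma^{-,\mathrm{term}}_{=\Omega}$ together with Lemma~\ref{thm:imp1_rep_compat} propagates the property down the path so that the assumption continues to hold as the induction moves through each $Cut^{\Omega,\mathrm{term}}$ node. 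Once these compatibility verifications are pinned down, the assembly of the deduction is routine.
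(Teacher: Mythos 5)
Your proposal is correct and follows essentially the same route as the paper: induction on $n$, repetition of the $Fr$/$H$ inferences of rank $>\Omega$, absorption of $Cut$ and $Cut^{\Omega,\mathrm{form}}$ expressions already present in $\Sigma$, and in the $Cut^{\Omega,\mathrm{term}}_e$ case a $CutFr^{\Omega,\mathrm{term}}_e$ whose $?$-branch is $\Theta^{\mathrm{term}\rightarrow t}\cup\Sigma$ and whose numeral branches are $d_{e,v}\cup\Sigma$ obtained from Lemma \ref{thm:imp_compat}, with compatibility supplied by Lemma \ref{thm:imp1_rep_compat}. The only (harmless) elision is that in the $Cut^{\Omega,\mathrm{term}}_e$ case you should also note the trivial sub-cases where the path passes through a numeral branch (so $(e,u)\in\Theta^{-,\mathrm{term}}_{=\Omega}\subseteq\Sigma$) or where $(e,?)\in\Sigma$; in both, the union is unchanged and no new inference is inserted.
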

\begin{proof}
By induction on $n$.  This is trivial when $n=0$.  Suppose $n>0$; let $\Theta'=\Theta_{n-1}$.  We have $\Theta'_{\leq \Omega}\subseteq\Theta_{\leq\Omega}$, so the inductive hypothesis applies to $\Theta'$, so there is a deduction of $\Sigma$ from $\Theta'\cup\Sigma$.  Consider the inference between $\Theta$ and $\Theta'$; if it is anything other than a $Cut^{\Omega,\mathrm{term}}_e$, we proceed as in Lemma \ref{thm:pred_rep}.

  Suppose the inference is a $Cut^{\Omega,\mathrm{term}}_e$, so some $(e,u)\in\Theta_S$.  If $u\neq {?}$ then $(e,u)\in\Sigma_S$ so $\Theta'\cup\Sigma=\Theta\cup\Sigma$.  If $u={?}$ then either $(e,u)\in\Sigma_S$, so $\Theta'\cup\Sigma=\Theta\cup\Sigma$, or $e\not\in\dom(\Sigma)$.  In the latter case, we derive $\Theta'\cup\Sigma$ by a $CutFr^{\Omega,\mathrm{term}}_e$ inference.  The $?$-branch is $\Theta\cup\Sigma$.

  To obtain the $u$-branches for $u\neq{?}$, observe that we have a derivation $d_{e,u}$ of $(e,u,f),\Theta^{\mathrm{term}\rightarrow t}_{n-1}$.   We attempt to apply Lemma \ref{thm:imp_compat} to obtain derivations $d_{e,u}\cup\Sigma$; by assumption $\Sigma$ is correct.  Since $\Theta^{\mathrm{term}\rightarrow t}$ and $\Sigma$ are compatible, $e\not\in\dom(\Sigma)$, $\Sigma\leq\Omega$, and $(\Theta_{n-1})_{\leq\Omega}\subseteq\Theta$, also $\Theta_{n-1}$ and $\Sigma$ are compatible.  By assumption, $\Sigma\leq\Omega$, $\Sigma^{+,\mathrm{term}}_{=\Omega}\subseteq\Sigma s\cap\Sigma t$, and $\Sigma t\geq \Omega$.  We also have $(\Theta_{n-1})^{\mathrm{form}}_{=\Omega}\subseteq\Theta_{n-1}f$ and $\Sigma^{\mathrm{form}}_{=\Omega}\subseteq\Sigma f$ by assumption.

Therefore a $CutFr^{\Omega,\mathrm{term}}_e$ inference applied to the derivations $d_{e,u}\cup\Sigma$ and $\Theta\cup\Sigma$ gives the desired derivation.
\end{proof}

\begin{lemma}\label{thm:omega_star_red}
  Suppose $d$ is an $\Omega+1$-derivation of $\Theta$ such that there is an $\Omega+1$-path $\emptyset=\Theta_0,\ldots,\Theta_n=\Theta$ for $\Theta$.

  Suppose that for each $i<n$ such that $\Theta_{i+1}=(e,?,f),\Theta_i$ where $rk(e)=\Omega$ and $e$ is a term, there is a family $\{d_{e,v}\}$ such that each $d_{e,v}$ is an $\Omega^{*}$-derivation of $(e,v,f),\Theta^{\mathrm{term}\rightarrow t}_i$.

Then there is an $\Omega^{*}$-derivation of $\Theta^{\mathrm{term}\rightarrow t}$.
\end{lemma}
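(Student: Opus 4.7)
The argument is the impredicative analog of Theorem \ref{thm:pred_reduction} and proceeds by induction on $d$, with cases on the last inference. For routine cases---where the last inference is $Cut_e$ with $rk(e)<\Omega$, $Cut^{\Omega,\mathrm{form}}_e$, $Fr_e$, or $H_{e,v}$---one applies the inductive hypothesis to each subderivation with the path extended by the corresponding premise, and then reapplies the same rule. None of these path extensions produces a new step of the form $(e,?,f),\Theta_i$ with $rk(e)=\Omega$ and $e$ a term, so the families $\{d_{e,v}\}$ provided at the outset suffice as input to the inductive hypothesis.

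The crucial inductive case is when $d$ ends in $Cut^{\Omega,\mathrm{term}}_e$. First, for each $u \in \mathbb{N}$, I apply the inductive hypothesis to the subderivation $d_u$ of $(e,u,f),\Theta$ (the extension of the path by $(e,u,f),\Theta_n$ does not trigger the family condition), obtaining an $\Omega^{*}$-derivation $d'_u$ of $(e,u,f),\Theta^{\mathrm{term}\rightarrow t}$. Then I apply the inductive hypothesis to $d_?$ with the newly produced collection $\{d'_u\}_{u \in \mathbb{N}}$ supplied as the family witnessing the new path step $\Theta_{n+1}=(e,?,f),\Theta_n$; since $((e,?,f),\Theta)^{\mathrm{term}\rightarrow t}=(e,?,t),\Theta^{\mathrm{term}\rightarrow t}$, this produces an $\Omega^{*}$-derivation $d'_?$ of $(e,?,t),\Theta^{\mathrm{term}\rightarrow t}$. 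A single $CutFr^{\Omega,\mathrm{term}}_e$ inference then combines $d'_?$ and the $d'_u$ into the required $\Omega^{*}$-derivation of $\Theta^{\mathrm{term}\rightarrow t}$.

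The other essential case is when $d$ is an axiom $AxH_{c_n,v}$ with $(n \in I,\top) \notin \Theta$. Here the $\cdot^{\mathrm{term}\rightarrow t}$-transformation invalidates the axiom, since its unique active expression is $c_n$ and its $F$-value is flipped from $f$ to $t$. Because $(c_n,?,f)\in\Theta$ can only have been introduced by a $Cut^{\Omega,\mathrm{term}}_{c_n}$ somewhere on the path, there is an index $i<n$ with $\Theta_{i+1}=(c_n,?,f),\Theta_i$, and the hypothesis supplies the family $\{d_{c_n,u}\}_{u \in \mathbb{N}}$ of $\Omega^{*}$-derivations of $(c_n,u,f),\Theta_i^{\mathrm{term}\rightarrow t}$. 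The plan is to replace the axiom by an $H_{c_n,v}$ inference---with conclusion $\Theta^{\mathrm{term}\rightarrow t}$ and premise $H(\Theta^{\mathrm{term}\rightarrow t})$---and to derive the premise via Lemma \ref{thm:imp1_rep} applied to the full path with $\Sigma$ taken to be (essentially) the rank $\leq \Omega$ portion of $H(\Theta^{\mathrm{term}\rightarrow t})$. The repetition lemma internally invokes Lemma \ref{thm:imp_compat} on the family $\{d_{c_n,u}\}$ to supply the $u$-branches of the $CutFr^{\Omega,\mathrm{term}}_{c_n}$ it produces from the original $Cut^{\Omega,\mathrm{term}}_{c_n}$ at step $i$. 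The remaining rank $\Omega$ axioms ($AxH_{n \in I,\top}$ and $AxH_{c_n,v}$ with $(n \in I,\top) \in \Theta$) are left in place, and one verifies in each case that some active witness with $F$-value $f$ is preserved by $\cdot^{\mathrm{term}\rightarrow t}$.

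The chief obstacle is this last case: one must choose $\Sigma$ to satisfy the rather lengthy side conditions of Lemma \ref{thm:imp1_rep}---in particular $\Sigma^{+,\mathrm{term}}_{=\Omega}\subseteq\Sigma s\cap\Sigma t$, $\Sigma^{\mathrm{form}}_{=\Omega}\subseteq\Sigma f$, and the agreement conditions on $\Sigma_P$ and $\Sigma_V$ with $\Theta_P$ and $\Theta_V$---and then verify that the resulting tree, with $H_{c_n,v}$ at the root followed by the $H$, $Fr$, and $CutFr^{\Omega,\mathrm{term}}$ inferences produced by the repetition (terminating in leaves derived from the $d_{c_n,u}$ via Lemma \ref{thm:imp_compat}), really does constitute a valid $\Omega^{*}$-derivation of $\Theta^{\mathrm{term}\rightarrow t}$.
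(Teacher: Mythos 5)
Your overall architecture matches the paper's: induction on $d$, passing the routine rules through, and in the $Cut^{\Omega,\mathrm{term}}_e$ case first applying the inductive hypothesis to the side branches $d_u$ and then feeding the resulting $\{d'_u\}$ to the inductive hypothesis for $d_?$ as the family for the new path step, closing with a $CutFr^{\Omega,\mathrm{term}}_e$. That part is exactly the paper's argument.

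The gap is in the $AxH_{c_n,v}$ case. You propose to apply Lemma \ref{thm:imp1_rep} to the \emph{full} path $\Theta_0,\ldots,\Theta_n=\Theta$ with $\Sigma$ essentially $H(\Theta^{\mathrm{term}\rightarrow t})=(c_n,v,t),\Theta^*$. This cannot work: $\Sigma$ contains $(c_n,v)$ with $v\neq{?}$ while $(c_n,?)\in\Theta$, so $\Theta^{\mathrm{term}\rightarrow t}$ and $\Sigma$ are not even compatible ($\Theta_S\cup\Sigma_S$ is not a function), the union $\Theta^{\mathrm{term}\rightarrow t}\cup\Sigma$ from which the repetition is supposed to start does not exist, and the hypothesis of Lemma \ref{thm:imp1_rep} that ``if $(e,u)\in\Sigma^{-,\mathrm{term}}_{=\Omega}$ with $u\neq{?}$ then $(e,?)\not\in\Theta$'' fails precisely at $c_n$. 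Relatedly, your picture of the repetition reconstructing a $CutFr^{\Omega,\mathrm{term}}_{c_n}$ at step $i$ with side branches from $\{d_{c_n,u}\}$ is not what that lemma does for an expression already in $\dom(\Sigma)$ with a committed value, and you never say how the single remaining open leaf of the repetition gets derived. The correct move is to truncate: apply Lemma \ref{thm:imp1_rep} only to the initial segment $\Theta_0,\ldots,\Theta_i$ ending \emph{just before} $(c_n,?,f)$ is introduced (so $(c_n,?)\not\in\Theta_i$ and the offending hypothesis holds), obtaining a deduction of $\Sigma$ from the leaf $\Theta_i^{\mathrm{term}\rightarrow t}\cup\Sigma$. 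That leaf is then closed using the one family member $d_{c_n,v}$ matching the $H$-value of the axiom: form $d_{c_n,v}\cup\Theta^*$ by Lemma \ref{thm:imp_compat} to get an $\Omega^*$-derivation of $(c_n,v,f),\Theta_i^{\mathrm{term}\rightarrow t}\cup\Theta^*$, and convert $(c_n,v,f)$ to $(c_n,v,t)$ by Lemma \ref{thm:f_to_t}, checking that any sequent where $c_n$ becomes active has a formula as $H$-expression and hence is an $AxH$ instance. The family $\{d_{e',u}\}$ for \emph{other} rank-$\Omega$ terms $e'$ introduced along $\Theta_0,\ldots,\Theta_i$ is what the repetition consumes internally; the family for $c_n$ itself is used only at the top, and only the member indexed by $v$.
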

\begin{proof}
  By induction on $d$.  We consider the final inference of $d$.  If $\Theta$ is an axiom other than $AxH$, $\Theta^{\mathrm{term}\rightarrow t}$ is an instance of the same axiom.  

Suppose the last inference is anything other than a $Cut^{\Omega,\mathrm{term}}_e$ or an $AxH_{e,v}$, so we infer $\Theta$ from $\{\Theta_i\}_{i\in\mathcal{I}}$ with an inference $\mathrm{I}$ where, since $\mathrm{I}$ appears in an $\Omega+1$-derivation, $\mathrm{I}$ is an $H_e$ or $Fr_e$ with $rk(e)>\Omega$, a $Cut_e$ with $rk(e)<\Omega$, or a $Cut^{\Omega,\mathrm{form}}_e$ inference.  
  So the inductive hypothesis applies to each $d_i$ to obtain $\Omega^{*}$-deductions of $\Theta^{\mathrm{term}\rightarrow t}_i$, and applying the same inference $\mathrm{I}$ gives the desired deduction of $\Theta^{\mathrm{term}\rightarrow t}$.

  Suppose the last inference is a $Cut^{\Omega,\mathrm{term}}_e$.  Then there are $\Omega+1$-derivations $d_u$ of $(e,u,f),\Theta$ for each $u\in\mathbb{N}$.  We may apply the inductive hypothesis to each side branch to obtain $\Omega^{*}$-deductions $d'_u$ of $(e,u,f),\Theta^{\mathrm{term}\rightarrow t}$.  We then apply the inductive hypothesis to the deduction $d_?$ of $(e,?,f),\Theta$ taking $d_{e,u}=d'_u$.  We combine these inferences with a $CutFr^{\Omega,\mathrm{term}}_e$. 

  Suppose the last inference is an $AxH_{e,v}$ axiom and whenever $e'$ is active in $\Theta^{\mathrm{term}\rightarrow t}$, $\Theta^{\mathrm{term}\rightarrow t}_F(e')=t$.  Since $\Theta$ is an instance of $AxH_{e,v}$, there must be some active $e'$ which is a term of rank $\Omega$.  Since no formula is temporary in $\Theta^{\mathrm{term}\rightarrow t}$, $e$ must be a term $c_n$ with $(n\in I,\top)\not\in\Theta_S$.  We may deduce $\Theta^{\mathrm{term}\rightarrow t}$ from a sequent $\Sigma=(e,v,t),\Theta^*$ by an $H_{e,v}$ inference.

First, we collect some facts about $\Sigma$: $\Sigma\leq\Omega$ since it is the premise of an $H$-rule of rank $\Omega$.  Since this was an $H$-rule for a term, also $\Sigma^{+,\mathrm{term}}_{=\Omega}\subseteq\Sigma s$; if $(e,?)\in\Sigma^{+,\mathrm{term}}_{=\Omega}$ then $(e,?)\in\Theta^{\mathrm{term}\rightarrow t}$, so $(e,?)\in\Sigma t$.  Since there is an $\Omega+1$-path for $\Theta$, $\Sigma t\geq\Omega$ and $\Sigma^{\mathrm{form}}_{=\Omega}\subseteq\Sigma f$.

  We have an $\Omega^*$-derivation $d_{e,v}$ of $(e,v,f),\Theta_i$ for some $i$.  We construct a derivation of $\Sigma$ from $\Theta^{\mathrm{term}\rightarrow t}_i\cup\Sigma$ using Lemma \ref{thm:imp1_rep}.  There is an $\Omega+1$-path for $\Theta_i$.  Note that $\Theta^{\mathrm{term}\rightarrow t}_{\leq\Omega}\setminus\Theta^{\mathrm{term}\rightarrow t} t\subseteq\Sigma$, so $(\Theta^{\mathrm{term}\rightarrow t}_i)_{\leq\Omega}\setminus\Theta^{\mathrm{term}\rightarrow t}_it\subseteq\Sigma$.  We have $(\Theta_i)^{-,\mathrm{term}}_{=\Omega}\subseteq\Theta^{-,\mathrm{term}}_{=\Omega}\subseteq\Sigma^{-,\mathrm{term}}_{=\Omega}$.  If $(e',?)\in(\Theta_i)^{+,\mathrm{term}}_{=\Omega}$ then $(e',?)\in\Theta$ and $e'\neq e$, so if $e'\in\dom(\Sigma)$ then $(e',?)\in\Sigma$.  Various properties of $\Sigma$ are checked above.  $(\Theta_i)_P$ is an initial segment of $\Theta_P=\Sigma_P$, and for each $m\in I$ in $\Theta_P$ we have $\Theta_V(m\in I)=\Sigma_V(m\in I)$.  By Lemma \ref{thm:imp1_rep_compat}, $\Sigma$ and $\Theta_i^{\mathrm{term}\rightarrow t}$ are compatible.  So Lemma \ref{thm:imp1_rep} applies, so $\Theta^{\mathrm{term}\rightarrow t}_i$ and $\Sigma$ are compatible and there is a deduction of $\Sigma$ from $\Theta^{\mathrm{term}\rightarrow t}_i\cup\Sigma$ consisting only of $H$ and $Fr$ inferences of rank $>\Omega$ and $CutFr^{\Omega,\mathrm{term}}$ inferences.

  It remains to find an $\Omega^{*}$-deduction of $\Theta^{\mathrm{term}\rightarrow t}_i\cup\Sigma$.  We have shown that $(e,v,f),\Theta^{\mathrm{term}\rightarrow t}_i$ and $\Theta^*$ are compatible, $\Theta^*\leq\Omega$, $(\Theta^*)^{+,\mathrm{term}}_{=\Omega}\subseteq\Theta^*s\cap\Theta^*t$, $\Theta^*t\geq\Omega$, and all formulas are fixed in both $\Theta^*$ and $\Theta_i$, so we may apply Lemma \ref{thm:imp_compat} to obtain an $\Omega^{*}$-deduction of $(e,v,f),\Theta^{\mathrm{term}\rightarrow t}_i\cup\Theta^*$.  By Lemma \ref{thm:f_to_t}, we obtain a deduction $d^*$ of $\Theta_i\cup\Sigma$ from sequents where $e$ is active.  But if $e$ is active at some $\Upsilon$ in $d^*$, the $H$-expression must be a formula, and all formulas are fixed in $\Upsilon$, so $\Upsilon$ is an $AxH$ axiom.  So $d^*$ is an $\Omega^{*}$-derivation.

Combining $d^*$, the deduction of $\Sigma$ from $\Theta^{\mathrm{term}\rightarrow t}_i\cup\Sigma$, and an $H_{e,v}$ rule gives an $\Omega^{*}$-derivation of $\Theta^{\mathrm{term}\rightarrow t}$.
\end{proof}

\subsection{Well-Foundedness}\label{sec:wellf}

Before proving the main theorem, we need to establish the induction on derivations which we will need.  We will consider triples $(d,R,\{d_{p,u}\}_{p\in R,u\in\mathbb{N}})$ where $R$ is (for the moment) an arbitrary index set.  For notational convenience, we assume $\top$ is a fixed element not in $R$, and we will take $d_{\top,u}=d$ for all $u$.
\begin{definition}
  We say $(d',R',\{d'_{p,u}\}_{u\in\mathbb{N},p\in R'}) \prec^* (d,R,\{d_{p,u}\}_{u\in\mathbb{N},p\in R})$ if there is a function $f:R'\cup\{\top\}\rightarrow R\cup\{\top\}$ such that:
  \begin{itemize}
  \item each $d_{p',u}$ is a subderivation of $d_{f(p'),u}$,
  \item if $f^{-1}(f(p'))$ is not a singleton or $f(p')=\top$ then each $d_{p',u}$ is a proper subderivation of $d_{f(p'),u}$.
  \end{itemize}
\end{definition}

\begin{lemma}
  $\prec^*$ is transitive.
\end{lemma}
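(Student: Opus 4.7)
The natural candidate witness is the composition: given $g : R''\cup\{\top\}\to R'\cup\{\top\}$ witnessing $(d'',R'',\{d''_{p'',u}\})\prec^*(d',R',\{d'_{p',u}\})$ and $f : R'\cup\{\top\}\to R\cup\{\top\}$ witnessing $(d',R',\{d'_{p',u}\})\prec^*(d,R,\{d_{p,u}\})$, I would take $h = f\circ g$ as the witness for $(d'',R'',\{d''_{p'',u}\})\prec^*(d,R,\{d_{p,u}\})$. Throughout I use the convention $f(\top)=\top$ and $g(\top)=\top$, which is implicit from the convention $d_{\top,u}=d$ together with the subderivation clause applied at $p'=\top$.

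The subderivation clause composes immediately: for each $p''$ and $u$, the hypothesis on $g$ makes $d''_{p'',u}$ a subderivation of $d'_{g(p''),u}$, and the hypothesis on $f$ makes $d'_{g(p''),u}$ a subderivation of $d_{h(p''),u}$, so transitivity of ``subderivation of'' closes this step.

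For the proper subderivation clause, fix $p''$ with either $h^{-1}(h(p''))$ non-singleton or $h(p'')=\top$, and split on which of $g$, $f$ is responsible. If $g^{-1}(g(p''))$ is already non-singleton or $g(p'')=\top$, then the $g$-hypothesis gives $d''_{p'',u}$ as a \emph{proper} subderivation of $d'_{g(p''),u}$, and composing with the (possibly non-strict) containment $d'_{g(p''),u}\subseteq d_{h(p''),u}$ from $f$ yields properness. Otherwise $g$ is injective around $p''$ and does not send it to $\top$, so the witness must come from $f$ at $g(p'')$: either some $g(q'')\ne g(p'')$ satisfies $f(g(q''))=f(g(p''))$, or $f(g(p''))=\top$ with $g(p'')\ne\top$ (so $f^{-1}(\top)\supseteq\{\top,g(p'')\}$ is non-singleton). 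In this subcase the $f$-hypothesis gives $d'_{g(p''),u}$ as a proper subderivation of $d_{h(p''),u}$, and the $g$-hypothesis provides the outer subderivation $d''_{p'',u}\subseteq d'_{g(p''),u}$.

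I do not expect any real obstacle; this is the standard check that the composition of a strict and a non-strict ordering remains strict. The only place to be careful is the $\top$ book-keeping, and this is already smoothed over by the way the definition bundles ``$h(p'')=\top$'' with the non-singleton preimage clause---the case $h(p'')=\top$ with $g(p'')\neq \top$ is automatically handled by observing that $f^{-1}(\top)$ must then contain both $\top$ and $g(p'')$.
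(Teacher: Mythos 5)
Your proof is correct and is essentially the paper's own argument: compose the two witnessing functions, get the (non-strict) subderivation clause by transitivity, and for the properness clause split on whether the collision (or hitting $\top$) already occurs at the inner witness $g$ or only at the outer witness $f$. One quibble: nothing in the definition actually forces $f(\top)=\top$ or $g(\top)=\top$ (the well-foundedness argument later in the paper explicitly allows $f(\top)\in R$), but you never really need this --- in the case $h(p'')=\top$ with $g(p'')\neq\top$ the properness of $d'_{g(p''),u}$ in $d_{\top,u}$ follows directly from the ``or $f(p')=\top$'' disjunct of the definition, with no need to argue that $f^{-1}(\top)$ is non-singleton.
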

\begin{proof}
  If $(d'',R'',\{d''_{p,u}\}_{u\in\mathbb{N},p\in R''})  \prec^*(d',R',\{d'_{p,u}\}_{u\in\mathbb{N},p\in R'}) \prec^* (d,R,\{d_{p,u}\}_{u\in\mathbb{N},p\in R})$, there are functions $f':R''\cup\{\top\}\rightarrow R'\cup\{\top\}$ and $f:R'\cup\{\top\}\rightarrow R\cup\{\top\}$ witnessing this.  We claim that $f\circ f'$ witnesses that $(d'',R'',\{d''_{p,u}\}_{u\in\mathbb{N},p\in R''}) \prec^* (d,R,\{d_{p,u}\}_{u\in\mathbb{N},p\in R})$: for any $p''$, we have $d_{p'',u}$ is a subderivation of $d_{f'(p''),u}$, which in turn is a subderivation of $d_{f(f'(p'')),u}$ as needed.  To check the second clause, certainly if $f(f'(p''))=\top$ then $d'_{f'(p''),u}$ is a proper subderivation of $d_{\top,u}$, soit suffices to show that if $f(f'(p''_0))=f(f'(p''_1))$ then $d_{p''_0,u}$ is a proper subderivation of $d_{f(f'(p''_0)),u}$.  If $f'(p''_0)=f'(p''_1)$ then $d_{p''_0,u}$ is a proper subderivation of $d_{f'(p''_0),u}$, and so a proper subderivation of $d_{f(f'(p''_0)),u}$.  Otherwise $f'(p''_0)\neq f'(p''_1)$ but $f(f'(p''_0))=f(f'(p''_1))$, so $d_{f'(p''_0),u}$ is a proper subderivation of $d_{f(f'(p''_0)),u}$.
\end{proof}

\begin{lemma}
  $\prec^*$ is well-founded.
\end{lemma}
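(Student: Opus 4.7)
The plan is to derive a contradiction from any hypothetical infinite $\prec^*$-descending chain by exhibiting a strictly decreasing chain in the Dershowitz--Manna multiset ordering on finite multisets of ordinals, which is well-founded. Since each derivation is by definition a well-founded tree, it carries an ordinal rank $\|d\|$ with the property that $\|d'\| < \|d\|$ whenever $d'$ is a proper subderivation of $d$.

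Assume for contradiction that $X_n = (d_n, R_n, \{d_{n,p,u}\})$ is an infinite $\prec^*$-descending chain (with each $R_n$ finite, as in the intended applications), witnessed by $f_n \colon R_{n+1} \cup \{\top\} \to R_n \cup \{\top\}$. Fix an arbitrary $u \in \mathbb{N}$, write $\bar d_{n,p} = d_{n,p,u}$ (so $\bar d_{n,\top} = d_n$), and form the finite multiset $M_n = \{\|\bar d_{n,p}\| : p \in R_n \cup \{\top\}\}$. The central claim is that $M_{n+1} <_{\mathrm{mul}} M_n$ for every $n$; an infinite such descent then contradicts well-foundedness of the multiset ordering on ordinals.

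To construct the Dershowitz--Manna witness for a single step, I would partition $R_n \cup \{\top\} = I_s \sqcup I_* \sqcup K$, where $K$ is the complement of $\mathrm{image}(f_n)$, and within the image I would pick out the ``non-top singleton fibers'' $I_s = \{p \in \mathrm{image}(f_n) : |f_n^{-1}(p)| = 1 \text{ and } p \neq \top\}$, leaving $I_*$ as the rest. The definition of $\prec^*$ then yields strict inequality $\|\bar d_{n+1,p'}\| < \|\bar d_{n,f_n(p')}\|$ precisely when $f_n(p') \in I_*$, and only $\leq$ for $p' = \sigma(p)$ with $p \in I_s$ (where $\sigma(p)$ denotes the unique preimage). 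Separating $I_s$ further into the matched-equal set $E = \{p \in I_s : \|\bar d_{n+1,\sigma(p)}\| = \|\bar d_{n,p}\|\}$ and its complement, I would take the removed sub-multiset to be $M_0 = \{\|\bar d_{n,p}\| : p \in (R_n \cup \{\top\}) \setminus E\}$ and set $M_1 = M_{n+1} \setminus (M_n \setminus M_0)$.

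The main obstacle, and the only point requiring a small argument, is verifying $M_0 \neq \emptyset$: if $M_0$ were empty then $R_n \cup \{\top\} \subseteq E \subseteq I_s$, but $I_s$ excludes $\top$ by construction, contradicting $\top \in R_n \cup \{\top\}$. Granted this, checking that $M_n \setminus M_0$ is a sub-multiset of $M_{n+1}$ (since each $p \in E$ contributes one matching copy of $\|\bar d_{n,p}\|$ to $M_{n+1}$ via $\sigma(p)$), and that every element of $M_1$ is strictly less than some $\|\bar d_{n,f_n(p')}\| \in M_0$ with $f_n(p') \in (I_s \setminus E) \cup I_*$, is routine from the above classification, giving $M_{n+1} <_{\mathrm{mul}} M_n$ as required.
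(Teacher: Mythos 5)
Your argument is correct and takes a genuinely different route from the paper's. The paper works directly on the hypothetical infinite descending chain: using the composed witnessing maps $f_{i,j}$, it applies a pigeonhole argument to the values $f_{i,0}(\top)$ to find a $p_0$ that eventually ``shrinks,'' then iterates to extract an infinite, properly decreasing sequence of subderivations of a single derivation, contradicting its well-foundedness. You instead package each triple as a finite multiset of ordinal ranks and verify one local fact — that each $\prec^*$-step is a Dershowitz--Manna decrease — with $\|d_n\|$ (the $\top$-component) always available in the removed part $M_0$ because $\top\notin I_s$; well-foundedness then comes for free from the multiset ordering. Your reduction is more modular and, to my eye, easier to check: the only case split is the singleton-fiber/$\top$ dichotomy already built into the definition of $\prec^*$. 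Two remarks. First, both proofs need the index sets to be finite — yours explicitly, the paper's implicitly in the step ``choose $p_0$ such that $f_{i,0}(\top)=p_0$ for infinitely many $i$.'' This is not a defect of your proof: the relation as literally defined on ``arbitrary index sets'' is not well-founded (take $R_n=\mathbb{N}$, $f_n(\top)=0$, $f_n(k)=k+1$, and let each component of the smaller triple equal the corresponding subderivation; every fiber is a singleton and $\top$ is never a value, so no properness is ever forced), and in the intended application $R$ is always finite, growing one pair at a time from $\emptyset$. Second, your phrase ``strict inequality precisely when $f_n(p')\in I_*$'' should read ``is \emph{forced} precisely when'': strictness can also happen to hold for $p'\in\sigma(I_s)$, which is exactly what your set $E$ is there to sort out, so the argument itself is unaffected.
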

\begin{proof}
  Suppose $(d^0,R^0,\{d^0_{p,u}\}_{u\in\mathbb{N},p\in R^0})\succ^* (d^1,R^1,\{d^1_{p,u}\}_{u\in\mathbb{N},p\in R^1})\succ^*\cdots$.  For $j<i$ we have $f_{i,j}:R^i\cup\{\top\}\rightarrow R^j\cup\{\top\}$ witnessing $(d^j,R^j,\{d^j_{p,u}\}_{u\in\mathbb{N},p\in R^j})\prec^* (d^i,R^i,\{d^i_{p,u}\}_{u\in\mathbb{N},p\in R^i})$ with $f_{i,j}\circ f_{j,k}=f_{i,k}$.  

We say $p'\in R^i\cup\{\top\}$ is a \emph{child} of $p\in R^j\cup\{\top\}$ if $f_{i,j}(p')=p$.  We say that $p\in R^j$ \emph{shrinks} at $i$ if for every child $p'$ of $p$ in $R^i$, each $d^i_{p',u}$ is a proper subderivation of $d^j_{f(p'),u}$.  Clearly if $p$ shrinks at $R^i$, $p$ also shrinks at all $i'\geq i$.

Choose $p_0\in R^0\cup\{\top\}$ such that $f_{i,0}(\top)=p_0$ for infinitely many $i$.  Choose an $i_0$ so that $f_{i_0,0}(\top)=p_0$.  We claim that $p_0$ shrinks at $i_0+1$: if $p_0$ does not already shrink at $i_0$ then $\top$ is the only child of $p_0$ in $i_0$, so $p_0$ must shrink in all $i'> i_0$.  We may repeat this process: choose a $p_1$ in $R^{i_0+1}\cup\{\top\}$ with $f_{i_0+1,0}(p_1)=p_0$ and such that $f_{i,i_0+1}(\top)=p_1$ for infinitely many $i$.  Then, taking $i_1$ to be least so that $f_{i_1,i_0+1}(\top)=p_1$, $p_1$ must shrink at $i_1+1$.  Iterating this process, we obtain an infinite sequence of properly decreasing derivations $d^0_{p_0,0}, d^{i_0+1}_{p_1,0},\ldots$, contradicting the well-foundedness of $d^0_{p_0,0}$.
\end{proof}

\subsection{Eliminating Formulas}

\begin{lemma}\label{thm:imp2_compat}
  Suppose $d$ is an $\Omega$-derivation of $\Theta$.  Let $\Sigma$ be a correct sequent such that:
  \begin{itemize}
  \item $\Sigma\leq\Omega$,
  \item $\Theta$ and $\Sigma$ are compatible,
  \item $(\Sigma f)_{\geq\Omega}\subseteq\Theta$,
  \item $\Sigma^-_{=\Omega}=\emptyset$,
  \item $\Sigma_P$ is an initial segment of $\Theta_P$.
  \item for each $m\in I$ in $\Sigma_P$, $\Theta_V(m\in I)=\Sigma_V(m\in I)$.
  \end{itemize}
Then there is an $\Omega$-derivation $d\cup\Sigma$ of $\Theta\cup\Sigma$.
\end{lemma}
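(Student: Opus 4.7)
The plan is induction on $d$, mirroring the structure of Lemmas \ref{thm:pred_compat} and \ref{thm:imp_compat}, with a case analysis on the final inference. Since $d$ is an $\Omega$-derivation, this final inference is an axiom, a $Cut_e$ with $rk(e)<\Omega$, an $Fr_e$ with $rk(e)\geq\Omega$, or an $H_{e,v}$ with $rk(e)\geq\Omega$. Axioms persist under union with $\Sigma$. For $Cut_e$ with $rk(e)<\Omega$ I would split on whether $e\in\dom(\Sigma)$: if so, discard all branches except the one matching $\Sigma_S(e)$; if not, invoke the IH on each branch and reattach the cut. Compatibility and the remaining hypotheses on $\Sigma$ transfer to each premise because $Cut_e$ touches only rank $<\Omega$ data and leaves the $P,V$ components untouched.

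For $Fr_e$ and $H_{e,v}$ with $rk(e)>\Omega$, the bound $\Sigma\leq\Omega$ forces $e\notin\dom(\Sigma)$, and the $H$-step (which only affects expressions of rank $>rk(e)$) leaves $\Sigma$ intact; so the IH on the premise followed by reattachment of the same rule suffices. The $Fr_e$ case at rank $=\Omega$ splits into two subcases. If $e=c_n$, the hypothesis $\Sigma^-_{=\Omega}=\emptyset$ gives $c_n\notin\dom(\Sigma)$ and the argument proceeds as before. If $e=n\in I$, then $e\notin\dom(\Theta)$, so $n\in I\notin\Theta_P$, and by the initial-segment hypothesis $n\in I\notin\Sigma_P$; thus $(n\in I,\top)\notin\Sigma$, only $(n\in I,?)$ can appear, and in that case $(\Sigma f)_{\geq\Omega}\subseteq\Theta$ combined with $e\notin\dom(\Theta)$ forces $\Sigma_F(e)=t$, so the IH applies to the premise and the $Fr_e$ is reattached or dropped as appropriate.

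The essential case is $H_{e,v}$ with $rk(e)=\Omega$. When $e=n\in I$ is a positive formula, the premise $H((n\in I,?,t),\Theta)$ adds $(n\in I,\top)$, appends $\langle n\in I\rangle$ to $\Theta_P$, records $V(n\in I)=\Theta^-_{=\Omega}$, and deletes all negative rank $\Omega$ entries. Since $\Sigma^-_{=\Omega}=\emptyset$ and $\Sigma_P$ is an initial segment of the extended history, I set $\Sigma'=\Sigma\setminus\{(n\in I,?)\}$ when that pair is present, verify the IH hypotheses for the premise together with $\Sigma'$, apply the IH, then cap with $H_{e,v}$. When $e=c_n$ is a negative term, $\Sigma^-_{=\Omega}=\emptyset$ again gives $c_n\notin\dom(\Sigma)$; the subcase $(n\in I,\top)\notin\Theta$ is a straightforward addition, while the subcase $(n\in I,\top)\in\Theta$ rewinds $\Theta_P$ to the initial segment $P'$ omitting $n\in I$. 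The hard part will be this last subcase: the initial-segment condition $\Sigma_P\sqsubseteq\Theta_P$ and the matching of $V$-values are preserved under the rewind precisely when $n\in I\notin\Sigma_P$, so I must either argue that the intended use of the lemma avoids the case $n\in I\in\Sigma_P$, or truncate $\Sigma$ to $\Sigma'$ whose history ends before $n\in I$ and use Lemma \ref{thm:low_rank_correct_Omega}, together with the monotonicity of $I$ in the relevant $F$-formulas, to show that $\Sigma'$ remains correct so that the IH applies and $H_{e,v}$ delivers the desired derivation.
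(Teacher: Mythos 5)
Your proposal is correct and follows essentially the same route as the paper: induction on $d$, with the sub-$\Omega$ and super-$\Omega$ inferences handled as in Lemma \ref{thm:pred_compat}, the rank-$\Omega$ $Fr_e$ cases dispatched via $\Sigma^-_{=\Omega}=\emptyset$ and $\Sigma_P\sqsubseteq\Theta_P$, and the rank-$\Omega$ $H$-step handled by shrinking $\Sigma$ to the part surviving the step. The ``truncate $\Sigma$'' option you flag for the negative-term rewind subcase is exactly what the paper does (it sets $\Sigma'=\Sigma\cap\Upsilon$ for $\Upsilon$ the premise of the $H$-inference on $\Theta\cup\Sigma$), so no alternative argument avoiding $n\in I\in\Sigma_P$ is needed.
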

\begin{proof}
  By induction on $d$.  Consider the final inference of $d$.  A $Cut$ (necessarily of rank $<\Omega$), or an $Fr$ or $H$ of rank $>\Omega$ is handled as in Lemma \ref{thm:pred_compat}.

  At an $Fr_e$ of rank $\Omega$, if $e$ is a formula, we cannot have $(e,?)\in\Sigma$ since $\Sigma^-_{=\Omega}=\emptyset$, and we cannot have $(e,\top)\in\Sigma$ since $\Sigma_P\sqsubseteq\Theta_P$; by the inductive hypothesis we have an $\Omega$-derivation of $(e,?,t),\Theta\cup\Sigma$, and an application of the same inference gives a deduction of $\Theta\cup\Sigma$.

If $e$ is a term, if $e\not\in\dom(\Sigma)$ then again by the inductive hypothesis we have an $\Omega$-derivation of $(e,?,t),\Theta\cup\Sigma$ and an application of the same inference gives an $\Omega$-derivation of $\Theta\cup\Sigma$.  If $e\in\dom(\Sigma)$ then, since $\Sigma^-_{=\Omega}=\emptyset$, $(e,?)\in\Sigma$, so we may apply the inductive hypothesis to obtain an $\Omega$-derivation of $(e,?,t),\Theta\cup\Sigma=\Theta\cup\Sigma$.

  At an $H_e$ of rank $\Omega$, note that the $H$-rule also applies to $\Theta\cup\Sigma$ (since $(\Sigma f)_{\geq\Omega}\subseteq\Theta$).  We may derive $\Theta\cup\Sigma$ from some sequent $\Upsilon$ using an $H_e$ inference; letting $\Sigma'=\Sigma\cap\Upsilon$, the inductive hypothesis gives us an $\Omega$-derivation of $\Upsilon$.
\end{proof}

\begin{definition}
    If $\Theta$ is a sequent, we write $\Theta^{\mathrm{form}\rightarrow t}$ for the sequent where:
  \begin{itemize}
  \item $\Theta^{\mathrm{form}\rightarrow t}_S=\Theta_S, \Theta^{\mathrm{form}\rightarrow t}_P=\Theta_P, \Theta^{\mathrm{form}\rightarrow t}_V(m\in I)=\Theta_V$, and
  \item $\Theta^{\mathrm{form}\rightarrow t}_F(e)=\left\{\begin{array}{ll}
t&\text{if }rk(e)=\Omega\text{, }e\text{ is a formula, and }\Theta_S(e)=?\\
\Theta_F(e)&\text{otherwise}
\end{array}\right.$
  \end{itemize}
\end{definition}

\begin{lemma}\label{thm:imp2_rep}
  Suppose $(\Theta_0,\ldots,\Theta_n)$ is an $\Omega^{*}$-path with $\Theta=\Theta_n$ and $\Sigma$ is a correct sequence such that:
  \begin{itemize}
  \item whenever $(e,u)\in\Theta_{=\Omega}^{-,\mathrm{term}}$ then $\Theta_F(e)=t$,
  \item whenever $(e,?)\in\Sigma_{=\Omega}^{+,\mathrm{term}}$ then $\Sigma_F(e)=t$,
  \item $\Sigma\leq\Omega$,
  \item $\Theta_{\leq\Omega}^{\mathrm{form}\rightarrow t}\setminus\Theta^{\mathrm{form}\rightarrow t} t\subseteq\Sigma$,
  \item if $(c_n,u)\in\Theta^{-,\mathrm{term}}_{=\Omega}$ then $(n\in I,\top),(c_n,?)\not\in\Sigma$,
  \item $\Theta_P$ is an initial segment of $\Sigma_P$,
  \item for each $m\in I$ in $\Theta_P$, $\Theta_V(m\in I)=\Sigma_V(m\in I)$.
  \end{itemize}

Then $\Theta^{\mathrm{form}\rightarrow t}$ and $\Sigma$ are compatible and there is a deduction of $\Sigma$ from $\Theta^{\mathrm{form}\rightarrow t}\cup\Sigma$ consisting only of $Fr$ and $H$ inferences of rank $\geq\Omega$.
\end{lemma}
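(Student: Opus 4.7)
The proof is by induction on $n$. The base case $n=0$ is immediate: $\Theta=\emptyset$, so $\Theta^{\mathrm{form}\rightarrow t}\cup\Sigma=\Sigma$ and the trivial one-node deduction of $\Sigma$ from $\Sigma$ suffices.

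For the inductive step, set $\Theta'=\Theta_{n-1}$, verify that the seven hypotheses continue to hold with $\Theta'$ in place of $\Theta$, and apply the inductive hypothesis to obtain a derivation $d'$ of $\Sigma$ from $\Theta'^{\mathrm{form}\rightarrow t}\cup\Sigma$ consisting only of $Fr$ and $H$ inferences of rank $\geq\Omega$, together with the compatibility of $\Theta'^{\mathrm{form}\rightarrow t}$ and $\Sigma$. I then case on the inference of the $\Omega^{*}$-deduction joining $\Theta'$ (conclusion) and $\Theta$ (premise), which must be one of $Cut_e$ with $rk(e)<\Omega$, $Cut^{\Omega,\mathrm{form}}_e$, $CutFr^{\Omega,\mathrm{term}}_e$, $Fr_e$ with $rk(e)>\Omega$, or $H_e$ with $rk(e)\geq\Omega$.

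The routine cases mirror Lemmas \ref{thm:pred_rep} and \ref{thm:imp1_rep}. When the added content of $\Theta$ is not $t$-fixed at rank $\leq\Omega$, the fourth hypothesis forces it already into $\Sigma$, so $\Theta^{\mathrm{form}\rightarrow t}\cup\Sigma=\Theta'^{\mathrm{form}\rightarrow t}\cup\Sigma$ and $d'$ works directly; this covers $Cut_e$ with $rk(e)<\Omega$, the $\top$-branch of $Cut^{\Omega,\mathrm{form}}_e$ (whose added history is visible in $\Sigma$ by hypotheses four, six, and seven), and the value-branches of $CutFr^{\Omega,\mathrm{term}}_e$. For $Fr_e$, $H_e$ of rank $>\Omega$, or $H_e$ of rank $=\Omega$, I prepend the corresponding rule to $d'$ after checking, using the $\Theta_P\sqsubseteq\Sigma_P$ and $\Sigma_V\supseteq\Theta_V$ conditions, that the rule is still applicable and produces the right sequent.

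The main obstacle will be the $?$-branches of $Cut^{\Omega,\mathrm{form}}_e$ and $CutFr^{\Omega,\mathrm{term}}_e$, where the added $(e,?)$ becomes $t$-fixed in $\Theta^{\mathrm{form}\rightarrow t}$ and hence is not forced into $\Sigma$ by the fourth hypothesis. If $(e,?,t)\in\Sigma$, the unions coincide and $d'$ works; if $e\notin\dom(\Sigma)$, I prepend $Fr_e$, which is permitted as a rank-$\Omega$ rule. What must be ruled out is the incompatibility case where $\Sigma$ contains $e$ with a concrete value: for a negative term $e=c_n$ this follows from the fifth hypothesis applied to the $?$-valued term $(c_n,?)\in\Theta^{-,\mathrm{term}}_{=\Omega}$, and for a positive formula $e=n\in I$ this requires combining the precondition $n\in I\notin\dom(\Theta')$ on the $Cut^{\Omega,\mathrm{form}}$ inference with the initial-segment condition $\Theta_P\sqsubseteq\Sigma_P$ and the admissibility of the joint history to exclude $(n\in I,\top)\in\Sigma$. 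Once compatibility is verified in every subcase, both conclusions of the lemma follow simultaneously.
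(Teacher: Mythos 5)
Your overall strategy---induction on the length of the path with a case split on the last inference---is the same as the paper's, and your handling of $Cut_e$ with $rk(e)<\Omega$, $Fr_e$, and the branches of $Cut^{\Omega,\mathrm{form}}_e$ matches it. But there is a genuine gap at exactly the point that distinguishes this lemma from Lemmas \ref{thm:pred_rep} and \ref{thm:imp1_rep}: the $H_{e,v}$ inference of rank $\Omega$. You claim that after checking applicability the prepended rule ``produces the right sequent,'' i.e.\ that $H((\Theta')^{\mathrm{form}\rightarrow t}\cup\Sigma)=\Theta^{\mathrm{form}\rightarrow t}\cup\Sigma$. This is false in general. In an $\Omega^*$-path the rank-$\Omega$ $H$-expression is a negative term $c_n$, and by Definition \ref{def:H_Omega_neg} the $H$-step truncates the history to $P'$ and \emph{discards} every $(c_m,?)$ whose justifying $m\in I$ does not survive into $P'$. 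Since $\Sigma$ typically contains rank-$\Omega$ pairs $(c_m,?)$ not present in $\Theta'$, the $H$-step applied to the union drops them, and one obtains only a sequent $\Upsilon\subseteq\Theta^{\mathrm{form}\rightarrow t}\cup\Sigma$ with possibly $(c_m,?)\in\Sigma_S\setminus\Upsilon$. The paper bridges this by inserting an $Fr_{c_m}$ inference of rank $\Omega$ for each such $c_m$---which is precisely why the conclusion of this lemma allows $Fr$ inferences of rank $\geq\Omega$ rather than $>\Omega$ as in Lemma \ref{thm:pred_rep}. Note also that the fifth hypothesis is doing its real work here, not where you put it: $(c_n,u)\in\Theta^{-,\mathrm{term}}_{=\Omega}$ forces $(n\in I,\top),(c_n,?)\not\in\Sigma$, which is what guarantees that the $H$-step on the union computes the same truncation $P'$ as on $\Theta'$ and that the newly written value $(c_n,v)$ does not clash with $\Sigma$. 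Your sketch never brings this hypothesis to bear on the $H$ case, so the ``check'' you defer to cannot actually be discharged as written.

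Two smaller points. The value-branches of $CutFr^{\Omega,\mathrm{term}}_e$ do not need the argument you give: they are excluded outright by the first hypothesis, since a side branch carries $(e,u,f)$ with $e$ a rank-$\Omega$ term marked $f$, whereas the hypothesis demands that every rank-$\Omega$ term of $\Theta$ be marked $t$ (the paper records this as ``we cannot be a side branch''). And in the $?$-branch of $CutFr^{\Omega,\mathrm{term}}_e$ your appeal to the fifth hypothesis is backwards: that hypothesis excludes $(c_n,?)\in\Sigma$, not the presence of $c_n$ in $\Sigma$ with a concrete value, so it cannot be what rules out the incompatibility case you describe.
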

\begin{proof}
  By induction on $n$.  This is trivial when $n=0$.  Suppose $n>0$; let $\Theta'=\Theta_{n-1}$.  The inference rule between $\Theta_n$ and $\Theta_{n-1}$ is one which can occur in an $\Omega^{*}$-deduction, so $\Theta'_{\leq\Omega}\setminus \Theta' t\subseteq\Theta_{\leq\Omega}\setminus \Theta t$ and if $(c_n,u)\in\Theta'$ with $u\neq {?}$ then $(c_n,u)\in\Theta$, so the inductive hypothesis applies to $\Theta'$.

Then there is a deduction of $\Sigma$ from $(\Theta')^{\mathrm{form}\rightarrow t}\cup\Sigma$.  Consider the inference between $\Theta$ and $\Theta'$:
  \begin{itemize}
  \item $Cut_e$, $Fr_e$: as in Lemma \ref{thm:pred_rep},
  \item $Cut^{\Omega,\mathrm{form}}_e$: if $(e,\top)\in\Theta_S$ then $(e,\top)\in\Sigma_S$, so $(\Theta')^{\mathrm{form}\rightarrow t}\cup\Sigma=\Theta^{\mathrm{form}\rightarrow t}\cup\Sigma$ and we are done.  If $(e,?)\in\Theta_S$ then either $(e,?)\in\Sigma$, in which case again $(\Theta')^{\mathrm{form}\rightarrow t}\cup\Sigma=\Theta^{\mathrm{form}\rightarrow t}\cup\Sigma$, or $e\not\in\dom(\Sigma)$, so $(\Theta')^{\mathrm{form}\rightarrow t}\cup\Sigma$ may be derived from $\Theta^{\mathrm{form}\rightarrow t}\cup\Sigma$ by an $Fr$ inference.
  \item $H_{e,v}$: if this is an $H$-rule of rank $\Omega$ with $H$-expression $c_n$ then we have $(c_n,u)\in\Theta^{-,\mathrm{term}}_{=\Omega}$, so $(n\in I,\top),(c_n,?)\not\in\Sigma$; therefore as in Lemma \ref{thm:pred_rep} there is a derivation of $(\Theta')^{\mathrm{form}\rightarrow t}\cup\Sigma$ from a sequent $\Upsilon$ by an $H_{e,v}$ inference.  We have $\Upsilon\subseteq \Theta^{\mathrm{form}\rightarrow t}\cup\Sigma$ but there may be $(c_m,?)\in\Sigma_S\setminus\Upsilon$.  By applying an $Fr_{c_m}$ for each such $c_m$, we obtain a derivation of $\Upsilon$ from $\Theta^{\mathrm{form}\rightarrow t}\cup\Sigma$.
  \item $CutFr^{\Omega,\mathrm{term}}_e$: we cannot be a side branch, so we apply an $Fr_e$ inference if $(e,?)\not\in\Sigma$ and do nothing if $(e,?)\in\Sigma$.
  \end{itemize}
\end{proof}

\begin{definition}


  We say $\Gamma$ is \emph{permitted} for $\Theta$ if:
  \begin{itemize}
  \item $(\Gamma f)^{-,\mathrm{term}}_{=\Omega}=\emptyset$,
  \item there is an $\Omega^*$-path for $\Gamma$,
  \item $\Gamma_P\sqsubseteq\Theta_P$,
  \item $\Gamma_{\leq\Omega}\setminus\Gamma t\subseteq\Theta$, and
  \item $\Gamma^{-,\mathrm{term}}_{=\Omega}\subseteq\Theta$.
  \end{itemize}
\end{definition}



\begin{theorem}\label{thm:impred_reduction}
Suppose $d$ is an $\Omega^{*}$-derivation of $\Theta$ and that there is an $\Omega^{*}$-path for $\Theta$.  Suppose $(\Theta f)^{-,\mathrm{term}}_{=\Omega}=\emptyset$. 

Let $\Lambda$ be a sequent with $\Lambda\leq\Omega$, $\Lambda t\geq \Omega$, $\Lambda$ is compatible with $\Theta^{\mathrm{form}\rightarrow t}$, $\Theta_P\sqsubseteq\Lambda_P$, if $n\in I$ is in $\dom(\Theta_P)$ then $\Theta_V(n\in I)=\Lambda_V(n\in I)$, $(\Lambda f)^{-,\mathrm{term}}_{=\Omega}=(\Lambda f)^{+,\mathrm{term}}_{=\Omega}=\emptyset$, and if $(c_n,?)\in\Lambda$ then $(n\in I,\top,f)\in\Lambda$. 

Suppose there is a set $R$ of pairs $(c_n,\Gamma)$ and, for each $p=(c_n,\Gamma)\in R$, a collection of $\Omega^{*}$-derivations $\{d_{p,u}\}_{u\in\mathbb{N}}$ such that:
\begin{itemize}
\item each $d_{p,u}$ is an $\Omega^{*}$-derivation of $(e,u,f),\Gamma$,
\item if $(c_n,?)\in\Theta^{\mathrm{form}\rightarrow t}$ and $(n\in I,\top)\not\in\Theta^{\mathrm{form}\rightarrow t}\cup\Lambda$ then there is a pair $(c_n,\Gamma)\in R$ with $\Gamma$ permitted for $\Theta$,
\item if $(e,\Delta)\in R$ and $(c_n,?)\in\Delta$ then there is a pair $(c_n,\Gamma)$ in $R$ with $\Gamma$ permitted for $\Delta$.
\end{itemize}

Finally, suppose that for each formula $e$ with $(e,?)\in\Theta$, there is a sequent $\Gamma_e$ and a collection $\{d_{e,P,V}\}$ such that:
\begin{itemize}
\item each $d_{e,P,V}$ is an $\Omega$-derivation of $(e,\top,f,P,V),\Gamma^{\mathrm{form}\rightarrow t}_e$,
\item there is an $\Omega^{*}$-path for $\Gamma_e$,
\item $(\Gamma_e)_{\leq\Omega}\setminus\Gamma_et\subseteq\Theta$,
\item $(\Gamma_ef)^{-,\mathrm{term}}_{=\Omega}=\emptyset$,
\item $(\Gamma_e)^{-,\mathrm{term}}_{=\Omega}\subseteq\Theta$,
\item $(\Gamma_e)_P$ is an initial segment of $\Theta_P$,
\item whenever $(c_n,u)\in\Gamma_e$ with $u\neq{?}$, $(n\in I,\top),(c_n,?)\not\in\Theta$.
\end{itemize}

Then there is an $\Omega$-derivation of $\Theta^{\mathrm{form}\rightarrow t}\cup\Lambda$.
\end{theorem}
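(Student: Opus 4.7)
The plan is to prove the theorem by well-founded induction on the triple $(d, R, \{d_{p,u}\})$ with respect to the ordering $\prec^*$ from Section~\ref{sec:wellf}, splitting on the final inference of $d$. At each step the parameters $\Lambda$, $R$, and $\{d_{e,P,V}\}$ must be updated so that the premises of $d$ satisfy the same collection of hypotheses; the well-foundedness result ensures that this potentially complex recursion (which calls itself both on subderivations of $d$ and on derivations from $R$) terminates.

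In the routine cases---$Cut_e$ with $rk(e)<\Omega$, $Cut^{\Omega,\mathrm{form}}_e$, and $Fr_e$ or $H_e$ with $rk(e)>\Omega$, and any axiom other than a rank-$\Omega$ $AxH$---we apply the inductive hypothesis to each premise (a proper subderivation of $d$, hence $\prec^*$-smaller) with $R$ and $\{d_{e,P,V}\}$ restricted to the new $\Theta$, and recombine with the same rule; the conditions on $\Lambda$, the permitted-ness of entries in $R$, and the formula derivations transfer directly. For a $CutFr^{\Omega,\mathrm{term}}_e$ inference with main expression $e = c_n$: if $c_n \in \dom(\Lambda)$ we select the branch matching $\Lambda_S(c_n)$ and recurse; otherwise we enlarge $R$ to $R' = R \cup \{(c_n, \Theta)\}$, using the side branches $d_u$ of the $CutFr$ as the associated derivations (verifying that $\Theta$ is permitted for itself and that $R'$ still satisfies the closure clause), and apply the inductive hypothesis to $d_?$.

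The hard case is a rank-$\Omega$ $AxH_{e,v}$ axiom. Since $(\Theta f)^{-,\mathrm{term}}_{=\Omega}=\emptyset$, the active fixed expression certifying the $AxH$ must be either a positive rank-$\Omega$ formula or a positive rank-$\Omega$ term. If $e = n \in I$, we replace the $AxH$ by an $H_{e,v}$ inference and must derive its premise. Setting $P_e = (\Lambda_P \setminus \Theta_P) \,{}^\frown \langle n\in I\rangle$ (with $V_e$ inherited from $\Lambda_V$ together with an appropriate choice for the new entry), the hypothesis provides an $\Omega$-derivation $d_{e,P_e,V_e}$ of $(e,\top,f,P_e,V_e),\Gamma_e^{\mathrm{form}\to t}$; we attach this via Lemma~\ref{thm:imp2_compat} to the accumulated sequent, and bridge from $\Theta^{\mathrm{form}\to t}\cup\Lambda$ up to the $H$-premise via repetition (Lemma~\ref{thm:imp2_rep}). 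If $e = c_n$, then necessarily $(n\in I,\top) \in \Theta$ (the other subcase having already been eliminated by Lemma~\ref{thm:omega_star_red}); the closure hypothesis on $R$ supplies an entry $(c_n,\Gamma)$ with derivations $\{d_{(c_n,\Gamma),u}\}$, and we apply the inductive hypothesis to these, which is $\prec^*$-smaller because the map $f$ sends $\top \mapsto (c_n,\Gamma)$ and the $R$-component strictly shrinks.

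The main obstacle is orchestrating the three sources of induction data so every recursive call produces a strictly smaller triple in $\prec^*$, while simultaneously preserving all the compatibility, correctness, and permitted-ness hypotheses. Eliminating an $AxH_{n\in I,\top}$ forces $(n\in I,\top,t)$ into the derivation above the newly-placed $H$-inference, which invalidates exactly those $AxH_{c_n,v}$ axioms that become active after the formula is flipped; these must in turn be eliminated using the derivations from $R$ rather than by recursion on $d$, matching the effective-rank ordering described in the outline. Verifying that at each step the updated $R$ is still permitted for the new $\Theta$, that $\Lambda$ accumulates consistently down the derivation (in particular that $\Theta^{\mathrm{form}\to t}\cup\Lambda$ remains a well-defined sequent, with agreeing $P$ and $V$ components), and that the hypothesis $(\Lambda f)^{-,\mathrm{term}}_{=\Omega}=\emptyset$ is maintained, accounts for the bulk of the bookkeeping.
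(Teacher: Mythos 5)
Your overall architecture matches the paper's proof (induction on $\prec^*$, case split on the last inference, repetition via Lemma \ref{thm:imp2_rep}, union via Lemma \ref{thm:imp2_compat}, flipping via Lemma \ref{thm:f_to_t}), but there are two concrete gaps. The first is that $Cut^{\Omega,\mathrm{form}}_e$ is not a routine case and cannot be ``recombined with the same rule'': an $\Omega$-derivation requires $Cut(d)<\Omega$, so a rank-$\Omega$ formula cut is exactly what the theorem must eliminate. The actual mechanism is that the side branches $d_{P,V}$ of the $Cut^{\Omega,\mathrm{form}}_e$ are first converted (by the inductive hypothesis with $\Lambda=\emptyset$) into $\Omega$-derivations of $(e,\top,f,P,V),\Theta^{\mathrm{form}\rightarrow t}$, and these become the \emph{new} extra data $\{d_{e,P,V}\}$ carried into the recursive call on the $?$-branch; the conclusion is then recovered with an $Fr$ inference, not a $Cut^{\Omega,\mathrm{form}}$. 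Your $AxH_{n\in I}$ case consumes the extra data $d_{e,P_e,V_e}$, but you only have such data for formulas with $(e,?)$ in the root sequent; you never say where it comes from for formulas introduced by $Cut^{\Omega,\mathrm{form}}$ inferences occurring inside $d$ (``restricted to the new $\Theta$'' goes the wrong way --- the data must be \emph{enlarged} as you pass a formula cut). This is the central step of the whole theorem and it is missing.

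The second gap is your $AxH_{c_n,v}$ subcase with $(n\in I,\top)\in\Theta$. You claim ``the closure hypothesis on $R$ supplies an entry $(c_n,\Gamma)$,'' but the hypothesis only supplies $(c_n,\Gamma)\in R$ when $(n\in I,\top)\not\in\Theta^{\mathrm{form}\rightarrow t}\cup\Lambda$, which is exactly the opposite of the situation you are in; the data you invoke is not available. Fortunately no work is needed here: $\cdot^{\mathrm{form}\rightarrow t}$ only flips formulas whose value is $?$, so $(n\in I,\top,f)$ remains fixed and active in $\Theta^{\mathrm{form}\rightarrow t}\cup\Lambda$, which is therefore still an instance of $AxH_{c_n,v}$, and such axioms are permitted to survive into the $\Omega$-derivation. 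The $R$-derivations are consumed only later, at the sequents $\Upsilon$ where an $AxH_{c_n}$ axiom is invalidated because the elimination of an $AxH_{n\in I}$ axiom below it has flipped $(n\in I,\top)$ from $f$ to $t$ --- you do describe this correctly in your final paragraph, but the misplaced invocation of $R$ at the axiom itself would break the argument if carried out literally.
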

\begin{proof}
We proceed by induction on the ordering $\prec^*$ on $(d,R,\{d_{p,u}\}_{p\in R,u\in\mathbb{N}})$.  We refer to $R,\{d_{p,u}\}_{p\in R,u\in\mathbb{N}}$ as the \emph{side premises}; when we refer to applying the inductive hypothesis to a subderivation $d'$ of $d$, we mean using the inductive hypothesis for $(d',R,\{d_{p,u}\}_{p\in R,u\in\mathbb{N}})$.  The derivations $d_{e,P,V}$ will be called the \emph{extra data} to distinguish them.

We will split into cases based on the final inference of $d$ as usual.  With each step, we have to check that we have suitable side premises; this means both keeping track of which side premises are needed (that is, how big $R$ needs to be) and checking that the needed side premises remain valid.

Consider the final inference of $d$.  If this a $Cut_e$ inference with $e\in\dom(\Lambda)$, we have $rk(e)<\Omega$, so $(e,u,f)\in\Lambda$ for some $u$, and the claim follows by applying the inductive hypothesis to $d_u$.  If $e\not\in\dom(\Lambda)$, we may apply the inductive hypothesis to each $d_u$ and then apply a $Cut_e$ rule.  Note that none of these cases interfere with the side-premises.

If the last inference is an $Fr_e$ inference, so $rk(e)>\Omega$, the inductive hypothesis gives a derivation of $(e,?,t),\Theta^{\mathrm{form}\rightarrow t}\cup\Lambda$, and an $Fr_e$ inference gives a derivation of $\Theta^{\mathrm{form}\rightarrow t}\cup\Lambda$.  
Again, this case does not interfere with the side-premises.

Suppose the last inference is a $CutFr^{\Omega,\mathrm{term}}_e$ inference, so we have an $\Omega^*$-derivation of $(e,?,t),\Theta$ and for each $u\neq{?}$ we have an $\Omega^{*}$-derivation of $(e,u,f),\Theta$.  If either $e\not\in\dom(\Lambda)$ or $(e,?,t)\in\Lambda$, we apply the inductive hypothesis to $(d_?,R\cup\{(e,\Theta)\},\{d_{p,u}\}_{p\in R,u\in\mathbb{N}}\cup\{d_u\})$, since $\Theta$ is clearly permitted for $(e,?,t),\Theta$.  This gives us a deduction of $(e,?,t),\Theta^{\mathrm{form}\rightarrow t}\cup\Lambda$, and we apply either apply an $Fr_e$ inference (if $e\not\in\dom(\Lambda)$) or simply take this deduction (if $(e,?,t)\in\Lambda$) to obtain a deduction of $\Theta^{\mathrm{form}\rightarrow t}\cup\Lambda$.

  If $(e,u,t)\in\Lambda$ then we apply Lemma \ref{thm:f_to_t} to $d_u$ to obtain a deduction of $d'_u$ from sequences where $u$ is active; since all such sequents must have $H$-expression a formula, and all formulas are fixed in $d'_u$, $d'_u$ is a derivation of $(e,u,t),\Theta$.  Then we apply the inductive hypothesis to $d'_u$.  


  Suppose the last inference is a $Cut^{\Omega,\mathrm{form}}_e$, so we have side branches $d_{P,V}$ which are $\Omega^{*}$-derivations of $(e,\top,f,P,V),\Theta$.  We apply the inductive hypothesis with $\Lambda=\emptyset$ to each side branch to obtain $\Omega$-derivations of $(e,\top,f,P,V),\Theta^{\mathrm{form}\rightarrow t}$.  We use these as the additional extra data and apply the inductive hypothesis to the derivation of $(e,?,f),\Theta$, so we obtain a derivation of $(e,?,t),\Theta^{\mathrm{form}\rightarrow t}\cup\Lambda$.  We then apply an $Fr$ inference to obtain a derivation of $\Theta^{\mathrm{form}\rightarrow t}\cup\Lambda$.

Suppose the last inference is an $H_e$ inference.  If $rk(e)>\Omega$, this follows from the inductive hypothesis.  If $rk(e)=\Omega$, we have an $\Omega^{*}$-derivation $d'$ of the premise $\Theta^*$ and $e$ must be a term.  We attempt to apply the inductive hypothesis with an appropriate subset of $\Lambda$ to obtain a derivation of $(\Theta^{\mathrm{form}\rightarrow t}\cup\Lambda)^*$.  If $(c_n,?)\in(\Theta^{\mathrm{form}\rightarrow t}\cup\Lambda)^*$, it must be because $(n\in I,\top)\in\Theta\cup\Lambda$, so we have the needed side-premises.

Suppose $\Theta$ is an instance of an axiom but $\Theta^{\mathrm{form}\rightarrow t}\cup\Lambda$ is not.  Then $\Theta$ must be an instance of $AxH_{e,v}$ with $rk(e)=\Omega$ and $e$ a formula.  We may derive $\Theta^{\mathrm{form}\rightarrow t}\cup\Lambda$ from a sequent $\Sigma=(e,\top,t,P,V),\Theta^*$ by an $H_{e,\top}$-inference rule.  Since $(e,?)\in\Theta$, there is an $\Omega$-derivation $d_{e,P,V}$ of $(e,\top,f,P,V),\Gamma^{\mathrm{form}\rightarrow t}_e$.  We attempt to apply Lemma \ref{thm:imp2_rep}: if $(e',u)\in(\Gamma_e)^{-,\mathrm{term}}_{=\Omega}$ then $(\Gamma_e)_F(e')=t$ by assumption, and if $(e,?)\in\Sigma^{\mathrm{term}}_{=\Omega}$ then $\Sigma_F(e)=t$ by assumption.  $\Sigma\leq\Omega$ since it is the result of an $H$-step of rank $\Omega$.  By assumption, $(\Gamma_e)_{\leq\Omega}\setminus\Gamma_e t\subseteq \Theta$, and since $\Theta^{\mathrm{form}\rightarrow t}_{\leq\Omega}\setminus\Theta^{\mathrm{form}\rightarrow t} t\subseteq\Sigma$, we have $(\Gamma_e)^{\mathrm{form}\rightarrow t}_{\leq\Omega}\setminus\Gamma^{\mathrm{form}\rightarrow t}_e t\subseteq\Sigma$, and $\Gamma_P$ is an initial segment of $\Sigma_P$.  If $(c_n,u)\in\Gamma_e$ with $u\neq{?}$ then $(c_n,u)\in\Theta$, so $(n\in I,\top),(c_n,?)\not\in\Sigma$.  So there is a deduction of $\Sigma$ from $\Gamma_e^{\mathrm{form}\rightarrow t}\cup\Sigma$.

It remains to produce an $\Omega$-derivation of $(\Gamma_e\cup\Sigma)^{\mathrm{form}\rightarrow t}=\Gamma_e^{\mathrm{form}\rightarrow t}\cup\Sigma=(e,\top,t,P,V),\Gamma_e^{\mathrm{form}\rightarrow t}\cup\Theta^*$.  We have $\Theta^*\leq\Omega$, and $\Theta^*$ and $(e,\top,f,P,V),\Gamma_e$ are compatible.  Since $\Theta^*$ is the result of an $H$-step with a formula $H$-expression, $(\Theta^*)^-_{=\Omega}=\emptyset$, $(\Theta^* f)_{\geq\Omega}\subseteq(\Theta^*)^{+,\mathrm{form}}_{=\Omega}\subseteq((e,\top,f,P,V),\Gamma_e)^{+,\mathrm{form}}_{=\Omega}$, and $\Theta^*_P\sqsubseteq ((e,\top,t,P,V),\Gamma_e)_P$.  Therefore by Lemma \ref{thm:imp2_compat}, there is an $\Omega$-derivation of $(e,\top,f,P,V),\Gamma_e\cup\Theta^*$.  By Lemma \ref{thm:f_to_t}, we have a deduction of $(e,\top,t,P,V),\Gamma^{\mathrm{form}\rightarrow t}_e\cup\Theta^*$ from sequents where $e$ is active. 

We now show that we can produce an $\Omega$-derivation of each of these sequents.  Consider some sequent $\Upsilon$ in this deduction so that the $H$-step applies to $\Upsilon$ and $e$ is active.  Then the $H$-expression of $\Upsilon$ must be a term of the form $c_n$ where $n\in I$ appears in $((e,\top,t,P,V),\Gamma_e)_P$---in particular, if $e$ is not $n\in I$ then $n\in I$ appears before $e$.  But since there is an $\Omega^{*}$-path for $\Gamma_e$, any $n\in I$ appearing in $P$ strictly before $e$ is fixed, so if $n\in I$ appears before $e$, $\Upsilon$ is an instance of $AxH$.

So $e$ is $n\in I$ and the $H$-expression of $\Upsilon$ is $c_n$.  We may deduce $\Upsilon$ by an $H$-inference from $\Upsilon'=(c_n,u,t),\Upsilon^*$.  Since the $H$-step applies to $\Theta$, we have $(c_n,?)\in\Theta$, and therefore we have $(c_n,\Gamma_{c_n})\in R$ and a family of deductions $d_{c_n,u}$ of $(c_n,u,f),\Gamma_{c_n}$ so that $\Gamma_{c_n}$ is permitted for $\Theta$.

We apply Lemma \ref{thm:imp2_rep} (with $\Gamma_{c_n}$ as $\Theta$ and $\Upsilon'$ as $\Sigma$).  If $(e',u)\in(\Gamma_{c_n})^{-,\mathrm{term}}_{=\Omega}$ then since $(\Gamma_{c_n}f)^{-,\mathrm{term}}_{=\Omega}=\emptyset$, $(\Gamma_{c_n})_F(e')=t$.  If $(e',?)\in(\Upsilon')_{=\Omega}^{+,\mathrm{term}}$ then $\Upsilon'_F(e')=t$ because of the path for $\Upsilon$.  $\Upsilon'\leq\Omega$ since it is the result of an $H$-step of rank $\Omega$.  If $(e',u)\in(\Gamma_{c_n}^{\mathrm{form}\rightarrow t})_{\leq\Omega}\setminus(\Gamma_{c_n}^{\mathrm{form}\rightarrow t}t)$ then $(e',u)$ is fixed in $\Theta$, and since the inferences between $\Upsilon'$ and $\Theta$ all come from an $\Omega$-derivation, also $(e',u)\in\Upsilon'$.  If $(c_m,u)\in(\Gamma_{c_n})^{-,\mathrm{term}}_{=\Omega}$ then $(c_m,u)\in\Theta$, so $(c_m,u)\in\Upsilon_V(n\in I)$, so $(m\in I,\top),(c_m,?)\not\in\Upsilon'$.  If $(m\in I,\top)\in\Gamma_{c_n}$ then since $(\Gamma_{c_n})_P\sqsubseteq\Theta_P=\Upsilon'_P$, $(m\in I,\top)\in\Upsilon'$.  Therefore $\Gamma_{c_n}^{\mathrm{form}\rightarrow t}$ and $\Upsilon'$ are compatible and there is a deduction of $\Upsilon'$ from $\Gamma_{c_n}^{\mathrm{form}\rightarrow t}\cup\Upsilon'$.

We apply Lemma \ref{thm:f_to_t} to $d_{c_n,u}$ to obtain an $\Omega^*$-deduction $d'_{c_n,u}$ of $(c_n,u,t),\Gamma_{c_n}$ from sequents where $c_n$ is active; since all such sequents must have $H$-expression a formula, and all formulas are fixed in $\Gamma_{c_n}$, $d'_{c_n,u}$ is also an $\Omega^*$-derivation.  Finally, we apply the inductive hypothesis to $d'_{c_n,u}$ with $\Lambda=\Upsilon^*$ and the needed side-premises.  Clearly $((c_n,u,t),\Gamma_{c_n}f)^{-,\mathrm{term}}_{=\Omega}=(\Upsilon^*)^{-,\mathrm{term}}_{=\Omega}=\emptyset$, $\Upsilon^*\leq\Omega$, $\Upsilon^*t\geq\Omega$, and we have shown that $\Upsilon^*$ is compatible with $\Gamma_{c_n}^{\mathrm{form}\rightarrow t}$.  Because $\Upsilon^*$ is the result of an $H$-step with $H$-expression $c_n$, if $(c_m,?)\in\Upsilon^*$ then $(m\in I,\top,f)\in\Upsilon^*$.
\end{proof}

\subsection{Putting it Together}

\begin{theorem}
  Suppose $ID_1\vdash\exists x\phi(x)$ where $\phi$ is quantifier-free and does not contain the symbol $I$.  Then there is an $n$ such that $\phi(n)$ holds.
\end{theorem}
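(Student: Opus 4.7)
The plan is to use the deductive apparatus built up in the paper to show that the $H$-process terminates on the proof of $\exists x\,\phi(x)$, and then read off the numerical witness. By the 1-consistency theorem at the end of Section 4, it suffices to show that for every proof of $\phi$ in $ID_1$ (equivalently, every finite set of closed critical formulas arising from its Skolemization) the $H$-process terminates in a solving substitution $S$; applied to the Skolemized formula $\phi(c_{\exists x\,\phi},\ldots)$, the value $S(c_{\exists x\,\phi})$ is the desired numeral $n$. So fix the list $\mathrm{Cr}=\{Cr_0,\ldots,Cr_N\}$ of critical formulas appearing in a Skolemization of the proof.

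The main work is to produce a derivation of $\emptyset$ using only $Fr$ and $H$ inferences, since any branch of such a derivation traces out a sequence of $H$-steps that (by well-foundedness of the derivation) must terminate. I would proceed by cascading the cut-elimination results in the order they were developed. First, Theorem~\ref{thm:initial_deriv} provides a derivation $d_0$ of $\emptyset$ whose cuts have rank $\le\Omega+\rho(\emptyset,\mathrm{Cr})$. An empty path witnesses the hypothesis of Theorem~\ref{thm:omega_reduction} with $r=\Omega$, so repeated applications of Theorem~\ref{thm:pred_reduction} (and Lemma~\ref{thm:pred_prune}) reduce $d_0$ to an $\Omega+1$-derivation $d_1$ of $\emptyset$. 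Lemma~\ref{thm:omega_star_red} applied to $d_1$, with the empty path and no side premises, converts the $Cut^{\Omega,\mathrm{term}}$ inferences into $CutFr^{\Omega,\mathrm{term}}$ inferences, yielding an $\Omega^{*}$-derivation $d_2$ of $\emptyset$.

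Next, Theorem~\ref{thm:impred_reduction} is applied to $d_2$ with $\Theta=\Lambda=\emptyset$, the empty $\Omega^{*}$-path, the empty index set $R$, and no formulas $e$ with $(e,?)\in\Theta$ (so the ``extra data'' clause is vacuous). All hypotheses are trivially satisfied, and the conclusion is an $\Omega$-derivation $d_3$ of $\emptyset$. Finally, another round of predicative cut-elimination via Theorem~\ref{thm:pred_reduction} and Lemma~\ref{thm:pred_prune}, iterated down through each rank $<\Omega$, produces a $0$-derivation $d_\infty$ of $\emptyset$ in which the only inferences are $Fr$ and $H$.

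To conclude, I would trace an infinite branch through $d_\infty$ starting at $\emptyset=(S_0,P_0,V_0,F_0)$: at each sequent, either the branch ends (contradicting descent in a well-founded tree) or we move to the premise of an $Fr$ or $H$ inference. The $Fr$ inferences merely add $(e,?,t)$ markers to $\dom(S)$ and do not change the underlying historical substitution in the sense relevant to the $H$-process, while each $H_{e,v}$ inference is exactly a step of the $H$-process on the historical substitution. Well-foundedness of $d_\infty$ therefore forces the $H$-process to reach a sequent which is not a candidate for $H$ or $Fr$; since every leaf must be an axiom, and the only axiom compatible with having no unfixed expressions and no applicable $H$-step is $AxS$, the $H$-process ends at a solving correct substitution $S$. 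Then $\overline{S}\vDash Cr_I$ for every $I$, and in particular the Skolemized conclusion is satisfied, giving the numerical witness $n$. The main obstacle I anticipate is bookkeeping at the interface between the $\Omega^{*}$-derivation and Theorem~\ref{thm:impred_reduction}: checking that the starting configuration $(d_2,\emptyset,\emptyset)$ satisfies all of the compatibility, permittedness, and extra-data hypotheses of that theorem (and similarly that the final branch-extraction respects the $P,V$ components of the historical substitution) rather than any new mathematical content.
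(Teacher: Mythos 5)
Your proposal is correct and follows essentially the same route as the paper: Theorem~\ref{thm:initial_deriv}, then predicative reduction to an $\Omega+1$-derivation, Lemma~\ref{thm:omega_star_red}, Theorem~\ref{thm:impred_reduction} with $\Lambda=R=\emptyset$, reduction to a $0$-derivation, and reading off the resulting line of $Fr$/$H$ inferences as a terminating $H$-process ending in $AxS$. The only slight imprecision is in the last reduction step: one cannot simply iterate Theorem~\ref{thm:pred_reduction} ``down through each rank $<\Omega$'' on an $\Omega$-derivation (there is no top rank to peel off); this is exactly what Theorem~\ref{thm:omega_reduction} is for, and it is what the paper invokes there.
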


Since this is a consistency result, one usually considers explicitly what system the proof takes place in.  The usual approach would be to describe a system of ordinal notations and add, to each of the steps in the cut-elimination proof, a calculation showing that that the steps increase the ordinal bounds on the proofs in a controlled way.  Since the ordinal notation systems for $ID_1$ are both well-known and complicated \cite{MR2457679,MR0036806,MR0329869}, and since assigning ordinal heights to the combinatorial steps described above is routine, we omit the calculation of ordinals from this already along paper.  The reader who is troubled by a theorem claiming a consistency proof may prefer to assume that all proofs in this paper are formalized in $\mathrm{RCA}_0+WO(\alpha)$, and therefore that the main theorem says
\begin{quote}
  The theory $\mathrm{RCA}_0+WO(\alpha)$ proves that whenever $ID_1\vdash\exists x\phi(x)$ where $\phi$ is quantifier-free and does not contain the symbol $I$, there is an $n$ such that $\phi(n)$ holds.
\end{quote}
Here $\mathrm{RCA}_0$ is a second order theory conservative over primitive recursive arithmetic \cite{simpson99} (the second order part is being used to formalize the infinitary proof system; this could also be done in primitive recursive arithmetic with extra coding) and $\alpha$ is a system of ordinal notations for a sufficiently large computable ordinal.  (This ordinal $\alpha$ should be the Howard-Bachmann ordinal; without doing the necessary calculations, we cannot claim to have given a new proof that this specific ordinal suffices, however.)

\begin{proof}
  Since $ID_1\vdash\exists x\phi(x)$, also $ID_1^\epsilon\vdash\exists x\phi(x)$ (interpreting this as the formula $\phi(c_{\exists x\,\phi(x,\vec y)}(\vec d),\vec d)$ where $\vec d$ is all closed terms in $\phi$.

  Then by Theorem \ref{thm:initial_deriv}, there is a derivation of $\emptyset$ consisting of $Cut$-inferences of rank $\leq\Omega+r$ for some finite $r$.  In particular, this is an $\Omega+r$-derivation of $\emptyset$.  By $r-1$ applications of Theorem \ref{thm:pred_reduction} followed by Lemma \ref{thm:pred_prune}, there is an $\Omega+1$-derivation of $\emptyset$.

By Lemma \ref{thm:omega_star_red}, there is an $\Omega^*$-derivation of $\emptyset$ with no unjustified terms.   We apply Theorem \ref{thm:impred_reduction} with $\Lambda=\emptyset$ and $R=\emptyset$ to obtain an $\Omega$-derivation of $\emptyset$.

Therefore by Theorem \ref{thm:omega_reduction}, there is a $0$-derivation of $\emptyset$.  But in a $0$-derivation, the only possible inferences are $Fr$ and $H$ inferences.  Therefore this derivation is a line, and the sequents are $\emptyset=\Theta_0,\Theta_1,\ldots,\Theta_k$ for some $k$, and $\Theta_k$ must be an axiom.  Each $\Theta_i$ must be correct since $Fr$ and $H$ inferences preserve correctness, so $\Theta_k$ is not an instance of $AxF$.  $\Theta_k$ cannot be an instance of $AxH$ since $(\Theta_k)_F$ must be constantly equal to $t$.  So $\Theta_k$ is an instance of $AxS$, and so is a solving $\epsilon$-substitution.

In particular, $\Theta_k\vDash\exists x\phi(x)$, and therefore $\phi(\Theta_k(c_{\exists x\,\phi(x,\vec y)}(\vec d)),\vec d)$ must be true.
\end{proof}

\bibliographystyle{plain}
\bibliography{../../Bibliographies/main}
\end{document}